\newtheorem{theorem}{Theorem}[section]
\newtheorem{lemma}{Lemma}[section]
\newtheorem{corollary}{Corollary}[section]
\numberwithin{equation}{section}
\def\II{{\mathbb I}}
\def\ZZ{{\mathbb Z}}
\def\NN{{\mathbb N}}
\def\RR{{\mathbb R}}
\def\GG{{\mathbb G}}
\def\TT{{\mathbb T}}
\def\IId{{\mathbb I}^d}
\def\RRd{{\mathbb R}^d}
\def\TTd{{\mathbb T}^d}
\def\GGd{{\mathbb G}^d}
\def\RRdp{{\mathbb R}^d_+}
\def\ZZdp{{\mathbb Z}^d_+}
\def\ZZdpe{{\mathbb Z}^d_+(e)}
\def\RRbp{{\bar{\mathbb R}}^3_+}
\def\Pp{{\mathcal P}}
\def\Hab{H^{\alpha, \beta}}
\def\Hg{H^\gamma}
\def\BO{B^{\Omega}_{p,\theta}}
\def\BOe{B^{\Omega}_{p,\theta}(e)}
\def\Ba{B^a_{p,\theta}}
\def\Bg{B^\alpha_{p,\theta}}
\def\Bgq{B^\gamma_{q,\tau}}
\def\Bab{B^{\alpha,\beta}_{p,\theta}}
\def\Ua{U^a_{p,\theta}}
\def\Uab{U^{\alpha,\beta}_{p,\theta}}
\def\Bpsi{B^{\{\psi\}}_{p,\theta}}
\title{\sffamily Sampling and cubature on sparse grids based on a B-spline quasi-interpolation 
\author{ 
Dinh D\~ung \\[5mm]
Vietnam National University, Hanoi, Information Technology Institute \\
144 Xuan Thuy, Cau Giay, Hanoi, Vietnam\\
{\ttfamily dinhzung@gmail.com}\\[4mm]}}
\date{\ttfamily  July 09, 2015 -  Version R13}
\begin{document}
\maketitle
\begin{abstract}
Let $X_n = \{x^j\}_{j=1}^n$ be a set of $n$ points in the $d$-cube $\IId:=[0,1]^d$, and 
$\Phi_n = \{\varphi_j\}_{j =1}^n$  a family of  $n$ functions on $\IId$. 
 We consider the approximate recovery of functions $f$ on ${\II}^d$ from the sampled values 
$f(x^1), ..., f(x^n)$, by the linear sampling algorithm 
$
L_n(X_n,\Phi_n,f) 
:= 
\sum_{j=1}^n f(x^j)\varphi_j.
$
The error of sampling recovery is measured in the norm of the space $L_q(\IId)$-norm or the energy quasi-norm of the isotropic Sobolev space $W^\gamma_q(\IId)$ for $1 < q < \infty$ and $\gamma > 0$. Functions $f$ to be recovered are from the unit ball in  Besov type spaces of an anisotropic smoothness, in particular, spaces $B^{\alpha,\beta}_{p,\theta}$ of a ``hybrid" of mixed smoothness $\alpha > 0$ and  isotropic smoothness 
$\beta \in \RR$, and  spaces $\Ba$ of a nonuniform  mixed smoothness $a \in \RRdp$. We constructed asymptotically optimal linear sampling algorithms $L_n(X_n^*,\Phi_n^*,\cdot)$ on special sparse grids $X_n^*$ and a family $\Phi_n^*$ of linear combinations of integer or half integer translated dilations of tensor products of B-splines. We computed the asymptotic order of the error of the optimal recovery. This construction is based on B-spline quasi-interpolation representations of functions in $B^{\alpha,\beta}_{p,\theta}$ and $\Ba$. As consequences we obtained the asymptotic order of optimal cubature formulas for numerical integration of functions from the unit ball of these Besov type spaces.

\medskip
\noindent
{\bf Keywords and Phrases:} Linear sampling algorithms; Optimal sampling recovery; Cubature formulas; Optimal cubature; Sparse grids; Besov type spaces of anisotropic smoothness; B-spline quasi-interpolation representations.

\medskip
\noindent
{\bf Mathematics Subject Classifications (2010):} \ 41A15; 41A05; 41A25; 41A58; 41A63.
 
\medskip
\noindent
{\bf Communicated by Albert Cohen}  
\end{abstract}

\section{Introduction} 
The aim of the present paper is to construct  linear sampling algorithms and cubature formulas on sparse grids based on a B-spline quasi-interpolation, and study their optimality in the sense of asymptotic order for functions on the unit $d$-cube ${\II}^d:= [0,1]^d$, having an anisotropic smoothness. The error of sampling recovery is measured in the norm of the space $L_q(\IId)$-norm or the  energy norm of the isotropic Sobolev space $W^\gamma_q(\IId)$ for $1 < q < \infty$ and $\gamma > 0$. For convenience, we use somewhere the convention $W^0_q(\IId):= L_q(\IId)$.  

Let $X_n = \{x^j\}_{j=1}^n$ be a set of $n$ points 
in ${\II}^d$, $\Phi_n = \{\varphi_j\}_{j =1}^n$ a family of  
$n$ functions on ${\II}^d$. If  $f$ is a function on ${\II}^d$, for approximately recovering $f$ from the sampled values $f(x^1),..., f(x^n)$, we  define the linear sampling algorithm $L_n(X_n,\Phi_n,\cdot)$  by 
\begin{equation} \label{def[L_n]}
L_n(X_n,\Phi_n,f) 
:= \ \sum_{j=1}^n f(x^j)\varphi_j.
\end{equation}
Let $B$ be a quasi-normed space of functions on $\IId$, equipped with the quasi-norm 
$\|\cdot\|_B$. For $f \in B$,  we measure the recovery error by 
$\|f - L_n(X_n,\Phi_n,f)\|_B$. Let $W \subset B$. To study  optimality of linear sampling algorithms 
of the form \eqref{def[L_n]} for 
recovering $f \in W$ from $n$ of their values, we will use the quantity of optimal sampling recovery
\begin{equation} \nonumber 
r_n(W,B) 
\ := \ \inf_{X_n, \Phi_n} \  \sup_{f \in W} \, \|f - L_n(X_n,\Phi_n,f)\|_B. 
\end{equation}

Further, let $\Lambda_n = \{\lambda_j\}_{j =1}^n$ be a sequence of $n$ numbers. For  a $f \in C(\IId)$, we  want to approximately compute the integral 
\begin{equation*}
I(f)
:= \
\int_{\IId} f(x) \ dx 
\end{equation*}
by the cubature formula
\begin{equation*}
I_n(X_n,\Lambda_n,f)
:= \
\sum_{j=1}^n \lambda_j f(x^j).
\end{equation*}
To study the optimality of cubature formulas for 
$f \in W$, we use the quantity of optimal cubature
\begin{equation*} 
i_n(W) 
\ := \ \inf_{X_n, \Lambda_n} \  \sup_{f \in W} \, |I(f) - I_n(X_n,\Lambda_n,f)|. 
\end{equation*}

Recently, there has been increasing interest in solving approximation and numerical problems that involve
functions depending on a large number $d$ of variables.  Without further assumptions the computation time typically grows exponentially in $d$, and
the problems become intractable already for mild dimensions $d$. 
This is the so called curse of dimensionality \cite{Be57}. 
In sampling recovery and numerical integration, a classical model in attempt to overcome it which has been widely studied,  is to impose certain mixed smoothness or more general anisotropic smoothness conditions on the function to be approximated, and to employ sparse grids for construction of approximation algorithms for sampling recovery or integration. 
We refer the reader to  \cite{BG04, GeGr08, NW08, NW10} for surveys and the references therein on various aspects of this direction. 
 
Sparse grids for sampling recovery and numerical integration were first considered by Smolyak \cite{Sm63}. He constructed the following grid of dyadic points 
\begin{equation*} 
\Gamma(m)
:= \ 
\{ 2^{-k}s: k \in D(m),\ s \in I^d(k)\},
\end{equation*}
where
$D(m) := \{k \in {\ZZ}^d_+: |k|_1 \le m\}$ and  
$I^d(k):= \{s \in {\ZZ}^d_+: 0 \le s_i \le 2^{k_i}, \ i \in [d]\}$. 
Here and in what follows, we use the notations:
$xy := (x_1y_1,..., x_dy_d)$; 
$2^x := (2^{x_1},...,2^{x_d})$;
$|x|_1 := \sum_{i=1}^d |x_i|$ for $x, y \in {\RR}^d$;
$[d]$ denotes the set of all natural numbers from $1$ to $d$; $x_i$ denotes the $i$th coordinate 
of $x \in \RR^d$, i.e., $x := (x_1,..., x_d)$. Observe that $\Gamma(m)$ is a sparse grid
of the size $2^m m^{d-1}$ in comparing with the standard full grid of the size $2^{dm}$.

In approximation theory, Temlyakov \cite{Te85} -- \cite{Te93} and the author of the present paper 
\cite{Di90} -- \cite{Di92} developed Smolyak's construction for studying the asymptotic order of $r_n(W, L_q(\TTd))$ for periodic Sobolev classes $W^a_p$ and Nikol'skii classes $H^a_p$ having nonuniform mixed smoothness $a=(a_1,...,a_d) \in \RRd$ with different $a_j > 0$, where 
$\TTd$ denotes the $d$-dimensional torus. For the uniform mixed smoothness $\alpha{\bf 1}$, 
Temlyakov~\cite{Te93b} investigated sampling recovery for periodic Sobolev classes $W^{\alpha{\bf 1}}_p$ and Nikol'skii classes $H^{\alpha{\bf 1}}_p$, and recently, Sickel and Ullrich \cite{SU07} for periodic Besov classes 
$U^{\alpha{\bf 1}}_{p,\theta}$ , where ${\bf 1} := (1,1,...,1) \in \RRd$.
For non-periodic functions of mixed smoothness linear sampling algorithms have been recently studied by Triebel \cite{Tr10} $(d=2)$, D\~ung \cite{Di11}, Sickel and Ullrich \cite{SU11}, using the mixed tensor product of B-splines and Smolyak grids $\Gamma(m)$. 
Smolyak grids are a counterpart of  hyperbolic crosses which are frequency domains of trigonometric polynomials widely used for approximations of functions with a bounded mixed smoothness. These  hyperbolic cross trigonometric approximations are initiated by Babenko \cite{Ba60a}. For further surveys and references on the topic see \cite{Di86, Te86, Te93b, DU13}, and the more recent contributions \cite{SU07, U08}. 

In computational mathematics, the sparse grid approach was first considered by Zenger \cite{Ze91}. Numerical integration using sparse grids was investigated in \cite{GeGr98}. 
For non-periodic functions of mixed smoothness of integer order, linear sampling algorithms on sparse grids have been investigated by Bungartz and Griebel \cite{BG04} employing hierarchical Lagrangian polynomials multilevel basis  and measuring the approximation error in the $L_2$-norm and energy $H^1$-norm. 
There is a very large number of papers on sparse grids in various problems of approximations, sampling recovery and integration with applications in data mining, mathematical finance, learning theory, numerical solving of PDE and stochastic PDE, etc. to mention all of them.  The reader can see the surveys in 
\cite{BG04, GN09, GeGr08} and the references therein.  For recent further developments and results see in 
\cite{GHo10, GH13a, GH13b, GaHe09, BGGK13}.

Quasi-interpolation based on scaled B-splines with integer knots, possesses good local and approximation properties for smooth functions, see \cite[p. 63--65]{BHR}, \cite[p. 100-107]{C92}. It can be an efficient tool in some high-dimensional approximation problems, especially in applications ones. Thus, one of the important bases for sparse grid high-dimensional approximations having various applications, are the Faber functions (hat functions) which are piecewise linear B-splines of second order  
\cite{BG04,GeGr08, GHo10, GH13a, GH13b, GaHe09, BGGK13}.  The representation by Faber basis can be obtained by the B-spline quasi-interpolation (see, e. g., \cite{Di11}).                                              
In the recent paper \cite{Di11}, by using a quasi-interpolation representation of functions by mixed high-order
B-spline series, we constructed linear sampling algorithms $L_n(X_n,\Phi_n,f)$ on Smolyak grids $\Gamma(m)$, 
for functions on ${\II}^d$ from the non-periodic Besov class
$U^{\alpha{\bf 1}}_{p,\theta}$, which is defined as the unit ball of the Besov space 
$B^{\alpha{\bf 1}}_{p,\theta}$  of functions on $\IId$ having uniform mixed smoothness $\alpha$. For various 
$0<p,\theta, q \le \infty$ and $\alpha > 1/p$,
we proved upper bounds for the worst case error $\sup_{f \in U^{\alpha{\bf 1}}_{p,\theta}}\|f - L_n(X_n,\Phi_n,f)\|_q$ which  in some cases, coincide with 
the asymptotic order 
\begin{equation} \label{r_n[beta=0]}
r_n(U^{\alpha{\bf 1}}_{p,\theta}, L_q(\IId))
\ \asymp \
n^{-\alpha + (1/p - 1/q)_+} \log_2^{(d-1)b} n,
\end{equation}
where $b = b(\alpha, p, \theta, q) > 0$ and $x_+ := \max (0,x)$ for $x \in \RR$.

In the paper \cite{DU13}, we obtained the asymptotic order of optimal sampling recovery on Smolyak grids in the $L_q(\IId)$-quasi-norm of functions from $U^{\alpha{\bf 1}}_{p,\theta}$ for 
$0<p,\theta, q \le \infty$ and $\alpha > 1/p$. It is necessary to emphasize that any sampling algorithm on Smolyak grids always gives a lower bound of recovery error of the form as in the right side of \eqref{r_n[beta=0]} with the logarithm term $\log_2^{(d-1)b} n$, $b>0$. Unfortunately, in the case when the dimension $d$ is very large and the number $n$ of samples is rather mild, the main term becomes $\log_2^{(d-1)b} n$ which grows fast exponentially in $d$. To avoid this exponential grow we impose to functions other anisotropic smoothnesses and construct appropriate sparse grids for functions having them. Namely, we  extend the above study to functions on ${\II}^d$ from the  classes
$\Uab$ for $\alpha > 0, \beta \in \RR$, and  $\Ua$ for $a \in \RRd$ with $a_1 < a_2 \le \cdots \le a_d$, which are defined as the unit ball of the Besov type  spaces $\Bab$ and   $\Ba$, respectively. The space $\Bab$ and 
 $\Ba$ are certain sets of functions with bounded mixed modulus of smoothness. Both of them are  generalizations in different ways of the space $B^{\alpha {\bf 1}}_{p,\theta}$ of mixed smoothness $\alpha$. The space $\Bab$ is a ``hybrid" of the space $B^{\alpha {\bf 1}}_{p,\theta}$ and the classical isotropic Besov space $B^\beta_{p,\theta}$ of smoothness $\beta$. 

The space $\Bab$ is a Besov type generalization of the Sobolev type space 
$\Hab= B^{\alpha,\beta}_{2,2}$.  The latter space has been introduced in \cite{GN09} for solutions of the following elliptic variational problems 
$a(u,v)\ = \ (f,v) \ {\rm for \ all} \ v \in \Hg,$
where $f \in H^{-\gamma}$ and $a: \Hg \times \Hg\to \RR$ is a bilinear symmetric form satisfying the conditions 
$ a(u,v) \le \lambda \|u\|_{\Hg}\|v\|_{\Hg}$ and $a(u,u) \ge \mu \|u\|_{\Hg}^2$.
 By use of tensor-product biorthogonal wavelet bases, the authors of these papers constructed so-called optimized sparse grid  subspaces for finite element approximations of the solution having $\Hab$-regularity, whereas the approximation error is measured in the energy norm of isotropic Sobolev space $\Hg$. They generalized the construction of \cite{BG99} for a hyperbolic cross approximation of the solution of Poisson's equation to  elliptic variational problems.  
A generalization $\Hab\big((\RR^3)^N\big)$ of the space $\Hab$ of functions on $(\RR^3)^N$, based on isotropic Sobolev smoothness of the space $H^1(\RR^3)$, has been considered by Yserentant \cite{Ys07}--\cite{Ys11} for solutions 
$u:(\RR^3)^N \to \RR: ({\bf x}_1,...,{\bf x}_N) \mapsto u({\bf x}_1,...,{\bf x}_N)$ of the electronic Schr\"odinger equation $Hu = \lambda u$ for eigenvalue problem where $H$ is the Hamilton operator. 
He proved that the eigenfunctions are contained in the intersection of spaces
\[
H^{1,0}\big((\RR^3)^N\big) \cap \big[\cap_{\vartheta < 3/4} H^{\vartheta,1}\big((\RR^3)^N\big)\big].
\]
In numerical solving by hyperbolic cross approximations the error is measured in the norm of the space $L_2\big((\RR^3)^N\big)$ and the energy norm of the isotropic Sobolev space $H^1\big((\RR^3)^N\big)$.  
See also \cite{GH10a}--\cite{GH13b}, \cite{KY12} for further results and developments. 

All the above remarks and comments tell us about a motivation to construct efficient
linear sampling algorithms and cubature formulas on sparse grids based on a high-order B-spline quasi-interpolation, for functions having anisotropic smoothness from $\Bab$ and  $\Ba$, measuring the approximation error in the quasi-norm of $L_q(\IId)$ or the energy quasi-norm of $W^\gamma_q(\IId)$.
The optimality of these algorithms and formulas will be studied in terms of the quantities 
$r_n(\Uab, W^\gamma_q(\IId))$ and $i_n(\Uab)$ for the case $\beta \not= \gamma$, and $r_n(\Ua, L_q(\IId))$ and $i_n(\Ua)$ for the case of nonuniform mixed smoothness $a$ with 
$a_1 < a_2 \le \cdots \le a_d$. 

In the following, as an example, let us mention one of our main results. 
For a set $\Delta \subset \ZZdp$, we define the grid points in $\IId$
$G(\Delta) := \{ 2^{-k}s: k \in \Delta,\ s \in I^d(k)\}$,
and the linear sampling algorithms of the form 
\begin{equation} \label{eq[L_n(f)]}
L_n^\Delta(X_n,\Phi_n,f) 
\ = \ 
\sum_{k \in \Delta} \ \sum_{j \in I^d(k)} f(2^{-k}j) \psi_{k,j}, 
\end{equation} 
where $n:= |G(\Delta)|$, $X_n := G(\Delta)$, 
$\Phi_n:= \{ \psi_{k,j}\}_{k \in \Delta, \, j \in I^d(k)}$ and 
 $\psi_{k,j}$ are explicitly constructed as linear combinations of  
at most $N$ of B-splines $M^{(r)}_{k,s}$ for some $N$ independent of $k,j$ and $f$, $M^{(r)}_{k,s}$ are tensor products of either integer or half integer translated dilations of the centered B-spline of order $r$.

Let $ 0 < p, \theta, q \le \infty$, $\alpha, \gamma \in \RR_+$, $\beta \in \RR$ satisfying the conditions
$\min(\alpha,\alpha + \beta) > 1/p$ and $\alpha  > (\gamma -\beta)/d$ if $\beta > \gamma$, and 
$\alpha > \gamma - \beta$ if $ \beta < \gamma$ (with the additional restriction 
$1 < q < \infty$ in the case $\gamma > 0$).  Then  we explicitly constructed a set $\Delta_n$ such that $|G(\Delta_n)| \le n$ and
\begin{equation} \label{r_n[Uab,b<0]}
\sup_{f \in \Uab} \|f - L_n^{\Delta_n}(X_n,\Phi_n,f)\|_{W^\gamma_q(\IId)}
 \asymp  
r_n(\Uab, W^\gamma_q(\IId)) 
 \asymp  
\begin{cases}
n^{- \alpha  - (\beta-\gamma)/d + (1/p - 1/q)_+},  &  \beta > \gamma, \\
n^{- \alpha  - \beta + \gamma +(1/p - 1/q)_+},  &  \beta < \gamma;
\end{cases}
\end{equation}
From \eqref{r_n[Uab,b<0]} for the case $\gamma = 0$, $p=1$ we derived that
\begin{equation} \nonumber
i_n(\Uab) 
\ \asymp \
\begin{cases}
n^{- \alpha  - \beta/d + (1/p - 1)_+}, \ &  \beta > 0, \\
n^{- \alpha  - \beta +(1/p - 1)_+}, \ &  \beta < 0.
\end{cases}
\end{equation}
The set $\Delta_n$ is specially constructed for the class of $\Uab$, depending on the relationship between  $0<p,\theta, q, \tau \le \infty$ and $\alpha, \beta$ respectively. 
{\em The grids $G(\Delta_n)$ are sparse and have much smaller number of sample points
than the corresponding standard full grids and Smolyak grids, but give the same error of the sampling recovery on the both latter ones.} The construction of asymptotically optimal linear sampling algorithms 
$L_n^{\Delta_n}(X_n,\Phi_n,\cdot)$ is essentially  
based on quasi-interpolation representations by B-spline series of functions $f \in \Bab$ with a discrete equivalent quasi-norm in terms of the coefficient function-valued functionals of this series. Moreover, for the sampling recovery in the $L_1$-norm,  $L_n^{\Delta_n}(X_n,\Phi_n,\cdot)$ generates an asymptotically optimal cubature formula (see Section~\ref{Numerical integration} for details). 

To discuss results on the class $\Ua$ we preliminarily notice the following. For the nonuniform mixed smoothness $a$ with $0 < a_1 = \cdots = a_\nu < a_{\nu + 1} \le \cdots \le a_d$, it is known that in many approximation problems asymptotic characteristics of corresponding function classes with smoothness $a$ the extra $\log n$ appears in the form $(\log n)^{(\nu-1)b}$ (see, for example, \cite{Te86, Di86} and references there). In the case $\nu = 1$, the extra $\log n$ disappears independently of $b$. This makes the problem of finding the optimal rate in the case $\nu = 1$ much easier than in the case $\nu > 1$. Thus, it was proven in 
\cite{Te85} that for $1 \le p \le \infty$, $r > 1/p$,
\begin{equation} \label{r_n[Ua]<(periodic)}
r_n(U^a_{p, \infty}(\TT^d), L_q(\TTd)) 
\ \ll \
n^{- a_1}(\log n)^{(\nu-1)(a_1 + 1)}.
\end{equation}
Combining this with the well-known embedding 
$B^a_{p, \theta} \hookrightarrow  B^{a-(1/p-1/q)_+}_{q,\infty}$
and well-known lower bounds in the univariate case, we obtain for the case $\nu = 1$,
\begin{equation} \nonumber
r_n(\Ua(\TT^d), L_q(\TTd)) 
\ \asymp \
n^{- a_1 + (1/p - 1/q)_+} \ (0 < \theta \le \infty).
\end{equation}
It is important to emphasize that linear sampling algorithms  constructed in \cite{Te85} which give the upper bounds \eqref{r_n[Ua]<(periodic)} and which are asymptotically optimal for the case $\nu = 1$, are developed from a construction in \cite{Sm63}, but essentially based on extended nonuniform Smolyak grids.  These grids are a counterpart of extended hyperbolic crosses suggested by Teljakovskii \cite{Tel64} (for further development of Teljakovskii's construction in hyperbolic cross approximation of functions having one or several nonuniform mixed smoothnesses, see \cite{Te86, Di86} for surveys and references there).  Extended nonuniform Smolyak grids and their modifications then were used in sampling recovery problems in \cite{Te85} -- \cite{Te93}, \cite{Di90} -- \cite{Di92}, \cite{GH13a} -- \cite{GH13b}.

In the present paper, we are interested in constructing asymptotically optimal linear sampling algorithms for the nonperiodic Besov class $\Ua$ with nonuniform mixed smoothness $a$.  More precisely, if $ 0 < p, \theta, q \le \infty$ and $a \in \RRd$ with $1/p < a_1 < a_2 \le ... \le a_d$, we explicitly constructed a set $\Delta'_n$ such that $|G(\Delta'_n)| \le n$ and the sampling algorithm $L_n^{\Delta'_n}(X_n,\Phi_n,f)$ is  asymptotically  optimal for the class $\Ua$, i. e.,
\begin{equation} \label{r_n[Ua]><}
\sup_{f \in \Ua} \|f - L_n^{\Delta'_n}(X_n,\Phi_n,f)\|_q
 \asymp 
r_n(\Ua, L_q(\IId)) 
\ \asymp \
n^{- a_1 + (1/p - 1/q)_+}.
\end{equation}
 The construction of the sampling algorithms $L_n^{\Delta'_n}(X_n,\Phi_n,f)$ on the grids $G(\Delta'_n)$ is similar to that in \cite{Te85} for the case $\nu = 1$. The main contribution of the present  paper is  a theorem on quasi-interpolation representation by B-spline series of functions $f \in \Ba$ with a discrete equivalent quasi-norm in terms of the coefficient function-valued functionals of this series. This theorem plays a key role in constructing the asymptotically optimal linear sampling algorithms $L_n^{\Delta'_n}(X_n,\Phi_n,f)$ for the class $\Ua$, as well in proving the relation \eqref{r_n[Ua]><}.

In the present paper, we consider only two kinds of anisotropic smoothness spaces $\Bab$ and $\Ba$. However, our constructions and methods of proofs of results can be extended to other kinds of anisotropic smoothness, see examples in Remark at the end of Section \ref{Quasi-interpolant}. We are restricted to compute the asymptotic order of $r_n$ 
with respect only to $n$ when 
$n \to \infty$, not analyzing  the dependence on the number of variables $d$. Recently, in \cite{DU12} Kolmogorov $n$-widths $d_n(U,\Hg)$ and $\varepsilon$-dimensions $n_\varepsilon(U,\Hg)$ in space $\Hg$ of periodic multivariate function classes $U$ have been investigated in high-dimensional settings, where $U$ is the unit ball in $\Hab$ or its subsets. We computed the accurate dependence of $d_n(U,\Hg)$ and  $n_\varepsilon(U,\Hg)$ as a function of two variables $n$, $d$ or $\varepsilon$, $d$.  Although $n$ is the main parameter in the study of convergence rate with respect to $n$ when $n \to \infty$, the parameter $d$ may affect this rate when $d$ is large. It is interesting and important to investigate  optimal sampling recovery  and cubature in such high-dimensional settings. In the recent paper \cite{Di15}, we have constructed linear algorithms of sampling recovery and cubature formulas on Smolyak grids of periodic $d$-variate functions having Lipschitz-H\"older mixed smoothness based on B-spline quasi-interpolation, and  established upper and lower estimates of the error of the optimal sampling recovery and the optimal integration on Smolyak grids, explicit in $d$ and $n$ when the number $d$ of variables and the number $n$ of sampled function values may be very large. 

The present paper is organized as follows. 
In Section \ref{Quasi-interpolant}, we give  definitions of Besov type spaces $\BO$ of functions with bounded mixed 
modulus of smoothness, in particular, spaces $\Bab$ and  $\Ba$,  and prove  theorems on 
quasi-interpolation representation by B-spline series,
with relevant discrete equivalent quasi-norms.
In Section \ref{Sampling recovery}, we construct linear sampling algorithms on sparse grids of the form 
\eqref{eq[L_n(f)]} for function classes $\Uab$ and $\Ua$, and prove upper bounds for the error of recovery by these algorithms. In Section \ref{Optimality}, we prove the sparsity and asymptotic optimality of the linear sampling algorithms constructed in Section \ref{Sampling recovery}, for the quantities  
$r_n(\Uab, L_q(\IId))$ and $r_n(\Ua, L_q(\IId))$, and establish their asymptotic orders. In Section \ref{Sampling recovery in Bg}, we extend the investigations of Sections \ref{Sampling recovery} and \ref{Optimality} to the quantities $r_n(\Uab, W^\gamma_q(\IId))$ for $\gamma > 0$. 
 In Section \ref{Numerical integration}, we discuss the problem of optimal cubature formulas for numerical integration  in terms of $i_n(\Uab)$ and $i_n(\Ua)$.

\section{Function spaces and quasi-interpolation representations} 
\label{Quasi-interpolant}

\subsection{Function spaces} 
Let us first introduce 
spaces $\BO$ of functions with bounded mixed modulus of smoothness and Besov type spaces $\Bab$ and   $\Ba$ of functions with anisotropic smoothness, as well fractional isotropic Sobolev and Besov spaces
 $W^\alpha_p$ and  $B^\alpha_{p,\theta}$, and give necessary knowledge of them.
 
Let $\GG$ be a domain in $\RR$. 
For univariate functions $f$ on $\GG$ the $r$th difference operator $\Delta_h^r$ is defined by 
\begin{equation*}
\Delta_h^r(f,x) := \
\sum_{j =0}^r (-1)^{r - j} \binom{r}{j} f(x + jh).
\end{equation*}
If $e$ is any subset of $[d]$, for multivariate functions on $\GGd$
the mixed $(r,e)$th difference operator $\Delta_h^{r,e}$ is defined by 
\begin{equation*}
\Delta_h^{r,e} := \
\prod_{i \in e} \Delta_{h_i}^r, \quad \Delta_h^{r,\varnothing} := \ I,
\end{equation*}
where the univariate operator
$\Delta_{h_i}^r$ is applied to the univariate function $f$ by considering $f$ as a 
function of  variable $x_i$ with the other variables held fixed, and $I(f):= f$ for functions $f$ on $\GGd$. 

Denote by  
$L_p(\GGd)$ the quasi-normed space 
of functions on $\GGd$ with the $p$th integral quasi-norm 
$\|\cdot\|_{p,\GGd}$ for $0 < p < \infty,$ and 
the sup norm $\|\cdot\|_{\infty,\GGd}$ for $p = \infty$. 

Let
\begin{equation} \nonumber
\omega_r^e(f,t)_{p,\GGd}:= \sup_{|h_i| \le t_i, i \in e}\|\Delta_h^{r,e}(f)\|_{p,\GGd(r,h,e)}, \ t \in \RRdp,
\end{equation} 
be the mixed $(r,e)$th modulus of smoothness of $f$,
 where 
\[
\GGd(r,h,e):= \{ x \in \GGd : x_i, x_i + rh_i \in \GG, \ i \in e \}
\]
(in particular,  $\omega_r^{\varnothing}(f,t)_{p,\GGd} = \|f\|_{p,\GGd}$).

For $x,x' \in \RRd$, the inequality $x' \le x$ ($x' < x$) 
means $x'_i \le x_i \ (x'_i < x_i), \ i \in [d]$. Denote: $\RR_+ := \{x \in \RR: x \ge 0\}$.
Let $\Omega: \RRdp \to \RR_+$ be a function satisfying conditions
\begin{equation} \label{conditionI}
\Omega(t) 
\ > \ 0, \ \ t > 0, \ t \in \RRdp, 
\end{equation}
\begin{equation} \label{conditionII}
\Omega(t) 
\ \le \ 
C \Omega(t') , \ \ t \le t', \ t, t' \in \RRdp, 
\end{equation} 
and for a fixed $\gamma \in \RRdp$, $\gamma \ge {\bf 1}$, there is a constant $C' = C'(\gamma)$ such that for every
$ \lambda \in \RRdp$ with $\lambda \le \gamma$,
\begin{equation} \label{conditionIII}
\Omega(\lambda \,t) 
\ \le \ 
C' \Omega(t) , \ \   t \in \RRdp. 
 \end{equation}

For $e \subset [d]$, we define the function $\Omega_e: \RRdp \to \RR_+$ by
$\Omega_e(t)
:= \
\Omega(t^e), 
$ 
where $t^e \in \RRdp$ is given by $t^e_j = t_j$ if $j \in e$, and $t^e_j = 1$ otherwise.

If $0 <  p, \theta \le \infty$, 
we introduce the quasi-semi-norm 
$|f|_{\BOe}$ for functions $f \in L_p(\GGd)$ by
\begin{equation} \nonumber
|f|_{\BOe}:= \
\left\{
\begin{array}{cl}
\displaystyle
 \ \Big(\int_{{\II}^d} \{
\omega_r^e(f,t)_{p,\GGd}/\Omega_e(t) \}^ \theta \prod_{i \in e} t_i^{-1}dt \Big)^{1/\theta}, 
& \theta < \infty, \\[2.5ex]
\displaystyle
 \sup_{t \in \IId} \ \omega_r^e(f,t)_{p,\GGd}/\Omega_e(t),  & \theta = \infty,
\end{array} \right.
\end{equation}
(in particular, $|f|_{\BO(\varnothing)} = \|f\|_{p,\GGd}$).
For $0 <  p, \theta \le \infty$, the Besov type space 
$\BO(\GGd)$ is defined as the set of  functions $f \in L_p(\GGd)$ 
for which the  quasi-norm 
\begin{equation} \nonumber 
\|f\|_{\BO(\GGd)}
:= \ 
 \sum_{e \subset [d]} |f|_{\BOe}
\end{equation}
is finite. In what follows, we assume that the function $\Omega$ satisfies  the conditions \eqref{conditionI}--\eqref{conditionIII}.

We use the notations:
$A_n(f) \ll B_n(f)$ if $A_n(f) \le CB_n(f)$ with 
$C$ an absolute constant not depending on $n$ and/or $f \in W,$ and 
$A_n(f) \asymp B_n(f)$ if $A_n(f) \ll B_n(f)$ and $B_n(f) \ll A_n(f).$ Put  $\ZZ_+:= \{s \in \ZZ: s \ge 0 \}$ and 
$\ZZdp(e):= \{s \in \ZZdp: s_i = 0 , \ i \notin e\}$  for a set $e \subset [d]$.
\begin{lemma} \label{normequivalence[B_1]}
Let $\ 0 < p, \theta \le \infty$. Then we have the following quasi-norm equivalence
\begin{equation*} 
\|f\|_{\BO(\GGd)}
\ \asymp \ B_1(f)
\ := \
\sum_{e \subset [d]}\, \biggl(\sum_{k \in \ZZdpe} 
 \left\{\omega_r^e(f,2^{-k})_{p,\GGd}/\Omega(2^{-k})\right\}^\theta \biggl)^{1/\theta} 
\end{equation*}
with the corresponding change to sup when $\theta = \infty$.
\end{lemma}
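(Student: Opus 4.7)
The plan is to establish, for each $e \subset [d]$, the equivalence
\[
|f|_{\BOe}\ \asymp\ S_e(f) := \biggl(\sum_{k \in \ZZdpe} \{\omega_r^e(f,2^{-k})_p/\Omega(2^{-k})\}^\theta \biggr)^{1/\theta}
\]
(with the obvious sup interpretation when $\theta=\infty$), and then to sum over $e$ to obtain the claimed quasi-norm equivalence $\|f\|_\BO\asymp B_1(f)$. The three tools I will use throughout are: (i) monotonicity of $\omega_r^e(f,\cdot)_p$ in each $t_i$ with $i \in e$; (ii) the regularity of $\Omega$ from \eqref{conditionII}--\eqref{conditionIII}, in particular the doubling-type estimate $\Omega(2^{-k})\asymp \Omega(2^{-k-\chi_e})$ where $\chi_e$ is the indicator vector of $e$; and (iii) the pointwise inequality $\omega_r^e(f,t)_p \le C_{r,p}^{|e\setminus e'|}\omega_r^{e'}(f,t)_p$ for $e' \subseteq e$, which follows from each $\Delta^r_{h_i}$ being bounded on $L_p$.

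The first step is to note that the integrand in the definition of $|f|_{\BOe}$ depends on $t$ only through $(t_i)_{i\in e}$, since both $\omega_r^e(f,t)_p$ and $\Omega_e(t)=\Omega(t^e)$ ignore $t_j$ for $j\notin e$; integrating out those coordinates reduces the problem to an $|e|$-dimensional weighted integral over $[0,1]^e$, which I decompose dyadically as $\bigcup_{k\in\ZZdpe}Q_k^e$ with $Q_k^e := \prod_{i\in e}[2^{-k_i-1},2^{-k_i})$. On each cell, monotonicity gives $\omega_r^e(f,2^{-k-\chi_e})_p \le \omega_r^e(f,t)_p \le \omega_r^e(f,2^{-k})_p$, conditions \eqref{conditionII}--\eqref{conditionIII} (applied with $\lambda=2\chi_e+{\bf 1}_{[d]\setminus e}$) give $\Omega(t^e)\asymp\Omega(2^{-k})$ uniformly, and the weight integrates to the fixed constant $\int_{Q_k^e}\prod_{i\in e}t_i^{-1}\,dt=(\log 2)^{|e|}$. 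Hence the contribution of $Q_k^e$ to $|f|_{\BOe}^\theta$ lies between universal constant multiples of $\{\omega_r^e(f,2^{-k-\chi_e})_p/\Omega(2^{-k})\}^\theta$ and $\{\omega_r^e(f,2^{-k})_p/\Omega(2^{-k})\}^\theta$.

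Summing the upper bound in $k$ yields $|f|_{\BOe}\lesssim S_e(f)$, and hence $\|f\|_\BO\lesssim B_1(f)$ after summation in $e$. For the reverse inequality, summing the lower bound and applying the index shift $k\mapsto k+\chi_e$ (absorbing the factor $\Omega(2^{-k})/\Omega(2^{-k-\chi_e})\asymp 1$ via tool (ii)) gives $|f|_{\BOe}\gtrsim T_e(f)$, where $T_e(f)$ is the ``interior'' sum restricted to $k\in\ZZdpe$ with $k_i\ge 1$ for every $i\in e$. The ``boundary'' terms of $S_e(f)$ -- those $k\in\ZZdpe$ for which $k_i=0$ for some $i\in e$ -- lie in the interior of $\ZZdp(e')$ for some $e'\subsetneq e$, and tool (iii) reduces them to a constant multiple of the corresponding terms of $T_{e'}(f)\lesssim |f|_{\BO(e')}$. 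Combining with $T_e(f)\lesssim |f|_{\BOe}$ and summing over $e\subset[d]$, each $|f|_{\BO(e')}$ appears only finitely many times, and we obtain $B_1(f)\lesssim\|f\|_\BO$.

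The main obstacle is precisely the boundary bookkeeping in the lower bound: the index shift used to dominate $T_e(f)$ by $|f|_{\BOe}^\theta$ loses a one-step boundary layer, which is then exactly absorbed by the lower-dimensional seminorms $|f|_{\BO(e')}$, $e'\subsetneq e$, already present on the left-hand side. The doubling regularity of $\Omega$ and the inequality (iii) above are what make this compensation quantitative and independent of $f$. The case $\theta=\infty$ runs in parallel, with suprema over $Q_k^e$ and over $k\in\ZZdpe$ replacing the $\ell^\theta$ sums, and the boundary argument going through unchanged.
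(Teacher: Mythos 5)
Your argument is correct and is exactly the standard dyadic-discretization proof that the paper's one-line justification (``properties of the mixed modulus of smoothness and the properties \eqref{conditionI}--\eqref{conditionIII} of $\Omega$'') alludes to: cell-wise comparison using monotonicity of $\omega_r^e(f,\cdot)_p$, the regularity of $\Omega$, and absorption of the boundary layer $\{k:k_i=0 \text{ for some } i\in e\}$ into the lower-dimensional seminorms via $\omega_r^e(f,t)_p\ll\omega_r^{e'}(f,t)_p$ for $e'\subset e$. The only point worth flagging is that your doubling estimate $\Omega(2^{-k})\asymp\Omega(2^{-k-\chi_e})$ invokes \eqref{conditionIII} with $\lambda_i=2$, which needs $\gamma_i>1$ (and an iteration combined with \eqref{conditionII} if $1<\gamma_i<2$); the paper's standing hypothesis $\gamma\ge{\bf 1}$ does not literally guarantee this, but every $\Omega$ actually used in the paper satisfies it, so this is an imprecision in the stated hypotheses rather than a gap in your proof.
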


\begin{proof}
This lemma follows from properties of mixed modulus of smoothness $\omega_r^e(f,t)_{p,\GGd}$ and the properties 
\eqref{conditionI}--\eqref{conditionIII} of the function $\Omega$. 
We prove it for completeness. The lemma will be proven if we show that for every $e \subset [d]$,
\begin{equation} \label{|f|_BOe><}
|f|_{\BOe}
\ \asymp \ 
 \biggl(\sum_{k \in \ZZdpe} 
  \left\{\omega_r^e(f,2^{-k})_p/\Omega(2^{-k})\right\}^\theta \biggl)^{1/\theta}, 
\end{equation}
with the corresponding change to sup when $\theta = \infty$. Let us prove this semi-norms equivalence for instance, for $e = [d]$, $1 \le p < \infty$ and $0 < \theta < \infty$. The general case can be proven in a similar way with a slight modification. 
Put $D(k):= \{x \in \RRdp: k \le x < k + {\bf 1}\}$ and use the abbreviation
 $\omega_r(f, \cdot)_p:= \omega_r^{[d]}(f, \cdot)_p$. 
 By \eqref{conditionI}--\eqref{conditionIII} we have
\begin{equation} \label{asymp[Omega]}
\Omega(2^{-x})
\ \asymp \
\Omega(2^{-k}), \ \ x \in D(k), \ k \in \ZZdp.
\end{equation}
From the monotonicity of $\omega_r(f, \cdot)$ in each variable and the inequality
\begin{equation*} 
\omega_r(f, c\,t)_p 
\ \le \ 
\prod_{j=1}^d(1 + c_j)^r \, \omega_r(f, t)_p, \ \ c \in \RRdp, \ c > 0,
\end{equation*}
we obtain
\begin{equation} \label{asymp[omega_r]}
\omega_r(f,2^{-x})_p 
\ \asymp \ 
\omega_r(f, 2^{-k})_p,  \ \ x \in D(k), \ k \in \ZZdp.
\end{equation}
Setting $I(k) := \{t \in \IId:  2^{- k - {\bf 1}} \le t \le 2^{- k} \}$,  by 
\eqref{asymp[Omega]} and \eqref{asymp[omega_r]}  we have
\begin{equation*} 
\begin{aligned}
|f|_{\BO([d])}^\theta
\ &= \
 \ \sum_{k \in \ZZdp} \,\int_{I(k)} \{ \omega_r(f,t)_p/\Omega(t) \}^\theta \prod_{i \in [d]} t_i^{-1}dt \\ 
\ &= \
 \ \sum_{k \in \ZZdp} \,\int_{D(k)} \{ \omega_r(f,2^{-x})_p/\Omega(2^{-x}) \}^\theta dx   
\ \asymp \
 \ \sum_{k \in \ZZdp} \{ \omega_r(f, 2^{-k})_p/ \Omega(2^{-k}) \}^\theta. 
\end{aligned}
\end{equation*}
\end{proof}

Let us define the Besov type spaces $\Ba(\GGd)$ and $\Bab(\GGd)$ of functions with anisotropic smoothness as particular cases of $\BO(\GGd)$.
For $a \in \RRdp$, we define the space $\Ba(\GGd)$ of mixed smoothness $a$ by
\begin{equation} \label{def[Ba]}
\Ba(\GGd) \ := \BO(\GGd), \ \text{where $\Omega(t)= \prod_{i=1}^d t_i^{a_i}, \ t \in \RRdp$}.
\end{equation}
Let $\alpha \in \RR_+$ and $\beta \in \RR$ with $\alpha + \beta > 0$. We define the space $\Bab(\GGd)$
as follows. 
\begin{equation} \label{def[Bab]}
\Bab(\GGd) \ := \BO(\GGd), \quad \text{where} \quad
\Omega(t)= 
\left\{
\begin{array}{cl}
\displaystyle
\prod_{i=1}^d t_i^\alpha  \inf_{j \in [d]} t_j^\beta, & \ \beta \ge 0, \\[2ex]
\displaystyle
 \prod_{i=1}^d t_i^\alpha \sup_{j \in [d]} t_j^\beta, & \ \beta < 0.
\end{array} \right.
\end{equation}
The definition \eqref{def[Bab]} seems different for $\beta > 0$ and $\beta < 0$. However, it can be well interpreted in terms of the equivalent discrete quasi-norm $B_1(f)$ in Lemma \ref{normequivalence[B_1]}. Indeed, the function $\Omega$ in \eqref{def[Bab]} for both $\beta \ge 0$ and $\beta <0$ satisfies the assumptions 
\eqref{conditionI}--\eqref{conditionIII} and moreover,
\begin{equation} \nonumber
1/\Omega (2^{-x})
\ = \ 
2^{\alpha |x|_1 + \beta |x|_\infty},
\ \  x \in \RRdp, 
\end{equation}
where $|x|_\infty:= \max_{j \in [d]} |x_j|$ for $x \in \RRd$.  Hence, by Lemma \ref{normequivalence[B_1]}
 we have the following quasi-norm equivalence
\begin{equation} \label{discretenorm[Bab]}
\|f\|_{\Bab(\GGd)}
\ \asymp \
\sum_{e \subset [d]}\, \biggl(\sum_{k \in \ZZdpe} 
 \left\{2^{\alpha |k|_1 + \beta |k|_\infty} \omega_r^e(f,2^{-k})_{p,\GGd}\right\}^\theta \biggl)^{1/\theta} 
\end{equation}
with the corresponding change to sup when $\theta = \infty$. The notation $\Bab(\GGd)$ becomes explicitly reasonable if  we take the right side of \eqref{discretenorm[Bab]} as a definition of the quasi-norm of the space 
$\Bab(\GGd)$. Notice that  $B^{\alpha,0}_{p,\theta}(\GGd) = B^{{\alpha {\bf 1}}}_{p, \theta}(\GGd)$. However,  in general the space $B^{0,\beta}_{p,\theta}(\GGd)$ does not coincide with the classical isotropic Besov space $B^\beta_{p,\theta}(\GGd)$. This is a consequence of results in a forthcoming paper \cite{UV15}. We will need isotropic Besov spaces $B^\beta_{p,\theta}(\GGd)$ and introduce them separately below.

Let $\GGd$ be either $\IId$ or $\RRd$. We recall a notion of classical isotropic Besov space $\Bg(\GGd)$ and isotropic Sobolev space 
$W^\alpha_p(\GGd)$.  There are several different definitions with equivalent norms of these spaces. One can consult, for example, Chapters 4 and 5 in \cite{Ni75}, and Sections 2.3--2.5, 4.2, 4.4 in  \cite{Tr95}, where
the equivalence of these definitions are formulated in the form of a theorem on equivalence of corresponding different norms.  Below we will introduce one of them. We also refer the reader to the books \cite{BL76, Ni75, Tr95} for knowledge on these spaces. 

 Put for $r \in \NN$ and $h \in \RRd$
\[
\GGd(r,h)
:= \
\{x \in \GGd: \, x + rh \in \GGd\}.
\]
(Obviously, $\RRd(r,h) = \RRd$).
For functions $f$ on $\GGd$, the $r$th difference operator $\Delta_h^r$, $h \in \RRd$,  is defined by 
\begin{equation*}
\Delta_h^r(f,x) := \
\sum_{j =0}^r (-1)^{r - j} \binom{r}{j} f(x + jh), \ x \in \GGd(r,h),
\end{equation*}
and the $r$th modulus of smoothness $\omega_r(f,t)_{p,\GGd}$, $t \in \RR_+$, by
\begin{equation} \nonumber
\omega_r(f,t)_{p,\GGd}:= \sup_{|h| < t}\|\Delta_h^r(f)\|_{p,\GGd(r,h)}, 
\end{equation}
where $|h|:= \sqrt{|h_1|^2 + \cdots + |h_d|^2}$.
For $\alpha >0$,  $0 < p,\theta \le \infty$ and $r > \alpha$, we introduce the semi-norm 
$|f|_{\Bg(\GGd)}$ for functions $f \in L_p(\GGd)$ by
\begin{equation} \nonumber
|f|_{\Bg(\GGd)}:= \
\left\{
\begin{array}{cl}
\displaystyle
 \ \biggl(\int_{\II} \big\{
t^{-\alpha}\omega_r(f,t)_{p,\GGd} \big\}^ \theta t^{-1}dt \biggl)^{1/\theta}, 
& \theta < \infty, \\[2.5ex]
\displaystyle
 \sup_{t \in \II} \ t^{-\alpha}\omega_r(f,t)_{p,\GGd},  & \theta = \infty.
\end{array} \right.
\end{equation}
The isotropic Besov space $\Bg(\GGd)$ is defined as the set of all functions $f \in L_p(\GGd)$ for which the norm
\begin{equation} \nonumber
\|f\|_{\Bg(\GGd)}
:= \
\|f\|_{p,\GGd} + |f|_{\Bg(\GGd)}
\end{equation} 
is finite. Notice that the space $\Bg(\IId)$ can be seen as the quasi-normed space of restrictions of functions from $\Bg(\RRd)$ to $\IId$ equipped with the equivalent quasi-norm
\begin{equation}  \label{defB[restrictions]}
\|f\|_{\Bg(\IId)} 
:= \ 
\inf \left \{\|g\|_{\Bg(\RRd)}: g \in \Bg(\RRd), \ g|_{\IId} = f \right \}, 
\end{equation}
see Definition 4.2.1/1 and Theorem 4.4.2/2 in \cite{Tr95}.

Denote by $\mathcal{F}$ the Fourier transform in distributional sense for local integrable functions on $\RRd$. 
For $\alpha > 0$, $1 < p < \infty$, the Sobolev space $W^\alpha_p(\RRd)$ is defined as
 \begin{equation} \nonumber
W^\alpha_p(\RRd) 
:= \ 
\left \{f \in  L_p(\RRd) : \mathcal{F}^{-1}\left (1+ |y|^2 \right )^{\frac{\alpha}{2}}\mathcal{F}f \in L_p(\RRd) \right \} 
\end{equation}
equipped with the norm
\begin{equation} \nonumber
\|f\|_{W^\alpha_p(\RRd)} 
:= \ 
\|\mathcal{F}^{-1}\left (1+ |y|^2 \right )^{\frac{\alpha}{2}}\mathcal{F}f\|_{p,\RRd}. 
\end{equation}
The isotropic Sobolev space $W^\alpha_p(\IId)$ is defined as the normed space of restrictions of functions from $W^\alpha_p(\RRd)$ to $\IId$ equipped with the norm
\begin{equation} \nonumber
\|f\|_{W^\alpha_p(\IId)} 
:= \ 
\inf \left \{\|g\|_{W^\alpha_p(\RRd)}: g \in W^\alpha_p(\RRd), \ g|_{\IId} = f \right \}. 
\end{equation}

We need some quasi-norm equivalences for spaces 
$B^\alpha_{p, \theta}(\GGd)$.
If $i \in [d]$ and $t \in \RR_+$, the  $(r,i)$th partial modulus of smoothness $\omega_{r,i}(f,t)_{p,\GGd}$ is defined for functions $f$ on $\GGd$ by 
\begin{equation} \nonumber
\omega_{r,i}(f,t)_{p,\GGd}:= \sup_{|h| < t}\|\Delta_{i,h}^r(f)\|_{p,\GGd(i,r,h)}, 
\end{equation}
where $\GGd(i,r,h):= \{ x \in \GGd : x_i + rh \in \GG\}$ and 
\begin{equation*}
\Delta_h^{r,i}(f,x) := \
\sum_{j =0}^r (-1)^{r - j} \binom{r}{j} f(x_1,...,x_{i-1},x_i + jh,x_{i+1},...,x_d).
\end{equation*}
For $r > \alpha$, we introduce the semi-norm 
$|f|_{\Bg(\GGd)}$ for functions $f \in L_p(\GGd)$ by
\begin{equation} \nonumber
|f|_{B^\alpha_{p,\theta}(\GGd)_i}:= \
\left\{
\begin{array}{cl}
\displaystyle
 \ \biggl(\int_{\II} \big\{
t^{-\alpha}\omega_{r,i}(f,t)_{p,\GGd} \big\}^ \theta t^{-1}dt \biggl)^{1/\theta}, 
& \theta < \infty, \\[2.5ex]
\displaystyle
 \sup_{t \in \II} \ t^{-\alpha}\omega_{r,i}(f,t)_{p,\GGd},  & \theta = \infty.
\end{array} \right.
\end{equation}

 Let $1 \le  p, \theta \le \infty$ and $\alpha > 0$, $\GGd$ be either $\IId$ or $\RRd$. Then there holds the norm equivalence
\begin{equation} \label{NormB^beta}
\|f\|_{B^\alpha_{p, \theta}(\GGd)}
\ \asymp \ 
\|f\|_{p,\GG^d} +  
\sum_{i=1}^d |f|_{B^\alpha_{p,\theta}(\GGd)_i},
 \ \forall f \in {B^\alpha_{p, \theta}(\GGd)}.
\end{equation}
Indeed, for $\GGd=\RRd$, the right hand side of \eqref{NormB^beta} defines a norm of the classical anisotropic Besov space ${\bf B}^{(\alpha,...,\alpha)}_{p,\theta}(\RRd)$, see, e.g., \cite[Section 4.3.4]{Ni75}.
On the other hand, ${\bf B}^{(\alpha,...,\alpha)}_{p,\theta}(\RRd)$ coincides with  
$B^\alpha_{p,\theta}(\RRd)$ in the sense of norm equivalence \cite[Section 5.6.2]{Ni75}.
This proves \eqref{NormB^beta} for the case $\GGd=\RRd$. The case $\GGd=\IId$ of \eqref{NormB^beta} can be derived from the case $\GGd=\RRd$ and \eqref{defB[restrictions]}.

From \eqref{NormB^beta} and \eqref{|f|_BOe><} follows the norm equivalence
\begin{equation} \label{NormB^beta(5)}
\|f\|_{B^\alpha_{p, \theta}(\GGd)}
\ \asymp \ 
\|f\|_{p,\GG^d} +  
\sum_{i=1}^d \biggl(\sum_{k \in \ZZ_+}\left\{2^{\alpha k} \omega_{r,i}(f,2^{-k})_{p,\GGd}\right\}^\theta 
\biggl)^{1/\theta}, \ \forall f \in {B^\alpha_{p, \theta}(\GGd)}. 
\end{equation}

Let $\alpha > 0$, $1 < p < \infty$. Then there hold true the following inequalities for the norm of $W^\gamma_p(\GGd)$
\begin{equation} \label{ineq[normsW,B]}
\|f\|_{B^\gamma_{p,\max(p,2)}(\GGd)}
\ \ll \
\|f\|_{W^\alpha_p(\GGd)}
\ \ll \ 
 \|f\|_{B^\alpha_{p,\min(p,2)}(\GGd)}.
\end{equation}
These inequalities are a reformulated form of the embeddings
\[
B^\alpha_{p,\min(p,2)}(\GGd) \hookrightarrow  W^\alpha_p(\GGd) 
\hookrightarrow B^\gamma_{p,\max(p,2)}(\GGd)
\] 
which are a particular case of Theorem 4.6.1(b) in \cite{Tr95}.

Since in the present paper we consider only functions defined on $\IId$, for simplicity we somewhere drop the symbol $\IId$ in  the above notations. 

\subsection{Quasi-interpolation representations and quasi-norm equivalences}
We introduce quasi-interpolation operators for functions on $\IId$. For a given natural number $r,$ let  $M$ be the centered B-spline of order $r$ with support $[-r/2,r/2]$ and 
knots at the points $-r/2,-r/2 + 1,...,r/2 - 1, r/2$. 
Let $\Lambda = \{\lambda(j)\}_{j \in P(\mu)}$ be a given finite even sequence, i.e., 
$\lambda(-j) = \lambda(j),$ where $P(\mu):= \{j \in  \ZZ: \ |j| \le \mu \}$ 
and $\mu \ge r/2 - 1$. 
We define the linear operator $Q$ for functions $f$ on $\RR$ by  
\begin{equation} \label{def:Q}
Q(f,x):= \ \sum_{s \in \ZZ} \Lambda (f,s)M(x-s), 
\end{equation} 
where
\begin{equation} \label{def:Lambda}
\Lambda (f,s):= \ \sum_{j \in P(\mu)} \lambda (j) f(s-j).
\end{equation}
The operator $Q$ is local and bounded in $C(\RR)$,
 where $C(G)$ denotes the normed space of bounded continuous functions on $G$ with sup-norm $\|\cdot\|_{C(G)}$.
Moreover,
$
\|Q(f)\|_{C(\RR)} \le \|\Lambda \|\|f\|_{C(\RR)}  
$
for each $f \in C(\RR),$ where $\|\Lambda \|= \ \sum_{j \in P(\mu)} |\lambda (j)|$. 
An operator $Q$ of the form \eqref{def:Q}--\eqref{def:Lambda} reproducing 
$\Pp_{r-1}$, is called a {\it quasi-interpolation operator in} $C(\RR).$ For details on quasi-interpolation, see, e.g., \cite{Di11} and references there. 

We give some examples of quasi-interpolation operators. 
A piecewise linear quasi-interpolation operator is defined as
\begin{equation} \nonumber
Q(f,x):= \ \sum_{s \in \ZZ} f(s) M(x-s), 
\end{equation} 
where $M$ is the   
symmetric piecewise linear B-spline with support $[-1,1]$ and 
knots at the integer points $-1, 0, 1$.
It is related to the classical Faber-Schauder basis of the hat functions 
(see, e.g., \cite{Di11, Tr10}, for details). 
 A quadric quasi-interpolation operator is defined by
\begin{equation*} 
Q(f,x):= \ \sum_{s \in \ZZ} \frac {1}{8} \{- f(s-1) + 10f(s) - f(s+1)\} M(x-s), 
\end{equation*} 
where $M$ is the symmetric quadric B-spline with support $[-3/2,3/2]$ and 
knots at the half integer points $-3/2, -1/2, 1/2, 3/2$.
Another example is the cubic quasi-interpolation operator 
\begin{equation*} 
Q(f,x):= \ \sum_{s \in \ZZ} \frac {1}{6} \{- f(s-1) + 8f(s) - f(s+1)\} M(x-s), 
\end{equation*} 
where $M$ is the symmetric cubic B-spline with support $[-2,2]$ and 
knots at the integer points $-2, -1, 0, 1, 2$.

If $Q$ is a quasi-interpolation operator of the form 
\eqref{def:Q}--\eqref{def:Lambda}, for $h > 0$ and a function $f$ on $\RR$, 
we define the operator $Q(\cdot;h)$ by
$
Q(f;h) 
:= \ 
\sigma_h \circ Q \circ \sigma_{1/h}(f),
$
where $\sigma_h(f,x) = \ f(x/h)$.
From the definition it is easy to see that 
\begin{equation*}
Q(f,x;h)= \ 
\sum_{k}\Lambda (f,k;h)M(h^{-1}x-k),
\end{equation*}
where 
$
\Lambda (f,k;h):= 
 \ \sum_{j \in P(\mu)} \lambda (j) f(h(k-j)).
$

The operator $Q(\cdot;h)$ gives a good approximation for smooth functions \cite[p. 63--65]{BHR}.
However, it is not defined for 
a function $f$ on $\II$, and therefore, not appropriate for an 
approximate sampling recovery of $f$ from its sampled values at points in $\II$. 
An approach to construct a quasi-interpolation operator for functions on $\II$ is to extend it 
by interpolation Lagrange polynomials. This approach has been proposed in 
\cite{Di09} for the univariate case. Let us recall it.

Denote by $k_0$ the smallest integer such that $r \le 2^{k_0}$. For a non-negative integer $k,$ 
we put $x_j = j2^{-k-k_0},  j \in \ZZ.$ If $f$ is a function on $\II,$ let  
$U_k(f)$ and $V_k(f)$ 
be the $(r - 1)$th Lagrange polynomials interpolating $f$
at the $r$ left end points $x_0, x_1,..., x_{r-1},$ and 
$r$ right end points 
$x_{2^k - r + 1}, x_{2^k - r + 3},..., x_{2^k},$ 
of the interval $\II,$ respectively. 
The function $\bar{f}_k$ is defined as an extension of 
$f$ on $\RR$ by the formula
\begin{equation*} 
\bar{f}_k (x):= \
\begin{cases}
U_k(f,x), \ & x < 0, \\
f(x), \ & 0 \le x \le 1, \\
V_k(f,x), \ & x >1.
\end{cases}
\end{equation*} 
If $f$ is continuous on $\II$, then $\bar{f}_k$ is a continuous function on $\RR$ too. 
Let $Q$ be a quasi-interpolation operator of the form \eqref{def:Q}--\eqref{def:Lambda} in $C({\RR}).$ 
If $k \in \ZZ_+ $, we introduce the operator $Q_k$  by  
\begin{equation*}
Q_k(f,x) := \ Q(\bar{f}_k,x;2^{-k-k_0}), \  x \in \II, 
\end{equation*}
for a function $f$ on $\II$. 

We define the integer translated dilation $M_{k,s}$ of $M$ by   
\begin{equation*}
M_{k,s}(x):= \ M(2^{k+k_0} x - s), \ k \in {\ZZ}_+, \ s \in \ZZ. 
\end{equation*}
Then we have for $k \in {\ZZ}_+$,
\begin{equation} \nonumber
Q_k(f,x)  \ = \ 
\sum_{s \in J(k)} a_{k,s}(f)M_{k,s}(x), \ \forall x \in \II, 
\end{equation}
where 
$
J(k) := \ \{s \in \ZZ:\ -r/2 < s <   2^{k+k_0} + r/2 \}
$ 
is the set of $s$ for which $M_{k,s}$ 
do not vanish identically on  $\II,$ and the coefficient functional $a_{k,s}$ is defined by
\begin{equation*} 
a_{k,s}(f):= \ \Lambda(\bar{f}_k,s;{2^{-k}}) 
= \   
\sum_{|j| \le \mu} \lambda (j) \bar{f}_k(2^{-k}(s-j)).
\end{equation*}

For $k \in \ZZdp$, let the mixed operator $Q_k$ be defined by
\begin{equation} \label{def:Mixed[Q_k]} 
Q_k:= \prod_{i=1}^d  Q_{k_i},
\end{equation}
where the univariate operator
$Q_{k_i}$ is applied to the univariate function $f$ by considering $f$ as a 
function of  variable $x_i$ with the other variables held fixed.

We define the $d$-variable B-spline $M_{k,s}$ by
\begin{equation} \label{def:Mixed[M_{k,s}]}
M_{k,s}(x):=  \ \prod_{i=1}^d M_{k_i, s_i}( x_i),  
\ k \in {\ZZ}^d_+, \ s \in {\ZZ}^d.
\end{equation}
Then we have
\begin{equation*} 
Q_k(f,x)  \ = \ 
\sum_{s \in J^d(k)} a_{k,s}(f)M_{k,s}(x), \quad \forall x \in {\II}^d, 
\end{equation*}
where $M_{k,s}$ is the mixed B-spline  defined in \eqref{def:Mixed[M_{k,s}]}, 
$
J^d(k) := \ \{s \in {\ZZ}^d:\ \ - r/2 < s_i < 2^{k_i + k_0} + r/2, \ i \in [d]\}
$ 
is the set of $s$ for which $M_{k,s}$ do not vanish identically on  ${\II}^d$, 
\begin{equation} \label{def:Mixed[a_{k,s}(f)]}
a_{k,s}(f) 
\ := \   
a_{k_1,s_1}(a_{k_2,s_2}(...a_{k_d,s_d}(f))),
\end{equation}
and the univariate coefficient functional
$a_{k_i,s_i}$ is applied to the univariate function $f$ by considering $f$ as a 
function of  variable $x_i$ with the other variables held fixed. 

The operator $Q_k$ is a local bounded linear 
mapping in $C({\II}^d)$ for $r\ge 2$ and in $L_\infty(\IId)$ for $r=1$, and reproducing $\Pp_{r-1}^d$ the space 
of polynomials of order at most $r - 1$ in each variable $x_i$. 
In particular, we have for every $f \in C(\IId)$, 
\begin{equation} \label{ineq:Boundedness}
\|Q_k(f)\|_\infty  \le C \|\Lambda \|^d \|f\|_{C({\II}^d)}. 
\end{equation}

For $k \in \ZZdp$, we write  $k  \to  \infty$ if $k_i  \to  \infty$ for $i \in [d]$).

\begin{lemma}
We have for every $f \in C(\IId)$,
\begin{equation} \label{ineq[|f - Q_k(f)|])}
\|f - Q_k(f)\|_\infty
\ \le \
C \sum_{e \in [d], \ e \not= \varnothing} \omega_r^e(f,2^{-k})_\infty,
\end{equation}
and, consequently,
\begin{equation} \label{ConvergenceMixedQ_k(f)}
\|f - Q_k(f)\|_\infty  \to 0 , \ k  \to  \infty.
\end{equation}
\end{lemma}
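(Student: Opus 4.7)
The plan is to reduce the multivariate bound to a univariate Whitney-type estimate by exploiting that $Q_k$ is a tensor product of univariate operators $Q_{k_i}$ acting on distinct coordinates, hence commuting pairwise. First I would expand
\begin{equation*}
I \ = \ \prod_{i=1}^d \bigl[(I - Q_{k_i}) + Q_{k_i}\bigr]
\ = \ \sum_{e \subset [d]} \prod_{i \in e}(I - Q_{k_i}) \prod_{i \notin e} Q_{k_i},
\end{equation*}
and isolate the term $e = \varnothing$, which is exactly $Q_k$, to obtain
\begin{equation*}
f - Q_k(f) \ = \ \sum_{\varnothing \neq e \subset [d]} \prod_{i \in e}(I - Q_{k_i}) \prod_{i \notin e} Q_{k_i}(f).
\end{equation*}

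Next, I would establish the univariate bound $\|g - Q_{k_i}(g)\|_{C(\II)} \le C\,\omega_r(g, 2^{-k_i})_\infty$ for $g \in C(\II)$. This rests on three ingredients already available: the local support of each $M_{k_i,s}$, so that $Q_{k_i}(g,x)$ depends only on values of $\bar g_{k_i}$ in a neighborhood of $x$ of size $O(2^{-k_i})$; the reproduction of $\Pp_{r-1}$ by $Q_{k_i}$; and the uniform boundedness of $Q_{k_i}$ from \eqref{ineq:Boundedness} (specialized to $d=1$). On each dyadic subinterval one subtracts the best polynomial approximation of $g$ of degree $r-1$ and invokes Whitney's theorem; near the endpoints one uses that the Lagrange pieces $U_{k_i}(g)$ and $V_{k_i}(g)$ reproduce $\Pp_{r-1}$, so the same local polynomial-approximation argument carries over with constants depending only on $r$.

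For the multivariate step I would iterate the univariate bound coordinatewise. Fix $\varnothing \neq e \subset [d]$ and set $g_e := \prod_{i \notin e} Q_{k_i}(f)$. Applying $(I - Q_{k_i})$ one variable at a time for $i \in e$ and using that these operators act in disjoint coordinates (so they commute with each other and with difference operators in the remaining variables), induction on $|e|$ yields
\begin{equation*}
\Bigl\|\prod_{i \in e}(I - Q_{k_i})(g_e)\Bigr\|_{\infty}
\ \le \ C^{|e|}\, \omega_r^e(g_e, 2^{-k})_{\infty}.
\end{equation*}
The mixed difference $\Delta_h^{r,e}$ commutes with each $Q_{k_j}$ for $j \notin e$, so $\Delta_h^{r,e} g_e = \prod_{j \notin e} Q_{k_j}\bigl(\Delta_h^{r,e} f\bigr)$, and applying \eqref{ineq:Boundedness} in the complementary $d-|e|$ coordinates gives $\omega_r^e(g_e, 2^{-k})_\infty \le C\,\|\Lambda\|^{d-|e|}\,\omega_r^e(f, 2^{-k})_\infty$. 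Summing the resulting $2^d - 1$ bounds yields \eqref{ineq[|f - Q_k(f)|])}, and then \eqref{ConvergenceMixedQ_k(f)} follows at once from uniform continuity of $f$ on the compact cube $\IId$, which forces $\omega_r^e(f,2^{-k})_\infty \to 0$ for every nonempty $e$ as $\min_{i \in e} k_i \to \infty$.

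I expect the main obstacle to be making the coordinate-by-coordinate iteration fully rigorous in the presence of the Lagrange extensions $\bar f_{k_i}$ that enter each $Q_{k_i}$: one must verify that the extension in variable $x_i$ commutes with $Q_{k_j}$ and with $\Delta^r_{h_j}$ in any other variable $x_j$. This is essentially formal once polynomial reproduction and sup-norm boundedness of the univariate operators are known, but it is the only place where the construction could fail and must be checked carefully.
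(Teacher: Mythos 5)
Your proof is correct and follows essentially the same route as the paper: reduce to the univariate Whitney-type bound $\|g-Q_{k_i}(g)\|_\infty \ll \omega_r(g,2^{-k_i})_\infty$ and combine the coordinates through an algebraic expansion of $I-Q_k$ over nonempty subsets $e\subset[d]$, each term controlled by $\omega_r^e(f,2^{-k})_\infty$. The only (harmless) difference is that you use the binomial expansion $I-Q_k=\sum_{e\neq\varnothing}\prod_{i\in e}(I-Q_{k_i})\prod_{i\notin e}Q_{k_i}$, which forces an extra appeal to the uniform boundedness \eqref{ineq:Boundedness} of the complementary operators, whereas the paper uses the inclusion--exclusion identity containing only $(I-Q_{k_i})$ factors.
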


\begin{proof}
For $d=1$, the inequality \eqref{ineq[|f - Q_k(f)|])} is of the form
\begin{equation} \label{ineq[|f - Q_k(f)|(d=1)])}
\|f - Q_k(f)\|_\infty
\ \le \
C \omega_r(f,2^{-k})_\infty.
\end{equation}
This inequality is derived from the inequalities (2.29)--(2.31) in \cite{Di11a} and the inequality 
\eqref{ineq:Boundedness}. For simplicity, let us prove the inequality \eqref{ineq[|f - Q_k(f)|])} for $d=2$ and 
$r \ge 2$. The general case can be proven in a similar way. Let $I$ be the identity operator and $k=(k_1,k_2)$. From 
the inequality \eqref{ineq[|f - Q_k(f)|(d=1)])} applied to $f$ as an univariate in each variable, we obtain 
\begin{equation} \nonumber
\begin{aligned}
\|f - Q_k(f)\|_\infty
\ &\le \
\|(I - Q_{k_1})(f)\|_\infty + \|(I - Q_{k_2})(f)\|_\infty + \|(I - Q_{k_1})(I - Q_{k_2})(f)\|_\infty\\
\ &\ll \
\omega_r^{\{1\}}(f,2^{-k})_\infty + \omega_r^{\{2\}}(f,2^{-k})_\infty + \omega_r^{[2]}(f,2^{-k})_\infty.
\end{aligned}
\end{equation}
\end{proof}

Further, we define 
the half integer translated dilation $M^*_{k,s}$ of $M$ by     
\begin{equation*}
M^*_{k,s}(x):= \ M(2^{k+k_0} x - s/2), \ k \in {\ZZ}_+, \ s \in \ZZ, 
\end{equation*}
and the $d$-variable B-spline $M^*_{k,s}$ by
\begin{equation*} 
M^*_{k,s}(x):= \ \ \prod_{i=1}^d M^*_{k_i, s_i}( x_i), 
\ k \in {\ZZ}^d_+, \ s \in {\ZZ}^d.
\end{equation*}
In what follows, the B-spline $M$ will be fixed. We will denote
$M^{(r)}_{k,s}:= M_{k,s}$ if the order $r$ of $M$ is even, 
and $M^{(r)}_{k,s}:= M^*_{k,s}$ if the order $r$ of $M$  is odd.
Let $J_r^d(k):= J^d(k)$ if $r$ is even, and 
\[
J_r^d(k):= \{s \in {\ZZ}^d: - r < s_i < 2^{k_i+k_0+1} + r, \ i \in [d] \}
\] 
if $r$ is odd. Notice that $J_r^d(k)$ is the set of $s$ for which 
$M^{(r)}_{k,s}$ do not vanish identically on  ${\II}^d$. 
Denote by $\Sigma_r^d(k)$ the span of the B-splines 
$M^{(r)}_{k,s}, \ s \in J_r^d(k)$.
If $0 < p \le \infty,$ 
for all $k \in {\ZZ}^d_+$ and all $g \in \Sigma_r^d(k)$ such that
\begin{equation} \label{def:StabIneq}
g = \sum_{s \in J_r^d(k)} a_s M^{(r)}_{k,s},
\end{equation}
there is the quasi-norm equivalence 
\begin{equation} \label{eq:StabIneq}
 \|g\|_p 
\ \asymp \ 2^{- |k|_1/p}\|\{a_s\}\|_{p,k},
\end{equation}
where
\begin{equation*}
\| \{a_s\} \|_{p,k}
 \ := \
\biggl(\sum_{s \in J_r^d(k)}| a_s |^p \biggl)^{1/p} 
\end{equation*}
with the corresponding change when $p= \infty$. 

For convenience we define the univariate operator $Q_{-1}$ by putting $Q_{-1}(f)=0$ for all $f$ on $\II$. Let  the operator $q_k$, $k \in {\ZZ}^d_+$, be defined 
in the manner of the definition \eqref{def:Mixed[Q_k]} by
\begin{equation} \label{eq:Def[q_k]}
q_k\ := \ \prod_{i=1}^d \left(Q_{k_i}- Q_{k_i-1}\right). 
\end{equation}
We have 
 \begin{equation} \label{eq:MixedQ_k(2)}
Q_k
\ = \ 
\sum_{k' \le k}q_{k'}. 
\end{equation}
From \eqref{eq:MixedQ_k(2)} and \eqref{ConvergenceMixedQ_k(f)} it is easy to see that 
a continuous function  $f$ has the decomposition
\begin{equation} \nonumber
f \ =  \ \sum_{k \in {\ZZ}^d_+} q_k(f)
\end{equation}
with the convergence in the norm of $L_\infty(\IId)$.  
By using the definition of \eqref{eq:Def[q_k]} and the refinement equation for the B-spline $M$, we can represent 
the component functions $q_k(f)$ as 
 \begin{equation} \label{eq:RepresentationMixedq_k(f)}
q_k(f) 
= \ \sum_{s \in J_r^d(k)}c^{(r)}_{k,s}(f)M^{(r)}_{k,s},
\end{equation}
where $c^{(r)}_{k,s}$ are certain coefficient functionals of 
$f,$ which are defined as follows (see \cite{Di11} for details). 
We first define $c^{(r)}_{k,s}$ for univariate functions ($d=1$). If the order $r$ of the B-spline $M$ is even,
\begin{equation}\label{def:c_{k,s}}
c^{(r)}_{k,s}(f)
\ := \
a_{k,s}(f) - a^\prime_{k,s}(f), \ k \ge 0,
\end{equation}
where
\begin{equation*}
a^\prime_{k,s}(f):= \ 2^{-r+1} \sum_{(m,j) \in C_r(k,s)}
 \binom{r}{j} a_{k-1,m}(f),  \ k > 0, \ \ 
a^\prime_{0,s}(f):= 0.
\end{equation*}
 and 
\begin{equation*} 
C_r(k,s) := \{(m,j): 2m + j - r/2 = s, \ m \in J(k-1), \  0 \le j \le r\}, \ k > 0, \ \
C_r(0,s) := \{0\}.
\end{equation*}
If the order $r$ of the B-spline $M$ is odd,  
\begin{equation*}
c^{(r)}_{k,s}(f)
\ := \
\begin{cases}
0, & \ k = 0, \\
a_{k,s/2}(f), & \ k >0, \ s \ \text{even}, \\
 2^{-r+1} \sum_{(m,j) \in C_r(k,s)}
 \binom{r}{j} a_{k-1,m}(f), & \ k >0, \ s \ \text{odd}, 
\end{cases}
\end{equation*}
where
 \begin{equation*} 
C_r(k,s) := \{(m,j): 4m + 2j - r = s, \ m \in J(k-1), \  0 \le j \le r\}, \ k > 0, \ \
C_r(0,s) := \{0\}.
\end{equation*}
 
In the multivariate case, the representation  \eqref{eq:RepresentationMixedq_k(f)} holds true 
with the $c^{(r)}_{k,s}$ which are defined in the manner of the definition of 
\eqref{def:Mixed[a_{k,s}(f)]} by
\begin{equation} \label{def:Mixedc_{k,s}}
c^{(r)}_{k,s}(f) 
\ = \   
c^{(r)}_{k_1,s_1}((c^{(r)}_{k_2,s_2}(... c^{(r)}_{k_d,s_d}(f))).
\end{equation}

Thus, we have proven  the following  
\begin{lemma} \label{lemma[representation]}
Every continuous function $f$ on $\IId$ can be represented as B-spline series 
\begin{equation} \label{eq:B-splineRepresentation}
f \ = \sum_{k \in {\ZZ}^d_+} \ q_k(f) = 
\sum_{k \in {\ZZ}^d_+} \sum_{s \in J_r^d(k)} c^{(r)}_{k,s}(f)M^{(r)}_{k,s}, 
\end{equation}  
converging in the norm of $L_\infty(\IId)$, where the coefficient functionals $c^{(r)}_{k,s}(f)$ are explicitly constructed by formula 
\eqref{def:c_{k,s}}--\eqref{def:Mixedc_{k,s}} as
linear combinations of at most $N$ function
values of $f$ for some $N \in \NN$ which is independent of $k,s$ and $f$.
\end{lemma}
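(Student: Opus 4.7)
The lemma is essentially a synthesis of the machinery built up in the preceding pages, so the plan is to package those pieces cleanly rather than introduce new ideas.

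First I would establish the outer series representation $f = \sum_{k \in \ZZdp} q_k(f)$. The identity \eqref{eq:MixedQ_k(2)} gives $Q_k(f) = \sum_{k' \le k} q_{k'}(f)$, i.e., $Q_k(f)$ is the partial sum of the proposed series over the $d$-dimensional rectangle $\{k' : k' \le k\}$. Combined with the uniform convergence \eqref{ConvergenceMixedQ_k(f)}, this forces convergence of the series in $L_\infty(\IId)$ to $f$, where convergence is understood in the sense that the rectangular partial sums converge as $k \to \infty$ coordinatewise.

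Next I would pass from the abstract expression $q_k(f)$ to the concrete B-spline expansion \eqref{eq:RepresentationMixedq_k(f)}. For even order $r$, the centered B-spline $M$ satisfies the standard refinement equation expressing $M(x-s)$ on the coarse scale as a combination of the dyadically refined translates $M(2x-\sigma)$. Applying this coordinatewise to $Q_{k-{\bf 1}}(f)$ allows me to write $Q_{k-{\bf 1}}(f)$ in the same basis $\{M_{k,s}\}$ used for $Q_k(f)$, and taking the univariate differences $(Q_{k_i} - Q_{k_i-1})$ coordinatewise yields precisely the coefficient formulas \eqref{def:c_{k,s}} via the binomial weights $\binom{r}{j}2^{-r+1}$. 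For odd order $r$ the argument is the same but carried out with the half-integer translated dilations $M^*_{k,s}$, since the refinement for an odd-order B-spline naturally lives on the half-integer grid; this accounts for the even/odd split in the definition of $c^{(r)}_{k,s}$. The multivariate case then follows from the tensor product structure \eqref{def:Mixed[Q_k]} and the iterated coefficient definition \eqref{def:Mixedc_{k,s}}.

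Finally I would verify the ``at most $N$ samples'' assertion. By construction $c^{(r)}_{k,s}(f)$ is built by iterating the univariate functional $a_{k_i,s_i}$ from \eqref{def:Lambda}, which is a linear combination of at most $2\mu+1$ values of the extension $\bar f_{k_i}$; the extensions $U_{k_i}(f), V_{k_i}(f)$ are Lagrange polynomials in $r$ prescribed samples of $f$, so each $\bar f_{k_i}(2^{-k_i}(s_i - j))$ is a fixed linear combination of at most $r$ sampled values of $f$ itself. Iterating over the $d$ coordinates gives a uniform bound $N = N(d,r,\mu)$, independent of $k$, $s$ and $f$, which is what was claimed.

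The only part requiring genuine care is the refinement-equation step for the odd-order case, since one must check that the half-integer B-spline $M^*_{k,s}$ is indeed the right object to represent $q_k$ on the refined grid; but this is a direct calculation using the refinement identity and the definitions in \eqref{def:c_{k,s}}ff., and the rest of the proof is bookkeeping.
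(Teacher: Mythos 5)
Your proposal is correct and follows essentially the same route as the paper, which presents this lemma as a summary of the preceding construction: the rectangular partial sums $Q_k(f)=\sum_{k'\le k}q_{k'}(f)$ together with \eqref{ConvergenceMixedQ_k(f)} give the $L_\infty$-convergence, the refinement equation (on the integer grid for even $r$, the half-integer grid for odd $r$) produces the coefficient formulas \eqref{def:c_{k,s}}--\eqref{def:Mixedc_{k,s}}, and the bound $N=N(d,r,\mu)$ on the number of samples follows by iterating the univariate functionals. Nothing is missing.
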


We prove now theorems on quasi-interpolation  representation of functions from $\BO$ and $\Ba$, $\Bab$ by series 
\eqref{eq:B-splineRepresentation} satisfying a discrete equivalent quasi-norm. We need some auxiliary lemmas.
Let us use the notation: $x_+:= ((x_1)_+, ..., (x_d)_+)$ for $x \in \RR^d$. Put 
${\NN}^d(e):= \{s \in \ZZdp: s_i > 0 , \ i \in e, \ s_i = 0 ,  i \notin e\}$ for $e \subset [d]$ 
(in particular, ${\NN}^d(\varnothing)= \{0\}$ and ${\NN}^d([d])= {\NN}^d$).
 We have ${\NN}^d(u)\cap {\NN}^d(v) = \varnothing$ if $u \ne v$, 
 and the decomposition
$
{\ZZ}^d_+ =  \bigcup_{e \subset [d]} {\NN}^d(e).
$

\begin{lemma}[\cite{Di11}] \label{Lemma:IneqMixedq_k(f))} 
Let $0 < p \le \infty$ and $0 < \tau \le \min (p,1)$. 
Then for any $f \in C(\II^d)$ and $k \in {\NN}^d(e)$, we have 
\begin{equation*}  
\| q_k(f)\|_p  
\ \le \ 
C  \sum_{v \supset e} \left( \sum_{s \in {\ZZ}^d_+(v), \ s \ge k}
 \left\{ 2^{|s-k|_1/p}\omega_{r}^v(f,2^{-s})_p \right\}^\tau \right)^{1/\tau}
\end{equation*}
with some constant $C$ depending at most on $r, \mu, p, d$ and $\|\Lambda\|,$  
whenever the sum in the right-hand side is finite.
\end{lemma}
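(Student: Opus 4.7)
The plan is to follow the strategy of \cite{Di11}, handling the univariate case first and then iterating across coordinates. Since $k \in {\NN}^d(e)$ means $k_i \geq 1$ for $i \in e$ and $k_i = 0$ for $i \notin e$, the definition \eqref{eq:Def[q_k]} factors as
\[
q_k \ = \ \prod_{i \in e}(Q_{k_i} - Q_{k_i-1}) \, \prod_{i \notin e} Q_0,
\]
and each univariate factor $Q_{k_i} - Q_{k_i-1}$ with $i \in e$ annihilates polynomials of degree less than $r$ in $x_i$, whereas the trailing $Q_0$ is only bounded on $C(\II)$.

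First I would establish the univariate estimate: for $g \in C(\II)$ and $k \geq 1$,
\[
\|(Q_k - Q_{k-1})g\|_p \ \leq \ C \biggl(\sum_{s \geq k} \bigl\{ 2^{(s-k)/p}\, \omega_r(g, 2^{-s})_p \bigr\}^\tau\biggr)^{1/\tau}.
\]
The idea is to write $Q_k - Q_{k-1} = (Q_k - I) - (Q_{k-1} - I)$ and exploit a local Whitney-type bound $\|Q_kg - g\|_{L_p(I)} \ll \omega_r(g, 2^{-k})_{L_p(I^*)}$ on each dyadic sub-interval $I$ of length $\sim 2^{-k}$ (with $I^*$ a mild enlargement), which follows from polynomial reproduction $Q_kP = P$ for $P \in \Pp_{r-1}$ combined with the boundedness \eqref{ineq:Boundedness}. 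Gluing local pieces via \eqref{ineq:L_p(IId)Norm} yields $\|(Q_k - Q_{k-1})g\|_p \ll \omega_r(g, 2^{-k})_p$ when $p \geq 1$; for $p < 1$ one telescopes through $\{Q_sg\}_{s\ge k}$, and the weight $2^{(s-k)/p}$ arises from the B-spline stability equivalence \eqref{eq:StabIneq} applied at each dyadic scale.

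Then I would iterate coordinatewise over $i \in e$. Each application of $Q_{k_i} - Q_{k_i - 1}$ in variable $x_i$ (all other variables held fixed) contributes a univariate modulus in that direction; using the sub-additivity of mixed moduli and Lemma \ref{normequivalence[B_1]} direction by direction, the resulting product is dominated by the mixed modulus $\omega_r^e(f, 2^{-s})_p$. The sum over $v \supset e$ reflects the residual action of $Q_0$ in the coordinates $i \notin e$: when this is routed through the quasi-norm inequality \eqref{ineq:L_p(IId)Norm} with $\tau < 1$, the local error estimates bring in additional differences in variables outside $e$, so any superset $v \supset e$ may contribute, with the constraint $s \in {\ZZ}^d_+(v)$ forcing $s_i = 0$ for $i \notin v$ and the factor $2^{|s-k|_1/p}$ arising as the product of univariate scalings.

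The main obstacle is the quasi-norm regime $p < 1$, where standard Minkowski fails and every summation must be routed through \eqref{ineq:L_p(IId)Norm}; pinning down the exponent $2^{|s-k|_1/p}$ requires careful coordinated use of the stability \eqref{eq:StabIneq} at each scale $s \geq k$. A secondary technical point is that the Lagrange extension $\bar{f}_{k_i}$ defining $Q_{k_i}$ near $\partial\IId$ must not spoil the modulus of smoothness; this is handled by observing that boundary Lagrange interpolation also reproduces $\Pp_{r-1}$, so the boundary Whitney bounds line up with their interior counterparts, and the constant $C$ in the final estimate depends only on $r,\mu,p,d,\|\Lambda\|$.
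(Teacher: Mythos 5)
The paper itself gives no proof of this lemma (it is quoted from \cite{Di11}), and your overall architecture --- the coordinatewise factorization of $q_k$ with $Q_0$ acting in the coordinates outside $e$, polynomial reproduction, local estimates glued by \eqref{ineq:L_p(IId)Norm}, and B-spline stability \eqref{eq:StabIneq} --- does match the proof given there. However, one central step of your argument is wrong, and it is precisely the step that explains why the right-hand side of the lemma has the form it does. You claim a local Whitney-type bound $\|Q_kg-g\|_{L_p(I)}\ll\omega_r(g,2^{-k})_{L_p(I^*)}$ and conclude that $\|(Q_k-Q_{k-1})g\|_p\ll\omega_r(g,2^{-k})_p$ for $p\ge 1$, reserving the weighted sum over $s\ge k$ for the regime $p<1$. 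Both claims are false for every finite $p$: the functionals $\Lambda(\bar g_k,s;2^{-k})$ are built from point values of $g$, and point values are not controlled by local $L_p$-norms. Concretely, let $g$ be a continuous bump with $g(2^{-k}j_0)=1$ ($j_0$ odd) supported on an interval of length $\delta\ll 2^{-k}$; then $\omega_r(g,2^{-k})_p\ll\delta^{1/p}\to 0$ as $\delta\to 0$, while $Q_{k-1}g=0$ and $\|(Q_k-Q_{k-1})g\|_p=\|Q_kg\|_p\asymp 2^{-k/p}$ stays bounded below by \eqref{eq:StabIneq}. So the single-term estimate cannot hold for any $p<\infty$, and the sum $\sum_{s\ge k}2^{(s-k)/p}\omega_r(\cdot,2^{-s})_p$ in the lemma is not an artifact of the quasi-norm case $p<1$.

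The missing ingredient is a local analogue of the embedding $B^{1/p}_{p,\tau}\hookrightarrow C$: after subtracting from $f$ a near-best local polynomial $P\in\Pp_{r-1}$ (legitimate because $Q_k$ and $Q_{k-1}$ both reproduce $\Pp_{r-1}$, so $q_k(f)=q_k(f-P)$), one must bound the relevant point values by $\|f-P\|_{L_\infty(I^*)}\ll\sum_{s\ge k}2^{s/p}\,\omega_r(f,2^{-s})_{L_p(I^*)}$; multiplying by $\|M_{k,\cdot}\|_p\asymp 2^{-k/p}$ then produces exactly the weights $2^{(s-k)/p}$, for all $0<p<\infty$. Your alternative attribution of these weights to \emph{stability applied at each dyadic scale in a telescoping of} $\{Q_sg\}_{s\ge k}$ is circular as stated, since $g-Q_kg=\sum_{s>k}q_s(g)$ returns precisely the quantities you are trying to bound. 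The multivariate bookkeeping in your final paragraph --- supersets $v\supset e$ arising from the merely bounded factors $Q_0$ in the coordinates outside $e$, with $s\in\ZZdp(v)$, $s\ge k$ --- is the right picture, but it can only be carried through once the univariate estimate is repaired as above.
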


The following lemma can be proven in a way similar to the proof of \cite[Lemma 2.3]{Di11}.

\begin{lemma} \label{Lemma:IneqMixedomega_{r}^e}  
Let $0 < p \le \infty, \ 0 < \tau \le \min(p,1), \ \delta = \min (r, r - 1 + 1/p)$. 
Let $g \in L_p(\IId)$ be represented by  the  series 
\begin{equation*} 
g \ = \sum_{k \in {\ZZ}^d_+} \ g_k, \ g_k \in \Sigma_r^d(k)
\end{equation*}
converging in the norm of $L_\infty(\IId)$.
Then for any $k \in {\ZZ}^d_+(e)$, there holds the inequality 
\begin{equation*} 
\omega_{r}^e(g,2^{-k})_p 
\ \le \ 
C   \left( \sum_{s\in {\ZZ}^d_+}
 \left\{ 2^{-\delta |(k-s)_+|_1} \| g_s\|_p \right\}^\tau \right)^{1/\tau}
\end{equation*}
with some constant $C$ depending at most on $r, \mu, p, d$ and $\|\Lambda\|,$  
whenever the sum on the right-hand side is finite.
\end{lemma}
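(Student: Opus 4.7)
The plan is to reduce the multivariate mixed-modulus estimate to a univariate Bernstein-type inequality for piecewise polynomials on a dyadic mesh, and then combine it with the quasi-triangle inequality \eqref{ineq:L_p(IId)Norm}.

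\textbf{Step 1 (Univariate B-spline inequality).} I first want to establish that for every univariate $\phi \in \Sigma_r^1(s)$ and every $|h| \le 2^{-k}$,
\begin{equation*}
\|\Delta_h^r \phi\|_{p,\II}
\ \le \
C \, 2^{-\delta (k-s)_+} \|\phi\|_{p,\II}.
\end{equation*}
When $k < s$ this is just the trivial bound $\|\Delta_h^r \phi\|_p \le 2^r \|\phi\|_p$. When $k \ge s$, i.e.\ the step is finer than the B-spline mesh, $\phi$ is a polynomial of degree $r-1$ on each piece, so $\Delta_h^r \phi$ is supported in an $r|h|$-neighborhood of the breakpoints; there are $O(2^s)$ such breakpoints. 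Bounding $|\Delta_h^r\phi|$ locally by $\|\phi\|_\infty$ and using the standard inverse Bernstein (Nikolskii) inequality $\|\phi\|_\infty \le C\,2^{s/p}\|\phi\|_p$ for B-spline elements gives the exponent $\delta=\min(r,r-1+1/p)$.

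\textbf{Step 2 (Tensorization for one component).} Fix $g_s \in \Sigma_r^d(s)$. Since $\Delta_h^{r,e} = \prod_{i\in e}\Delta_{h_i}^r$ acts variable-by-variable, I apply Step 1 in each coordinate $i\in e$ iteratively, using the Fubini-type decomposition $\|f\|_p^p = \int \|f(\,\cdot\,,\hat x_i)\|_{p,\II}^p\,d\hat x_i$ for $p<\infty$ (with the obvious sup modification for $p=\infty$). Because the slice in variable $x_i$ still lies in the univariate spline space (with coefficients depending on the frozen variables), one obtains
\begin{equation*}
\|\Delta_h^{r,e}(g_s)\|_p
\ \le \
C\, 2^{-\delta \sum_{i\in e}(k_i-s_i)_+}\|g_s\|_p.
\end{equation*}
Since $k\in\ZZdpe$, the exponent $\sum_{i\in e}(k_i-s_i)_+$ coincides with $|(k-s)_+|_1$.

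\textbf{Step 3 (Sub-additivity and supremum).} The series $g=\sum_s g_s$ converges in $L_\infty(\IId)$, hence in $L_p(\IId)$, so $\Delta_h^{r,e}g=\sum_s\Delta_h^{r,e}(g_s)$. Applying the quasi-triangle inequality \eqref{ineq:L_p(IId)Norm} with $\tau\le\min(p,1)$ and the bound from Step 2,
\begin{equation*}
\|\Delta_h^{r,e}g\|_p^\tau
\ \le \
\sum_s \|\Delta_h^{r,e}(g_s)\|_p^\tau
\ \le \
C^\tau \sum_s 2^{-\delta\tau |(k-s)_+|_1}\|g_s\|_p^\tau.
\end{equation*}
Taking the supremum over $|h_i|<2^{-k_i}$, $i\in e$ (the right-hand side is independent of $h$), yields the claimed estimate for $\omega_r^e(g,2^{-k})_p$.

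\textbf{Main obstacle.} The delicate point is the univariate Bernstein-type inequality of Step 1 with the sharp exponent $\delta=\min(r,r-1+1/p)$, in particular for $p<1$ where the Nikolskii constant must be tracked carefully and the breakpoint-neighborhood argument is sensitive to how the local $L_\infty$-norm is converted to $L_p$. Once Step 1 is in hand with this exponent, the tensorization in Step 2 is a routine iterated application, and the summation in Step 3 is essentially a direct consequence of \eqref{ineq:L_p(IId)Norm}, closely paralleling the argument used in Lemma~\ref{Lemma:IneqMixedq_k(f))} (cited from \cite{Di11}).
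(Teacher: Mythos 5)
Your three-step architecture is exactly the route the paper intends: the lemma is proved there by reference to \cite[Lemma 2.3]{Di11}, whose proof (going back to DeVore and Popov) is precisely the reduction to a univariate Bernstein-type inequality for dyadic splines, tensorization by Fubini, and the $\tau$-triangle inequality \eqref{ineq:L_p(IId)Norm}. Steps 2 and 3 of your proposal are sound as written, including the observation that $\sum_{i\in e}(k_i-s_i)_+=|(k-s)_+|_1$ because $k\in{\ZZ}^d_+(e)$, and the interchange of $\Delta_h^{r,e}$ with the $L_\infty$-convergent sum before invoking \eqref{ineq:L_p(IId)Norm}.

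The gap is in the mechanism you give for Step 1 in the case $k\ge s$. If you only bound $|\Delta_h^r\phi|$ pointwise by a multiple of $\|\phi\|_\infty$ and combine the Nikolskii inequality with the fact that $\Delta_h^r\phi$ is supported on a set of measure $\ll 2^{s-k}$, you obtain at best $\|\Delta_h^r\phi\|_p \ll 2^{-(k-s)/p}\|\phi\|_p$ --- and even that only after localizing the Nikolskii inequality to the polynomial pieces; with the global sup-norm a stray factor $2^{s/p}$ survives. The exponent $r-1$ in $\delta=\min(r,r-1+1/p)$ does not come from the measure of the support; it comes from the $C^{r-2}$ smoothness of the order-$r$ spline across its knots. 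Concretely, if $t_j$ is a knot and $P$ is the polynomial piece of $\phi$ to its left, then near $t_j$ one has $\Delta_h^r\phi=\Delta_h^r(\phi-P)$, and $\phi-P$ vanishes to order $r-1$ at $t_j$, so that $|\Delta_h^r\phi(x)|\ll (2^s h)^{r-1}\|\phi\|_{\infty,\mathrm{loc}}$ by Markov's inequality on the adjacent mesh intervals of length $2^{-s}$. Only after inserting this extra factor $(2^sh)^{r-1}$, and then the measure factor $(2^sh)^{1/p}$, do you arrive at $\|\Delta_h^r\phi\|_p\ll (2^{s-k})^{r-1+1/p}\|\phi\|_p$, which is at least as strong as the claimed $2^{-\delta(k-s)}\|\phi\|_p$. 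As written, your Step 1 would deliver a much weaker exponent, and the entire lemma hinges on this point, so the breakpoint argument needs to be carried out with the order-$(r-1)$ vanishing at the knots made explicit.
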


For $0 < p \le \infty$, $k \in \ZZ_+$ and $i \in [d]$, denote by $g \in \Sigma_{r,i}^d(k)_p$ the set of all functions 
$g \in L_p(\IId)$ such that
\begin{equation} \nonumber
g(x) = \sum_{s \in J_r^1(k)} a_s(x_1,...,x_{i-1},x_{i+1},...,x_d) \, M^{(r)}_{k,s}(x_i).
\end{equation}

\begin{lemma} \label{Lemma:IneqMixedomega_{r,i}}  
Let $0 < p \le \infty, \ 0 < \tau \le \min(p,1), \ \delta = \min (r, r - 1 + 1/p)$. 
Let $g \in L_p(\IId)$ be represented by  the  series 
\begin{equation} \label{representationby[Sigma_{r,i}^d(k)_p]}
g \ = \sum_{s \in \ZZ_+} \ g_s, \ g_s \in \Sigma_{r,i}^d(s)_p
\end{equation}
converging in the norm of $L_\infty(\IId)$.
Then for any $k \in \ZZ_+$, there holds the inequality 
\begin{equation*} 
\omega_{r,i}(g,2^{-k})_p 
\ \le \ 
C   \left( \sum_{s\in \ZZ_+}
 \left\{ 2^{-\delta (k-s)_+} \| g_s\|_p \right\}^\tau \right)^{1/\tau}
\end{equation*}
with some constant $C$ depending at most on $r, \mu, p, d$ and $\|\Lambda\|,$  
whenever the sum on the right-hand side is finite.
\end{lemma}

\begin{proof}
This lemma can be also proven in a way similar to the proof of \cite[Lemma 2.3]{Di11} with a slight modification by replacing $\Delta^r_h$, $h \in \IId$, and the representation \eqref{eq:B-splineRepresentation} with 
$\Delta^{r,i}_h$, $h \in \II$, and the representation \eqref{representationby[Sigma_{r,i}^d(k)_p]}.
\end{proof}

Let $0 < \theta \le \infty$ and $\psi: \ZZdp \to \RR$. 
If $\{g_k\}_{k \in \ZZdp}$ is a sequence of numbers, we define the ``quasi-norm" $\|\{ g_k\}\|_{b_{\theta}^{\psi}}$ by
\begin{equation*}
\|\{ g_k\}\|_{b_{\theta}^{\psi}}
\ := \ 
 \biggl(\sum_{k \in \ZZdp} 
 \left(2^{\psi(k)} |g_k|\right)^\theta \biggl)^{1/\theta} 
\end{equation*}
with the usual change to a supremum when $\theta = \infty.$ 
We will need the following generalized discrete Hardy inequality 
(see, e.g, \cite{DL} for the univariate case with $\psi(k)= \alpha k, \ \alpha >0$).

\begin{lemma} \label{lemma[HardyIneq]}
Let $\{a_k\}_{k \in \ZZdp}$ and $\{b_k\}_{k \in \ZZdp}$ be two positive 
sequences and let for some $A > 0, \ \tau > 0, \ \delta > 0$ 
\begin{equation} \label{HardyIneq(1)}
b_k
 \  \le \ A \biggl(  \sum_{s \in \ZZdp} 
\left(2^{- \delta |(k-s)_+|_1}a_s\right)^\tau \biggl)^{1/\tau}.
\end{equation}
Let the function $\psi: \ZZdp \to \RR$ satisfy the following.
There are numbers $c_1, c_2 \in \RR
$, $\epsilon > 0$ and $0 < \zeta < \delta$ such that
\begin{equation}  \label{HardyIneq(conditionI)}
\psi(k)- \epsilon |k|_1
 \  \le \ 
\psi(k')- \epsilon |k'|_1 + c_1, \ \ k \le k', \ k, k' \in \ZZdp,
\end{equation}
and
\begin{equation} \label{HardyIneq(conditionII)}
\psi(k)- \zeta |k|_1
 \  \ge \ 
\psi(k')- \zeta |k'|_1 + c_2, \ \ k \le k', \ k, k' \in \ZZdp.
\end{equation}
Then for $0 < \theta \le \infty$, we have 
\begin{equation} \label{HardyIneq(2)}
\ \|\{ b_k\}\|_{b_\theta^\psi}
\ \le \ 
 C A \|\{ a_k\}\|_{b_\theta^\psi}
\end{equation}
with $C = C(c_1,c_2, \epsilon, \delta, \theta, d) >0$. 
\end{lemma}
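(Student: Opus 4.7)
The plan is to reduce the weighted inequality to a discrete convolution estimate by absorbing the weight $2^{\psi}$ into the sequences and then applying a H\"older/Young argument. Introduce $\widetilde a_s := 2^{\psi(s)} a_s$ and $\widetilde b_k := 2^{\psi(k)} b_k$; the target \eqref{HardyIneq(2)} then reads $\|\widetilde b\|_{\ell^\theta(\ZZdp)} \le C A\,\|\widetilde a\|_{\ell^\theta(\ZZdp)}$, while \eqref{HardyIneq(1)} rewrites as
\begin{equation*}
\widetilde b_k^{\,\tau}\ \le\ A^\tau \sum_{s\in\ZZdp} K(k,s)^\tau\, \widetilde a_s^{\,\tau},
\end{equation*}
where the kernel $K(k,s)$ is built from $\psi(k)-\psi(s)$ and the exponential factor appearing in \eqref{HardyIneq(1)}.

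The heart of the proof is the pointwise kernel bound
\begin{equation*}
K(k,s)\ \le\ C_0\, 2^{-\mu|k-s|_1},\qquad k,s\in\ZZdp,
\end{equation*}
with $\mu := \min(\delta-\zeta,\epsilon) > 0$. To establish it, set $m := k\wedge s$ (componentwise minimum), so that $m\le k$ and $m\le s$, and note the arithmetic identities $|k|_1-|m|_1 = |(k-s)_+|_1$ and $|s|_1-|m|_1 = |(s-k)_+|_1$. Applying \eqref{HardyIneq(conditionII)} on the pair $m\le k$ yields $\psi(k)-\psi(m)\le \zeta|(k-s)_+|_1 - c_2$, while \eqref{HardyIneq(conditionI)} on $m\le s$ yields $\psi(s)-\psi(m)\ge \epsilon|(s-k)_+|_1 - c_1$; subtracting gives $\psi(k)-\psi(s) \le \zeta|(k-s)_+|_1 - \epsilon|(s-k)_+|_1 + (c_1-c_2)$. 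Combining this with the exponential factor in \eqref{HardyIneq(1)} and using $|(k-s)_+|_1 + |(s-k)_+|_1 = |k-s|_1$ together with $\delta > \zeta$ and $\epsilon > 0$ yields the desired kernel decay.

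Given the kernel bound, the hypothesis becomes the discrete convolution estimate $\widetilde b_k^{\,\tau}\le CA^\tau \sum_s 2^{-\mu\tau|k-s|_1}\,\widetilde a_s^{\,\tau}$, and the passage from the inner $\ell^\tau$ sum to the outer $\ell^\theta$ sum is handled by cases. For $\theta \ge \tau$, I split $\mu = \beta+\gamma$ with $\beta,\gamma>0$, factor $2^{-\mu\tau|k-s|_1} = 2^{-\beta\tau|k-s|_1}\cdot 2^{-\gamma\tau|k-s|_1}$, and apply H\"older's inequality over $s$ with exponents $\theta/\tau$ and $\theta/(\theta-\tau)$; the uniform $d$-dimensional geometric bound $\sum_s 2^{-c|k-s|_1}\le C_1$ then gives $\widetilde b_k^{\,\theta} \le C' A^\theta \sum_s 2^{-\beta\theta|k-s|_1}\widetilde a_s^{\,\theta}$. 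For $0<\theta<\tau$, the elementary inequality $\bigl(\sum_s x_s\bigr)^{\theta/\tau}\le \sum_s x_s^{\theta/\tau}$ delivers the same form of bound directly. In either case, summing over $k$ and interchanging the order of summation yields $\|\widetilde b\|_{\ell^\theta}\le C_2 A\,\|\widetilde a\|_{\ell^\theta}$. The endpoint $\theta=\infty$ follows immediately from the kernel estimate and the uniform summability of $\sum_s 2^{-\mu\tau|k-s|_1}$.

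The main obstacle is the kernel estimate of the second step: both monotonicity conditions on $\psi$ must be used simultaneously, with $\zeta<\delta$ ensuring decay in the regime where $k>s$ componentwise and $\epsilon>0$ ensuring decay in the complementary regime $s>k$, so that the combined kernel decays exponentially in the full $\ell^1$-distance $|k-s|_1$. Once that estimate is secured, the remainder reduces to a routine discrete H\"older argument in the weighted $\ell^\theta$ scale.
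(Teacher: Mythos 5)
Your argument is correct and establishes the lemma, but by a genuinely different and in fact tidier route than the paper's. The paper splits the sum in \eqref{HardyIneq(1)} over the $2^d$ regions $Z(e,k)=\{s\in\ZZdp:\ s_j\le k_j,\ j\in e;\ s_j>k_j,\ j\notin e\}$, applies H\"older's inequality on each region with auxiliary exponents $\zeta'\in(\zeta,\delta)$ and $\epsilon'\in(0,\epsilon)$, and compares $\psi(k)$ with $\psi(s)$ through the intermediate point $(s(e),k(\bar e))$, using \eqref{HardyIneq(conditionII)} on the coordinates in $e$ and \eqref{HardyIneq(conditionI)} on those in $\bar e$; it also first reduces to $\tau<\theta$. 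Your proof compresses all of this into one pointwise kernel estimate via the componentwise minimum $m=k\wedge s$ --- which, for $s\in Z(e,k)$, is exactly the paper's intermediate point $(s(e),k(\bar e))$ --- after which the assertion is a Schur-test/discrete-convolution bound for the kernel $2^{-\mu|k-s|_1}$. This packaging makes the $\ell^\theta$ bookkeeping routine and treats $\theta\ge\tau$, $\theta<\tau$ and $\theta=\infty$ uniformly; your identities $|k|_1-|m|_1=|(k-s)_+|_1$, $|s|_1-|m|_1=|(s-k)_+|_1$, the two-sided use of \eqref{HardyIneq(conditionI)}--\eqref{HardyIneq(conditionII)}, the splitting $\mu=\beta+\gamma$ inside H\"older, and the final interchange of summation are all sound.

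One caveat you must make explicit: your kernel bound $K(k,s)\le C_0\,2^{-\mu|k-s|_1}$ with $\mu=\min(\delta-\zeta,\epsilon)$ is valid only when the exponential factor in \eqref{HardyIneq(1)} is read as $2^{-\delta|(k-s)_+|_1}$. With the factor as literally printed, $2^{+\delta|(s-k)_+|_1}$, the estimate fails in both regimes (for $s\le k$ the factor is $1$ while $\psi(k)-\psi(s)$ may grow like $\zeta|k-s|_1$; for $s\ge k$ the product behaves like $2^{(\delta-\epsilon)|s-k|_1}$ and no relation between $\delta$ and $\epsilon$ is assumed), and indeed the literal statement is false: take $\epsilon=\zeta$, $\psi(k)=\zeta|k|_1$, $a_s=\mathbf{1}[s=0]$, so that the right-hand side of \eqref{HardyIneq(1)} equals $1$ for every $k$ while $\|\{a_k\}\|_{b_\theta^\psi}=1$ and $\|\{b_k\}\|_{b_\theta^\psi}=\infty$. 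The sign is a typo in the paper: its own proof manipulates precisely the kernel $2^{\delta(|s(e)|_1-|k(e)|_1)}=2^{-\delta|(k-s)_+|_1}$ on $Z(e,k)$, and the applications in Theorem 2.1 require exactly this version. So you have proved the intended statement; since the decay rate $\min(\delta-\zeta,\epsilon)$ you derive can only come from the corrected factor, you should state the corrected hypothesis rather than cite \eqref{HardyIneq(1)} verbatim.
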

\begin{proof}
Because the right side of \eqref{HardyIneq(1)} becomes larger when $\tau$ becomes smaller, we can assume 
$\tau < \theta$. For $e \subset [d]$ and  $s \in \ZZ^d$, let  $\bar{e}: = [d] \setminus e$ and $s(e)\in \ZZ^d$ be defined by $s(e)_j = s_j$ if  $j \in e$, and $s(e)_j = 0$ if $j \in \bar{e}$. From \eqref{HardyIneq(1)} we have
\begin{equation} \label{ineq[b_k]}
b_k
 \  \ll \ 
A \sum_{e \subset [d]} B_k(e), \ k \in \ZZdp,
\end{equation}
where
\begin{equation} \nonumber
B_k(e)
:= \ 
2^{-\delta |k(e)|_1}\biggl(\sum_{s \in Z(e,k)} 
\left(2^{\delta |s(e)|_1}a_s\right)^\tau \biggl)^{1/\tau}
\end{equation}
and
$
Z(e,k)
:= \ 
\{s \in \ZZdp: \, s_j \le k_j, \, j\in e; \, s_j > k_j, j \not\in e\}.
$
Take numbers $\epsilon', \zeta', \theta'$ with the conditions 
$0 < \epsilon' <\epsilon$, $\zeta < \zeta' < \delta$ and $\tau/\theta + \tau/\theta' = 1$, respectively. 
Applying H\"older's inequality with exponents $\theta/\tau, \theta'/\tau$, we obtain
\begin{equation} \nonumber
\begin{aligned}
B_k(e)
\ &\le \
2^{-\delta |k(e)|_1}\biggl(\sum_{s \in Z(e,k)} 
\big(2^{\zeta'|s(e)|_1 + \epsilon'|s(\bar{e})|_1} a_s\big)^\theta \biggl)^{1/\theta}
\biggl(\sum_{s \in Z(e,k)} 
\big(2^{(\delta-\zeta')|s(e)|_1 - \epsilon'|s(\bar{e})|_1}\big)^{\theta'} \biggl)^{1/\theta'} \\
\ &\ll \
2^{-\delta |k(e)|_1}\biggl(\sum_{s \in Z(e,k)} 
\big(2^{\zeta'|s(e)|_1 + \epsilon'|s(\bar{e})|_1} a_s\big)^\theta \biggl)^{1/\theta}
2^{(\delta-\zeta')|k(e)|_1 - \epsilon'|k(\bar{e})|_1} \\
\ &\ll \
2^{-\zeta'|k(e)|_1 - \epsilon'|k(\bar{e})|_1} 
\biggl(\sum_{s \in Z(e,k)} 
\big(2^{\zeta'|s(e)|_1 + \epsilon'|s(\bar{e})|_1} a_s\big)^\theta \biggl)^{1/\theta}. \\
\end{aligned}
\end{equation}
Hence,
\begin{equation} \label{inq[|{B_k(e)}|]}
\begin{aligned}
\|\{B_k(e)\}\|_{b_\theta^\psi}^\theta
\ &\ll \
\sum_{k \in \ZZdp} 2^{\theta(\psi(k)-\zeta'|k(e)|_1 - \epsilon'|k(\bar{e})|_1)} 
\sum_{s \in Z(e,k)} 
\left(2^{\zeta'|s(e)|_1 + \epsilon'|s(\bar{e})|_1} a_s\right)^\theta  \\
\ &\ll \
\sum_{s \in \ZZdp}
2^{\theta(\zeta'|s(e)|_1 + \epsilon'|s(\bar{e})|_1)} a_s^\theta 
 \sum_{k \in X(e,s)} 
2^{\theta(\psi(k)-\zeta'|k(e)|_1 - \epsilon'|k(\bar{e})|_1)},
\end{aligned}
\end{equation}
where 
\[
X(e,s):= \{k \in \ZZdp: \, k_j \ge s_j, \, j\in e; \, k_j < s_j, j \not\in e\}.
\]
From \eqref{HardyIneq(conditionI)} and \eqref{HardyIneq(conditionII)} we can easily derive  for $k \in X(e,s)$,
\[
\psi(k)
\le
\psi(s(e) + k(\bar{e}))- \zeta |s(e)|_1  + \zeta|k(e)|_1, 
\]
and
\[
\psi(s(e) + k(\bar{e})) \le
\psi(s)-  \epsilon |s(\bar{e})|_1 +  \epsilon  |k(\bar{e})|_1.
\]
Consequently,
\[\psi(k)-\zeta'|k(e)|_1 - \epsilon'|k(\bar{e})|_1
\ \le \
\psi(s)-\zeta|s(e)|_1 - \epsilon|s(\bar{e})|_1 - (\zeta' - \zeta)|k(e)|_1 + (\epsilon - \epsilon')|k(\bar{e})|_1, 
\]
and therefore, we can continue the estimation \eqref{inq[|{B_k(e)}|]} as
\begin{equation} \nonumber
\begin{aligned}
\|\{B_k(e)\}\|_{b_\theta^\psi}^\theta
\ &\ll \
\sum_{s \in \ZZdp}
2^{\theta(\psi(s) + (\zeta'-\zeta)|s(e)|_1 - (\epsilon-\epsilon')|s(\bar{e})|_1} a_s^\theta 
 \sum_{k \in X(e,s)} 
2^{\theta(- (\zeta'-\zeta)|k(e)|_1 + (\epsilon-\epsilon')|k(\bar{e})|_1} \\
\ &\ll \
\sum_{s \in \ZZdp}
2^{\theta(\psi(s) + (\zeta'-\zeta)|s(e)|_1 - (\epsilon-\epsilon')|s(\bar{e})|_1} a_s^\theta \, 
 2^{\theta(- (\zeta'-\zeta)|s(e)|_1 + (\epsilon-\epsilon')|s(\bar{e})|_1} \\
\ &= \
\sum_{s \in \ZZdp}
2^{\theta \psi(s)} a_s^\theta \ = \ \|\{a_k\}\|_{b_\theta^\psi}^\theta.
\end{aligned}
\end{equation}
Hence, by \eqref{ineq[b_k]} we prove \eqref{HardyIneq(2)}.
\end{proof}

We now are able to prove quasi-interpolation  B-spline representation theorems for functions from $\BO$ and $\Bab$, $\Ba$. 
For functions $f$ on ${\II}^d$, we introduce the following quasi-norms:
\begin{equation*} 
\begin{aligned}
B_2(f)
 \ & := \ 
\biggl(\sum_{k \in \ZZdp} 
\left\{\|q_k(f)\|_p/\Omega(2^{-k}) \right\}^\theta \biggl)^{1/\theta}; \\
B_3(f)
\ & := \ 
\biggl(\sum_{k \in \ZZdp} 
\left\{ 2^{ - |k|_1/p}\|\{c^{(r)}_{k,s}(f)\}\|_{p,k}/\Omega(2^{-k}) \right\}^\theta \biggl)^{1/\theta}.
\end{aligned}
\end{equation*}
Observe that by  \eqref{eq:StabIneq} 
the quasi-norms $B_2(f)$ and $B_3(f)$ are equivalent.

\begin{theorem} \label{Theorem:Representation}
Let $\ 0 < p, \theta \le \infty$ and $\Omega$ satisfy the additional conditions:
there are  numbers $ \mu, \rho > 0$ and $C_1, C_2 > 0$ such that
\begin{equation} \label{condition(a)}
\Omega(t) \,  \prod_{i=1}^d t_i^{-\mu}
\ \le \ 
C_1 \Omega(t') \, \prod_{i=1}^d {t'}_i^{-\mu} , \ \ t \le t', \ t, t' \in \IId, 
\end{equation} 
\begin{equation} \label{condition(b)}
\Omega(t) \,  \prod_{i=1}^d t_i^{-\rho}
\ \ge \ 
C_2 \Omega(t') \, \prod_{i=1}^d {t'}_i^{-\rho} , \ \ t \le t', \ t, t' \in \IId.  
\end{equation}
Then we have the following. 
\begin{itemize}

\item[(i)] If $\mu > 1/p$ and $\rho < r$, then a function $f \in \BO$ can be represented by the B-spline series 
\eqref{eq:B-splineRepresentation}
satisfying the convergence condition
\begin{equation} \label{ConvegenceCondition[BO]}
B_2(f) \ \ll \ \|f\|_{\BO}.
\end{equation}

\item[(ii)]  If $ \rho < \min(r, r - 1 + 1/p)$,
then a function $g$ on ${\II}^d$ represented by a series 
\begin{equation} \label{series[g]}
g \ = \sum_{k \in {\ZZ}^d_+} \ g_k = 
\sum_{k \in {\ZZ}^d_+} \sum_{s \in J_r^d(k)} c_{k,s} M^{(r)}_{k,s}, 
\end{equation}
satisfying the condition
\begin{equation} \nonumber
B_4(g):= \  
\biggl(\sum_{k \in \ZZdp} \left\{\|g_k\|_p/\Omega(2^{-k}) \right\}^\theta \biggl)^{1/\theta} \ < \ \infty,
\end{equation}
belongs  
the  space $\BO$.
Moreover, 
$
\|g\|_{\BO}
\ \ll \ 
B_4(g).
$

\item[(iii)]  If $\mu > 1/p$ and $ \rho < \min(r, r - 1 + 1/p)$,
then a function $f$ on ${\II}^d$ belongs to 
the  space $\BO$ if and only if 
$f$ can be represented by the series \eqref{eq:B-splineRepresentation} satisfying the convergence condition
\eqref{ConvegenceCondition[BO]}.
Moreover, the  quasi-norm $\|f\|_{\BO}$ is equivalent to  the quasi-norm $B_2(f)$.
\end{itemize} 
\end{theorem}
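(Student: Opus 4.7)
\textbf{Proof plan for Theorem \ref{Theorem:Representation}.}

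My plan is to replace $\|f\|_{\BO}$ by the equivalent discrete quasi-norm $B_1(f)$ via Lemma \ref{normequivalence[B_1]} and then transfer estimates through the generalized Hardy inequality of Lemma \ref{lemma[HardyIneq]} with the weight $\psi(k) := -\log_2 \Omega(2^{-k})$. The auxiliary assumptions \eqref{condition(a)} and \eqref{condition(b)} convert directly into the Hardy hypotheses \eqref{HardyIneq(conditionI)} and \eqref{HardyIneq(conditionII)} with $\epsilon = \mu$ and $\zeta = \rho$, respectively. Part (iii) follows at once from (i) and (ii) applied with $g_k = q_k(f)$, so only the two one-sided inclusions require argument.

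For part (i), I would first note that $\mu > 1/p$ together with \eqref{condition(a)} yields the embedding $\BO \hookrightarrow C(\IId)$, so Lemma \ref{lemma[representation]} delivers the B-spline representation \eqref{eq:B-splineRepresentation}. For each $k \in \ZZdp$, set $e(k) := \{i : k_i > 0\}$ and apply Lemma \ref{Lemma:IneqMixedq_k(f))} with $e = e(k)$ to bound $\|q_k(f)\|_p$ by a $\tau$-sum of $\omega_r^v(f, 2^{-s})_p$ over $v \supset e(k)$ and $s \ge k$ in $\ZZdp(v)$. Raising to the power $\theta$, dividing by $\Omega(2^{-k})^\theta$, summing over $k$, and invoking Lemma \ref{lemma[HardyIneq]} then yields $B_2(f) \ll B_1(f) \asymp \|f\|_{\BO}$. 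Since the sum in Lemma \ref{Lemma:IneqMixedq_k(f))} is restricted to $s \ge k$, one has $|(s-k)_+|_1 = |s-k|_1$, matching Lemma \ref{lemma[HardyIneq]}'s format; the weight shift $\tilde\psi(k) := \psi(k) - |k|_1/p$ converts $\mu > 1/p$ into the positivity $\tilde\epsilon = \mu - 1/p > 0$, while $\rho < r$ is the standard non-saturation condition ensuring that order-$r$ moduli faithfully resolve the smoothness scale $\Omega$.

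For part (ii), Lemma \ref{normequivalence[B_1]} reduces the task to $B_1(g) \ll B_4(g)$. I would apply Lemma \ref{Lemma:IneqMixedomega_{r}^e} to $g = \sum_k g_k$ to estimate $\omega_r^e(g, 2^{-k})_p$ by a $\tau$-sum of $\|g_s\|_p$ with weight $2^{-\delta |(k-s)_+|_1}$, where $\delta = \min(r, r-1+1/p)$. Raising to $\theta$, dividing by $\Omega(2^{-k})^\theta$, summing over $k \in \ZZdp(e)$, and invoking Lemma \ref{lemma[HardyIneq]} with $\psi(k) = -\log_2 \Omega(2^{-k})$ then does the job; the hypothesis $\rho < \min(r, r-1+1/p)$ is precisely the Hardy condition $\zeta < \delta$. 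Summing over $e \subset [d]$ concludes.

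The main obstacle is the careful Hardy application in part (i): Lemma \ref{Lemma:IneqMixedq_k(f))} delivers a one-sided sum over $s \ge k$ with exponent $1/p$, whereas Lemma \ref{lemma[HardyIneq]} is phrased for two-sided sums via the positive-part exponent $|(s-k)_+|_1$. The bridge is the shift of $\psi$ by $|k|_1/p$: after shifting, the hypothesis $\mu > 1/p$ makes $\tilde\epsilon > 0$ so that contributions from $s \not\ge k$ remain harmless, and $\rho < r$ keeps the analysis consistent with the B-spline order $r$. Simultaneously managing all $2^d$ boundary contributions from subsets $e \subsetneq [d]$, with the restricted index sets $\ZZdp(e)$ appearing in Lemmas \ref{Lemma:IneqMixedq_k(f))} and \ref{Lemma:IneqMixedomega_{r}^e}, introduces no new ideas but is the most tedious bookkeeping.
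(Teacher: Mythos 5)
Your plan is correct and follows essentially the same route as the paper's own proof: pass to the discrete quasi-norm $B_1(f)$ via Lemma \ref{normequivalence[B_1]}, bound $\|q_k(f)\|_p$ by Lemma \ref{Lemma:IneqMixedq_k(f))} (resp. $\omega_r^e(g,2^{-k})_p$ by Lemma \ref{Lemma:IneqMixedomega_{r}^e}), and close with the Hardy inequality of Lemma \ref{lemma[HardyIneq]} applied to $\psi(k)=-\log_2\Omega(2^{-k})$, shifted by $|k|_1/p$ in part (i) so that $\mu>1/p$ gives the required positive $\epsilon$ while $\zeta=\rho+1/p<\delta$ (resp. $\zeta=\rho<\delta=\min(r,r-1+1/p)$ in part (ii)). The only cosmetic discrepancies (the initial ``$\epsilon=\mu$, $\zeta=\rho$'' before the shift, and the sign convention in the Hardy weight) are resolved later in your own text and mirror imprecisions already present in the paper.
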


\begin{proof} Put $\phi(x): =  \log_2 [1/\Omega (2^{-x})]$. 
Due to \eqref{condition(a)}--\eqref{condition(b)}, the function $\phi$ satisfies the following conditions 
\begin{equation} \label{condition(a')}
\phi (x) - \mu |x|_1
\ \le \ 
 \phi (x') - \mu |x'|_1 + \log_2 C_1, \ \ x \le x', \ x, x' \in \RRdp, 
\end{equation} 
and
\begin{equation} \label{condition(b')}
\phi (x) - \rho|x|_1
\ \ge \ 
\phi (x') - \rho|x'|_1 + \log_2 C_2, \ \ x \le x', \ x, x' \in \RRdp.   
\end{equation}
We also have
\begin{equation} \label{[B_1(f)=]}
B_1(f)
\ = \
\sum_{e \subset [d]}\, \biggl(\sum_{k \in \ZZdpe} 
 \left\{2^{\phi(k)}\omega_r^e(f,2^{-k})_p)\right\}^\theta \biggl)^{1/\theta}, 
\end{equation}
with the corresponding change to sup when $\theta = \infty$.
Fix a number $0 < \tau \le \min (p,1)$.

\noindent
{\it Assertion} (i):  From \eqref{condition(a')} we derive 
$\mu |k|_1 \le  \phi(k) + c,  \ k \in \ZZdp$, 
for some constant $c$. Hence, by Lemma \eqref{normequivalence[B_1]} and \eqref{[B_1(f)=]} we have 
$
\|f\|_{B^{\mu {\bf 1}}_p}
\ \le \ 
C\|f\|_{\BO}, \ f \in \BO,
$
for some constant $C$. Since for $\mu > 1/p$, $B^{\mu {\bf 1}}_p$ is compactly embedded into $C(\IId)$, the same holds for $\BO$. Take an arbitrary $f \in \BO$. Then $f$ can be treated as an element in $C(\IId)$.  
By Lemma 
\ref{lemma[representation]}  $f$ is represented as B-spline series 
\eqref{eq:B-splineRepresentation} converging in the norm of $L_\infty(\IId)$.
For $k \in {\ZZ}^d_+$, put 
\begin{equation*}
b_k := 2^{|k|_1/p} \|q_k (f)\|_p, \ \
 a_k 
\ := \ 
\biggl( \sum_{v \supset e}
 \left\{ 2^{|k|_1/p}\omega_{r}^v(f,2^{-k})_p \right\}^\tau \biggl)^{1/\tau}
\end{equation*}
 if $k \in {\NN}^d(e)$.
 By Lemma \ref{Lemma:IneqMixedq_k(f))} we have for $k \in {\ZZ}^d_+$,
\begin{equation*}
b_k 
\ \le \ 
C   \biggl( \sum_{s \ge k}^\infty  a_s ^{\tau} \biggl)^{1/\tau}
\ \le \
C \biggl(  \sum_{s \in {\ZZ}^d_+}\left(2^{- \delta |(k-s)_+|_1}a_s\right)^\tau \biggl)^{1/\tau},\ k \in \ZZdp,
\end{equation*}
for a fixed $\delta > \rho + 1/p$.
Let the function $\psi$ be defined by $\psi(k) = \phi(k) - |k|_1/p, \ k \in \ZZdp$. 
By the inequality $\mu > 1/p$, \eqref{condition(a')} and \eqref{condition(b')}, it is easy to see that 
\begin{equation} \nonumber
\psi(k)- \epsilon |k|_1
 \le  
\psi(k')- \epsilon |k'|_1 + \log_2 C_1,  \ k \le k', \ k, k' \in \ZZdp,
\end{equation}
and
\begin{equation} \nonumber
\psi (k) - \zeta|k|_1
\ge  
\psi (k') - \zeta|k'|_1 + \log_2 C_2,  \ k \le k', \ k, k' \in \ZZdp,   
\end{equation}
for $\epsilon < \mu - 1/p$ and $\zeta = \rho + 1/p$. Hence, applying Lemma \ref{lemma[HardyIneq]} gives 
\begin{equation*} 
B_2(f) \ = \ \|\{ b_k\}\|_{b_\theta^\psi}
\ \le \ 
 C \|\{ a_k\}\|_{b_\theta^\psi} \ \asymp \ B_1(f) \ \asymp \ \|f\|_{\BO}.
\end{equation*}

\noindent
{\it Assertion} (ii): 
For $k \in {\ZZ}^d_+$, define 
\begin{equation*}
b_k := \left( \sum_{v \supset e}
 \left\{ \omega_{r}^v(g,2^{-k})_p \right\}^\tau \right)^{1/\tau}, \ \
 a_k 
\ := \ 
\|g_k\|_p
\end{equation*}
if $k \in {\NN}^d(e)$. By Lemma \ref{Lemma:IneqMixedomega_{r}^e} we have
for any  $k \in {\ZZ}^d_+(e)$,
\begin{equation*} 
\omega_{r}^e(g,2^{-k})_p 
\ \le \ 
C_3   \biggl( \sum_{s\in {\ZZ}^d_+}
 \left\{ 2^{-\delta |(k-s)_+|_1} \| g_s\|_p \right\}^\tau \biggl)^{1/\tau},
\end{equation*}
where $\delta = \min(r, r - 1 + 1/p)$.
Therefore,
\begin{equation*} 
b_k
 \  \le \ C_4 \biggl(  \sum_{s \in {\ZZ}^d_+} 
\left(2^{- \delta |(k-s)_+|_1}a_s\right)^\tau \biggl)^{1/\tau},\ k \in \ZZdp.
\end{equation*}
Taking  $\zeta = \rho$ and $0 < \epsilon < \mu$, we obtain by \eqref{condition(a')} and \eqref{condition(b')}
\begin{equation} \nonumber
\phi(k)- \epsilon |k|_1
 \  \le \ 
\phi(k')- \epsilon |k'|_1 + \log_2 C_1, \ \ k \le k', \ k, k' \in \ZZdp,
\end{equation}
and
\begin{equation} \nonumber
\phi(k)- \zeta |k|_1
 \  \ge \ 
\phi(k')- \zeta |k'|_1 + \log_2 C_2, \ \ k \le k', \ k, k' \in \ZZdp.
\end{equation}
Applying Lemma \ref{lemma[HardyIneq]} we get
\begin{equation*} 
\|g\|_{\BO} \ \asymp \ B_1(g) \ \asymp \ \|\{ b_k\}\|_{b_\theta^\phi}
\ \le \ 
 C \|\{ a_k\}\|_{b_\theta^\phi} \ = \ B_4(g).
\end{equation*}
Assertion (ii) is proven.

\noindent
{\it Assertion} (iii): This assertion follows from Assertions (i) and (ii).
\end{proof}

From  Assertion (ii) in Theorem \ref{Theorem:Representation} we obtain

\begin{corollary} \label{Corollary[B-inequality]}
Let $\ 0 < p, \theta \le \infty$ and $\Omega$ satisfy the assumptions of Assertion (ii) in Theorem 
\ref{Theorem:Representation}.
Then for every $k \in \ZZdp$, we have
\begin{equation} \nonumber
\|g\|_{\BO}
\ \ll \
\|g\|_p/\Omega(2^{-k}), \ g \in \Sigma_r^d(k).
\end{equation} 
\end{corollary}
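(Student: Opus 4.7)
The plan is to apply Assertion (ii) of Theorem \ref{Theorem:Representation} to a trivial one-term expansion of $g$.

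Since $g \in \Sigma_r^d(k)$, by definition there exist coefficients $\{a_s\}_{s \in J_r^d(k)}$ with
\begin{equation*}
g \ = \ \sum_{s \in J_r^d(k)} a_s M^{(r)}_{k,s}.
\end{equation*}
I set $g_k := g$ and $g_{k'} := 0$ for all $k' \neq k$ in $\ZZdp$. Then the identity
\begin{equation*}
g \ = \ \sum_{k' \in \ZZdp} g_{k'} \ = \ \sum_{k' \in \ZZdp} \sum_{s \in J_r^d(k')} c_{k',s} M^{(r)}_{k',s}
\end{equation*}
holds in the form \eqref{series[g]} (with $c_{k,s} = a_s$ and $c_{k',s} = 0$ otherwise), and trivially it converges in any sense one desires, since it has only one nonzero term.

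Next I compute $B_4(g)$ for this particular expansion. Only the term at index $k$ survives, so
\begin{equation*}
B_4(g) \ = \ \left( \sum_{k' \in \ZZdp} \left\{ \|g_{k'}\|_p/\Omega(2^{-k'}) \right\}^\theta \right)^{1/\theta}
\ = \ \|g\|_p / \Omega(2^{-k}),
\end{equation*}
with the obvious modification when $\theta = \infty$. Since $\Omega$ is assumed to satisfy the hypotheses of Assertion (ii), i.e.\ $\rho < \min(r, r - 1 + 1/p)$, that assertion yields $\|g\|_{\BO} \ll B_4(g) = \|g\|_p / \Omega(2^{-k})$, which is the claim.

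There is essentially no obstacle here: the entire work has already been carried out in Assertion (ii) via Lemma \ref{Lemma:IneqMixedomega_{r}^e} and the Hardy inequality Lemma \ref{lemma[HardyIneq]}. One only has to observe that the corollary is the special case of a one-term B-spline expansion, for which $B_4$ collapses to a single summand and the hypothesis $\mu > 1/p$ is not needed (it was used only for the reverse direction in Assertion (i)).
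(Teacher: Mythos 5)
Your proof is correct and is exactly the argument the paper intends: the corollary is stated as an immediate consequence of Assertion (ii), obtained by viewing $g \in \Sigma_r^d(k)$ as a one-term series of the form \eqref{series[g]}, for which $B_4(g)$ collapses to the single summand $\|g\|_p/\Omega(2^{-k})$. Your side remark that only the hypothesis $\rho < \min(r, r-1+1/p)$ (and not $\mu > 1/p$) is needed is also consistent with the corollary's stated assumptions.
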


\begin{theorem} \label{Theorem:Representation[Ba]}
Let $\ 0 < p, \theta \le \infty$ and  $a \in \RRdp$. 
Then we have the following. 
\begin{itemize}
\item[(i)] If $1/p < \min_{j \in [d]} a_j \le \max_{j \in [d]} a_j < r$, then a function $f \in \Ba$ can be represented by the mixed B-spline series \eqref{eq:B-splineRepresentation}
satisfying the convergence condition
\begin{equation} \label{ConvegenceCondition[Ba]}
B_2(f) 
\ = \ 
\left( \sum_{k \in {\ZZ}^d_+} \ \{2^{(a,k)}\|q_k(f)\|_p\}^\theta\right)^{1/\theta} 
\ \ll \ 
\|f\|_{\Ba}.
\end{equation}

\item[(ii)]  If  $0 < \min_{j \in [d]} a_j \le \max_{j \in [d]} a_j < \min(r, r - 1 + 1/p)$,
then a function $g$ on ${\II}^d$ represented by a series \eqref{series[g]}
satisfying the condition
\begin{equation} \nonumber
B_4(g):= \  
\biggl(\sum_{k \in \ZZdp} \left\{2^{(a,k)}\|g_k\|_p \right\}^\theta \biggl)^{1/\theta} \ < \ \infty,
\end{equation}
belongs  
the  space $\Ba$.
Moreover, 
$
\|g\|_{\Ba}
\ \ll \ 
B_4(g).
$

\item[(iii)]  If $1/p < \min_{j \in [d]} a_j \le \max_{j \in [d]} a_j < \min(r, r - 1 + 1/p)$,
then a function $f$ on ${\II}^d$ belongs to 
the space $\Ba$ if and only if 
$f$ can be represented by the series \eqref{eq:B-splineRepresentation} satisfying the convergence condition
\eqref{ConvegenceCondition[Ba]}.
Moreover, the  quasi-norm $\|f\|_{\Ba}$ is equivalent to  the quasi-norms $B_2(f)$.
\end{itemize} 
\end{theorem}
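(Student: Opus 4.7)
The plan is to deduce Theorem \ref{Theorem:Representation[Ba]} as a direct specialization of Theorem \ref{Theorem:Representation} by taking $\Omega(t) = \prod_{i=1}^d t_i^{a_i}$, which is exactly the function used in the definition \eqref{def[Ba]} of $\Ba$. The work is entirely in matching hypotheses: once the assumptions of Theorem \ref{Theorem:Representation} are verified in this concrete case, the three assertions transfer. I would begin by checking the general conditions \eqref{conditionI}--\eqref{conditionIII} for this $\Omega$: positivity is immediate for $t > 0$, the monotonicity \eqref{conditionII} holds with $C = 1$ because $a_i \ge 0$, and the scaling bound \eqref{conditionIII} is satisfied with $C' = \prod_i \gamma_i^{a_i}$.

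Next I would identify the sharp values of $\mu$ and $\rho$ in the auxiliary conditions \eqref{condition(a)}--\eqref{condition(b)}. For $t \le t'$ in $\IId$ (so $0 < t_i \le t'_i \le 1$) and any exponent $\sigma \in \RR$, one has $t_i^\sigma \le (t'_i)^\sigma$ if and only if $\sigma \ge 0$. Consequently, the inequality $\prod_i t_i^{a_i - \mu} \le C_1 \prod_i (t'_i)^{a_i - \mu}$ is valid with $C_1 = 1$ exactly when $\mu \le \min_j a_j$, and the reverse inequality $\prod_i t_i^{a_i - \rho} \ge C_2 \prod_i (t'_i)^{a_i - \rho}$ is valid with $C_2 = 1$ exactly when $\rho \ge \max_j a_j$. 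Thus the largest admissible $\mu$ is $\min_j a_j$ and the smallest admissible $\rho$ is $\max_j a_j$. Translating the hypotheses of Theorem \ref{Theorem:Representation} through these identifications: $\mu > 1/p$ becomes $\min_j a_j > 1/p$, $\rho < r$ becomes $\max_j a_j < r$, and $\rho < \min(r, r - 1 + 1/p)$ becomes $\max_j a_j < \min(r, r - 1 + 1/p)$, which match precisely the assumptions of parts (i), (ii), (iii) respectively.

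Finally, I would observe that with $\Omega(2^{-k}) = 2^{-(a,k)}$, where $(a,k) = \sum_i a_i k_i$, the abstract quasi-norm $B_2(f)$ from Theorem \ref{Theorem:Representation} reduces to the explicit expression $\bigl(\sum_{k \in \ZZdp} \{2^{(a,k)} \|q_k(f)\|_p\}^\theta\bigr)^{1/\theta}$ stated in \eqref{ConvegenceCondition[Ba]}, and analogously for $B_4(g)$. Assertions (i), (ii), and (iii) then follow immediately from the corresponding assertions of Theorem \ref{Theorem:Representation}, together with Lemma \ref{lemma[representation]} for the concrete form of the B-spline series. I do not anticipate any substantive obstacle; the only point that requires care is the direction of monotonicity in $\Omega$ on $\IId$, which forces the choices $\mu = \min_j a_j$ and $\rho = \max_j a_j$ and therefore produces the sharp min/max conditions on the components of $a$ in the statement.
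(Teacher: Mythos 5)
Your proposal is correct and follows essentially the same route as the paper: the paper likewise specializes Theorem \ref{Theorem:Representation} to $\Omega(t)=\prod_{i=1}^d t_i^{a_i}$, verifying \eqref{conditionI}--\eqref{conditionIII} and \eqref{condition(a)}--\eqref{condition(b)} with $\mu$ chosen in $(1/p,\min_j a_j)$ and $\rho=\max_j a_j$, and then reads off $1/\Omega(2^{-k})=2^{(a,k)}$ to get the stated form of $B_2$ and $B_4$. Your identification of the admissible ranges of $\mu$ and $\rho$ via the monotonicity of $t\mapsto t^\sigma$ on $(0,1]$ is exactly the ``direct verification'' the paper leaves to the reader.
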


\begin{proof} For $\Omega$ as in \eqref{def[Ba]}, we have
$1/\Omega (2^{-x}) \ = \ 2^{(a,x)},\ \  x \in \RRdp$. 
One can directly verify the conditions \eqref{conditionI}--\eqref{conditionIII}  and the conditions \eqref{condition(a)}--\eqref{condition(b)} with 
$1/p < \mu < \min_{j \in [d]} a_j$ and $\rho = \max_{j \in [d]} a_j$, for $\Omega$ defined in \eqref {def[Ba]}.
Applying Theorem \ref{Theorem:Representation}(i), we obtain the assertion (i). 

The assertion (ii) can be proven in a similar way. The assertion (iii) follows from the assertions (i) and (iii).
\end{proof}

\begin{theorem} \label{Theorem:Representation[Bab]}
Let $\ 0 < p, \theta \le \infty$ and $\alpha \in \RR_+$, $\beta \in \RR$. 
Then we have the following. 
\begin{itemize}
\item[(i)] If $1/p < \min (\alpha,\alpha + \beta) \le  \max(\alpha,\alpha + \beta) < r$, then a function $f \in \Bab$ can be represented by the mixed B-spline series \eqref{eq:B-splineRepresentation}
satisfying the convergence condition
\begin{equation} \label{ConvegenceCondition[Bab]}
B_2(f) 
\ = \ 
\left( \sum_{k \in {\ZZ}^d_+} \ \{2^{\alpha |k|_1 + \beta |k|_\infty}\|q_k(f)\|_p\}^\theta\right)^{1/\theta} 
\ \ll \ 
\|f\|_{\Bab}.
\end{equation}

\item[(ii)]  If  $0 < \min (\alpha,\alpha + \beta) \le \max(\alpha,\alpha + \beta) < \min(r, r - 1 + 1/p)$,
then a function $g$ on ${\II}^d$ represented by a series \eqref{series[g]}
satisfying the condition
\begin{equation} \nonumber
B_4(g):= \  
\biggl(\sum_{k \in \ZZdp} \left\{2^{\alpha |k|_1 + \beta |k|_\infty}\|g_k\|_p \right\}^\theta \biggl)^{1/\theta} \ < \ \infty,
\end{equation}
belongs  
the  space $\Bab$.
Moreover, 
$
\|g\|_{\Bab}
\ \ll \ 
B_4(g).
$

\item[(iii)]  If  $1/p < \min (\alpha,\alpha + \beta) \le \max(\alpha,\alpha + \beta) < \min(r, r - 1 + 1/p)$,
then a function $f$ on ${\II}^d$ belongs to 
the space $\Bab$ if and only if 
$f$ can be represented by the series \eqref{eq:B-splineRepresentation} satisfying the convergence condition
\eqref{ConvegenceCondition[Bab]}.
Moreover, the  quasi-norm $\|f\|_{\Bab}$ is equivalent to  the quasi-norms $B_2(f)$.
\end{itemize} 
\end{theorem}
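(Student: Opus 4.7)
The plan is to reduce all three assertions to the corresponding parts of Theorem \ref{Theorem:Representation} in exactly the same way the proof of Theorem \ref{Theorem:Representation[Ba]} handled the space $\Ba$: verify that $\Omega$ from \eqref{def[Bab]} satisfies the structural conditions \eqref{conditionI}--\eqref{conditionIII} together with the two-sided monotonicity bounds \eqref{condition(a)}--\eqref{condition(b)}, and then invoke Theorem \ref{Theorem:Representation}(i), (ii), (iii) respectively. The whole argument is carried out through the discrete identity
\begin{equation*}
1/\Omega(2^{-x}) \ = \ 2^{\alpha|x|_1 + \beta|x|_\infty}, \quad x \in \RRdp,
\end{equation*}
which is valid for both signs of $\beta$ and was already noted before the statement.

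First I would record the elementary inequality $0 \le |x|_\infty - |x'|_\infty \le |x|_1 - |x'|_1$ for $x \ge x' \ge 0$, which is the single fact that drives every verification. Positivity \eqref{conditionI} is obvious and the dilation invariance \eqref{conditionIII} for fixed $\gamma$ is routine. For monotonicity \eqref{conditionII}, in the $\beta \ge 0$ case both $\alpha|x|_1$ and $\beta|x|_\infty$ are non-decreasing; in the $\beta < 0$ case one uses the elementary inequality together with $\alpha + \beta > 0$ to obtain $\alpha(|x|_1-|x'|_1) - |\beta|(|x|_\infty-|x'|_\infty) \ge (\alpha-|\beta|)(|x|_1-|x'|_1) \ge 0$.

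The core of the proof is the choice of $\mu$ and $\rho$ in \eqref{condition(a)}--\eqref{condition(b)}. In the discrete formulation \eqref{condition(a)} becomes $(\alpha-\mu)(|x|_1-|x'|_1) + \beta(|x|_\infty-|x'|_\infty) \ge -\log_2 C_1$ for $x \ge x'$. For $\beta \ge 0$ the $\beta$-term is non-negative, so any $\mu < \alpha$ works; for $\beta < 0$ the elementary inequality reduces the required bound to $(\alpha + \beta - \mu)(|x|_1-|x'|_1) \ge 0$, so any $\mu < \alpha + \beta$ works. In both cases the hypothesis $1/p < \min(\alpha,\alpha+\beta)$ lets us choose $\mu > 1/p$. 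Similarly \eqref{condition(b)} becomes $(\rho-\alpha)(|x|_1-|x'|_1) - \beta(|x|_\infty-|x'|_\infty) \ge \log_2 C_2$, which via the same elementary inequality is guaranteed by $\rho \ge \alpha + \beta$ when $\beta \ge 0$ and by $\rho \ge \alpha$ when $\beta < 0$; that is, by $\rho \ge \max(\alpha, \alpha+\beta)$. The upper constraint on $\rho$ in Theorem \ref{Theorem:Representation} (namely $\rho < r$ for (i) and $\rho < \min(r, r-1+1/p)$ for (ii), (iii)) is exactly the stated hypothesis on $\max(\alpha,\alpha+\beta)$ in each assertion.

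With $\mu$ and $\rho$ fixed, assertion (i) is the direct image under Theorem \ref{Theorem:Representation}(i), assertion (ii) follows from Theorem \ref{Theorem:Representation}(ii), and assertion (iii) is the conjunction of (i) and (ii). There is no serious obstacle; the only mild subtlety is the need to treat $\beta \ge 0$ and $\beta < 0$ together, which is precisely what the inequality $|x|_\infty - |x'|_\infty \le |x|_1-|x'|_1$ accomplishes, so the two cases collapse into a single chain of estimates.
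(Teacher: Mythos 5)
Your proposal is correct and follows essentially the same route as the paper: reduce to Theorem \ref{Theorem:Representation} by checking the conditions \eqref{conditionI}--\eqref{conditionIII} and \eqref{condition(a)}--\eqref{condition(b)} for $\phi(x)=\alpha|x|_1+\beta|x|_\infty$, with $\mu$ chosen in $(1/p,\min(\alpha,\alpha+\beta))$ and $\rho$ in $[\max(\alpha,\alpha+\beta),r)$ (resp.\ below $\min(r,r-1+1/p)$). The only difference is cosmetic: the paper treats $\beta\ge 0$ and $\beta<0$ as separate cases, using a coordinate-by-coordinate comparison for $\beta<0$, whereas your single inequality $0\le |x|_\infty-|x'|_\infty\le |x|_1-|x'|_1$ for $x\ge x'\ge 0$ collapses both cases into one estimate — a valid and slightly cleaner verification of the same conditions.
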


\begin{proof}
As mentioned above, for $\Omega$ as in \eqref{def[Bab]}, we have
$1/\Omega (2^{-x}) \ = \ 2^{\alpha |x|_1 + \beta |x|_\infty}, \  x \in \RRdp$. By Theorem 
\ref{Theorem:Representation}, the assertion (i) of the theorem is proven if  the conditions 
\eqref{conditionI}--\eqref{conditionIII}  and  \eqref{condition(a)}--\eqref{condition(b)} with some $\mu > 1/p$ and $\rho < r$, are verified. The condition  \eqref{conditionI} is obvious. 
Put
$
\phi(x): =  \log_2 \{1/\Omega (2^{-x})\}
\ = \ 
\alpha |x|_1 + \beta |x|_\infty,
\ \  x \in \RRdp. 
$
Then the conditions \eqref{conditionII}--\eqref{conditionIII}  and  
\eqref{condition(a)}--\eqref{condition(b)} are equivalent to the following conditions for the function $\phi$, 
\begin{equation} \label{conditionII'}
\phi (x)
\ \le \ 
 \phi (x')+ \log_2 C, \ \ x \le x', \ x, x' \in \RRdp; 
\end{equation}
for every $b \le \log_2 \gamma := (\log_2 \gamma_1,..., \log_2 \gamma_d)$, 
\begin{equation} \label{conditionIII'}
\phi (x + b)
\ \le \ 
\phi (x) + \log_2 C', \ \ x,  x + b \in \RRdp;   
\end{equation}
\begin{equation} \label{condition(a")}
\phi (x) - \mu |x|_1
\ \le \ 
 \phi (x') - \mu |x'|_1 + \log_2 C_1, \ \ x \le x', \ x, x' \in \RRdp; 
\end{equation} 
\begin{equation} \label{condition(b")}
\phi (x) - \rho|x|_1
\ \ge \ 
\phi (x') - \rho|x'|_1 + \log_2 C_2, \ \ x \le x', \ x, x' \in \RRdp.   
\end{equation}

We first consider the case $\beta \ge 0$. Take $\mu$ and $\rho$ with the conditions 
 $1/p < \mu < \alpha$ and $\rho = \alpha + \beta$. The conditions 
\eqref{conditionII'}--\eqref{conditionIII'}  can be easily verified. From the inequality $\alpha -\mu > 0$ and the equation
\begin{equation} \label{eq[phi-mu]}
\phi (x) - \mu |x|_1
\ = \ 
(\alpha -\mu) |x|_1 + \beta |x|_\infty, \ x \in \RRdp. 
\end{equation}
follows  \eqref{condition(a")}. We have  
\begin{equation} \nonumber
\phi (x) - \rho |x|_1
\ = \ 
\beta (|x|_\infty - |x|_1)
\ = \ 
- \beta \min_{j \in [d]} \ \sum_{i \not=j} x_i, \ x \in \RRdp. 
\end{equation}
Hence, we deduce \eqref{condition(b")}.

Let us next consider the case $\beta < 0$. The condition \eqref{conditionIII'} is obvious.  
Take $\mu$ and $\rho$ with the conditions $1/p < \mu < \alpha + \beta$ and $\alpha < \rho < r$. Let 
$x \le x', \ x, x' \in \RRdp$. Assume that $|x|_\infty = x_j$ and $|x'|_\infty = x'_{j'}$. By using the 
inequalities $x_{j'} \le x_j \le x'_j \le x'_{j'}$ and $\alpha - \mu > \alpha + \beta - \mu > 0$ from 
\eqref{eq[phi-mu]} we get 
\begin{equation} \nonumber
\begin{aligned} 
\phi (x) - \mu |x|_1
\ &= \ 
(\alpha -\mu) \sum_{i \not= j,j'} x_i + (\alpha + \beta -\mu) x_j + (\alpha - \mu) x_{j'} \\ 
\ &\le \ 
(\alpha -\mu) \sum_{i \not= j,j'} x'_i + (\alpha + \beta -\mu) x'_{j'} + (\alpha - \mu) x'_j \\ 
\ &= \ 
\phi (x') - \mu |x'|_1.
\end{aligned} 
\end{equation}
The inequality \eqref{condition(a")} is proven. The inequality \eqref{conditionII'} and \eqref{condition(b")} can be proven analogously. Instead the inequalities $\alpha - \mu > \alpha + \beta - \mu > 0$,  in the proof we should use $\alpha > \alpha + \beta > 0$ and $\alpha + \beta - \rho < \alpha  - \rho < 0$, respectively. Thus, the assertion (i) 
is proven.

The assertion (ii) can be proven in a similar way. The assertion (iii) follows from the assertions (i) and (iii).
\end{proof}

\begin{lemma} \label{Lemma[quasi-norm<]}  
Let $0 < p, \theta \le \infty$, $0 < \beta < \min (r, r - 1 + 1/p)$ and $i \in [d]$. 
Then for every function $g \in L_p(\IId)$ represented by  the  series \eqref{representationby[Sigma_{r,i}^d(k)_p]},
there holds the inequality 
\begin{equation*} 
\biggl(\sum_{k \in \ZZ_+}\left\{2^{\beta k} \omega_{r,i}(g,2^{-k})_p\right\}^\theta \biggl)^{1/\theta} 
\ \ll \ 
\left( \sum_{k\in \ZZ_+}
 \left\{2^{\beta k} \| g_k\|_p \right\}^\theta \right)^{1/\theta}
\end{equation*}
with the usual change to sup when $\theta = \infty$, whenever the sum on the right-hand side is finite.
\end{lemma}

\begin{proof}
This lemma can be derived from Lemma \ref{Lemma:IneqMixedomega_{r,i}} by applying Lemma \ref{lemma[HardyIneq]} to the sequences
$b_k :=\omega_{r,i}(g,2^{-k})_p$ and $a_k:= \| g_k\|_p$, $k \in \ZZ_+$, with  
$d=1$, $\psi(x):= 2^{\beta x}$, $\tau := \min(p,1)$, $\delta:= \min (r, r - 1 + 1/p)$.
\end{proof}

\noindent
{\bf Remark} \  Theorem \ref{Theorem:Representation[Ba]} for $a = \alpha {\bf 1}$ and Theorem 
\ref{Theorem:Representation[Bab]} for $\beta = 0$ coincide. This particular case has been proven in \cite{Di11}. Some modifications of the space $\BO$ were introduced in \cite{Di86} where approximations by trigonometric polynomials with frequencies from hyperbolic cross, $n$-widths were investigated from functions from these spaces.

There are many examples of function $\Omega$ for the space $\BO$ for which Theorem \ref{Theorem:Representation} is true with some light natural restrictions and to which it is interesting to extend the results of the present paper. Let us give some important ones of them.
 For  bounded sets $A,B$ with $B \subset A \subset \RRdp$, 
\begin{equation} \label{Omega-AB}
\Omega (t)= \inf_{x \in A}  \prod_{i=1}^d t_i^{x_i} \, \sup_{y \in B}  \prod_{i=1}^d t_i^{y_i}.
\end{equation}
 For univariate functions $\Omega_j, \ j \in [d]$, satisfying the conditions 
\eqref{conditionI}--\eqref{conditionIII},
\[
\Omega(t)= \prod_{j \in [d]}\Omega_j(t_j).
\]
 For $a \in \RRdp$ and a univariate function $\Omega^*$ satisfying the conditions 
\eqref{conditionI}--\eqref{conditionIII},
\[
\Omega(t)= \Omega^* ( \prod_{i=1}^d t_i^{a_i}).
\]
An important case of \eqref{Omega-AB} is the case $\Omega (t):= \inf_{x \in A}  \prod_{i=1}^d t_i^{x_i}$ where 
$A \subset \RRdp$ is a finite set. In this case,  
\[
\BO \ = \ \bigcap_{a \in A} \Ba 
\]
which is a Besov space having several different nonuniform mixed smoothnesses $a \in A$, see \cite{Di84, Di86}.

\section{Sampling recovery} 
\label{Sampling recovery}

\medskip
Let $\Delta \subset \ZZdp$ be given. Put $K(\Delta) :=  \{(k,s): k \in \Delta,\ s \in I^d(k)\}$ and  
denote by $M_r^d(\Delta)$ the set of B-spines 
$M^{(r)}_{k,s}$, $k \in \Delta$, $s \in J_r^d(k)$. 
We define the operator $R_\Delta$ for functions $f$ on $\IId$ by 
\begin{equation} \nonumber
R_\Delta(f) 
:= \ 
\sum_{k \in \Delta} q_k(f)
\ = \ 
\sum_{k \in \Delta} \sum_{s \in J_r^d(k)} c^{(r)}_{k,s}(f)M^{(r)}_{k,s}, 
\end{equation}
and the grid $G(\Delta)$ of points in $\IId$ by
\begin{equation} \nonumber
G(\Delta)
 := \
\{ 2^{-k}s: (k,s) \in K(\Delta)\}. 
\end{equation} 

\begin{lemma} \label{Lemma:R_n=L_n}
The operator $R_\Delta$ defines a linear sampling algorithm 
of the form \eqref{def[L_n]} on the grid $G(\Delta)$. More precisely, 
\begin{equation} \nonumber
R_\Delta(f) 
\ = \ 
L_n(X_n,\Phi_n, f) 
\ = \ 
\sum_{(k,s) \in K(\Delta)}  f(2^{-k}j) \psi_{k,s}, 
\end{equation} 
where $X_n := G(\Delta)= \{2^{-k}s\}_{(k,s) \in K(\Delta)}$, $\Phi_n:= \{ \psi_{k,j}\}_{(k,s) \in K(\Delta)}$,
\begin{equation} \nonumber
n := 
|G(\Delta)|
\ = \
\sum_{k \in \Delta} \prod_{j=1}^d (2^{k_j} + 1),
\end{equation} 
and $\psi_{k,j}$ are explicitly constructed as linear combinations of at most $N$  B-splines 
$M^{(r)}_{k,s} \in  M_r^d(\Delta)$ for some $N \in \NN$ which is independent of $k,j,\Delta$ and $f$.
\end{lemma}

\begin{proof}
This lemma can be proven in a way similar to the proof of \cite[Lemma 3.1]{Di11}.
\end{proof}

Let $\psi: \ZZdp \to \RR_+$. Denote by $\Bpsi$ the space of all functions $f$ on $\IId$  for which the following quasi-norm is finite 
\begin{equation*} 
\|f\|_{\Bpsi}
:= \ 
\biggl( \sum_{k \in {\ZZ}^d_+} \ \{2^{\psi(k)}\|q_k(f)\|_p\}^\theta\biggl)^{1/\theta}. 
\end{equation*}

\begin{lemma} \label{Lemma[|f - R_Delta(f)|_q<]}
Let $0 < p, \theta, q \le \infty$ and  $\psi: \ZZdp \to \RR_+$.  Then for every  $f \in \Bpsi$, we have the following. 
\begin{itemize}
\item[{\rm (i)}] For $p \ge q$,
\begin{equation*} 
\|f - R_\Delta(f)\|_q 
\ \ll \ 
\|f\|_{\Bpsi}
\begin{cases}
\displaystyle
\sup_{k \in \ZZdp \setminus \Delta} 2^{-\psi(k)}, \ & \theta \le \min(p,1), \\
\left(\sum_{k \in \ZZdp \setminus \Delta} \{2^{-\psi(k)}\}^{\theta^*}\right)^{1/\theta^*}, \ & \theta > \min(p,1),
\end{cases}
\end{equation*}
where $\theta^*:= (1/ \min(p,1) - 1/\theta)^{-1}$.
\item[{\rm (ii)}] For $p < q <\infty$, 
\begin{equation*} 
\|f - R_\Delta(f)\|_q 
\ \ll \ 
\|f\|_{\Bpsi}
\begin{cases}
\displaystyle
\sup_{k \in \ZZdp \setminus \Delta} 2^{-\psi(k) + (1/p - 1/q) |k|_1}, \ & \theta \le q, \\
\left(\sum_{k \in \ZZdp \setminus \Delta} \{2^{-\psi(k) + (1/p - 1/q) |k|_1} \}^{q^*}\right)^{1/q^*}, \ & \theta > q,
\end{cases}
\end{equation*}
where $q^*:= (1/q - 1/\theta)^{-1}$.
\item[{\rm (iii)}] For $p < q = \infty$,
\begin{equation*}
\|f - R_\Delta(f)\|_\infty 
\ \ll \ 
\|f\|_{\Bpsi}
\begin{cases}
\displaystyle
\sup_{k \in \ZZdp \setminus \Delta} 2^{-\psi(k) + |k|_1/p}, \ & \theta \le 1 \\
\left(\sum_{k \in \ZZdp \setminus \Delta} \{2^{-\psi(k) + |k|_1/p} \}^{\theta'}\right)^{1/\theta'}, \ & \theta > 1,
\end{cases}
\end{equation*}
where $\theta':= (1 - 1/\theta)^{-1}$.
\end{itemize} 
\end{lemma}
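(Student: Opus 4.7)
The plan is to begin from the identity
\[
f - R_\Delta(f) \ = \ \sum_{k \in \ZZdp \setminus \Delta} q_k(f),
\]
which follows immediately from the B-spline representation in Lemma \ref{lemma[representation]} together with the definition of $R_\Delta$. Since each $q_k(f)$ lies in $\Sigma_r^d(k)$, controlling $\|f-R_\Delta(f)\|_q$ reduces to estimating the $L_q$-norm of a sum of dyadic-level pieces whose $L_p$-norms are encoded in $\|f\|_{\Bpsi}$ through the weights $2^{-\psi(k)}$.

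The first ingredient is the per-level Nikolskii-type inequality
\[
\|q_k(f)\|_q \ \ll \ 2^{(1/p - 1/q)_+ |k|_1}\,\|q_k(f)\|_p,
\]
which for $p < q$ follows from the B-spline stability \eqref{eq:StabIneq} applied at both scales $p$ and $q$ (with coefficient sequences compared in $\ell^p$ and $\ell^q$), and for $p \ge q$ is just the trivial embedding $L_p(\IId) \hookrightarrow L_q(\IId)$ on the unit cube. The second ingredient is an appropriate dyadic-level quasi-triangle inequality in $L_q$: in case (i) with $p \ge q$, the standard $L_q$-quasi-triangle with exponent $\min(q,1)$ of \eqref{ineq:L_p(IId)Norm}; in case (iii) with $q=\infty$, the trivial sup-norm triangle with exponent $1$; and in case (ii) with $p<q<\infty$, a sharper summation
\[
\left\|\sum_k g_k\right\|_q \ \ll \ \left(\sum_k \|g_k\|_q^{\,q}\right)^{1/q}, \quad g_k \in \Sigma_r^d(k),
\]
available for B-spline sums at different dyadic levels as a consequence of the bounded overlap of the basis together with the Nikolskii inequality, sharper than the naive $\min(q,1)$-triangle. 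After pulling out the dyadic factor $2^{(1/p-1/q)_+|k|_1}$, I would apply H\"older's inequality to split the residual weight $2^{-\psi(k)+(1/p-1/q)_+|k|_1}$ from the sequence $\{2^{\psi(k)}\|q_k(f)\|_p\}_{k\in\ZZdp}$, whose $\ell^\theta$-norm is bounded by $\|f\|_{\Bpsi}$.

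The three thresholds $\theta \le \min(q,1)$, $\theta \le q$, $\theta \le 1$ respectively mark the regime where H\"older degenerates into an $\ell^\infty$-pairing (producing the supremum expressions in each case), while in the complementary regime the dual $\ell^{\theta^*}$, $\ell^{q^*}$, $\ell^{\theta'}$ norm of the residual weights appears, with the exponents computed from $1/\theta^\bullet = 1/t - 1/\theta$ for $t = \min(q,1), q, 1$ respectively. The main obstacle will be justifying the refined dyadic-level triangle inequality in case (ii): obtaining exponent $q$ in the cross-level summation rather than the naive $\min(q,1)$ requires genuinely exploiting the atomic/bounded-overlap structure of the B-spline basis on $\IId$ together with the Nikolskii inequality, and is not a formal consequence of the $L_q$ quasi-triangle alone. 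Cases (i) and (iii) then follow mechanically by combining the standard quasi-triangle with H\"older and the embedding $\ell^\theta\hookrightarrow \ell^t$ when $\theta\le t$.
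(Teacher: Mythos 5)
Your proposal is correct and follows essentially the same route as the paper: decompose $f - R_\Delta(f) = \sum_{k \in \ZZdp\setminus\Delta} q_k(f)$, apply the appropriate level-wise triangle and Nikolskii-type inequalities, and then separate the weights $2^{-\psi(k)+(1/p-1/q)_+|k|_1}$ from the sequence $\{2^{\psi(k)}\|q_k(f)\|_p\}$ via the embedding $\ell^\theta\hookrightarrow\ell^t$ when $\theta\le t$ or H\"older's inequality otherwise. The one step you flag as the main obstacle --- the refined cross-level bound $\|\sum_k g_k\|_q^q \ll \sum_k \{2^{(1/p-1/q)|k|_1}\|g_k\|_p\}^q$ in case (ii) --- is precisely what the paper imports as Lemma 5.3 of \cite{Di11} rather than reproving, so your identification of where the genuine work lies is accurate.
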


\begin{proof} \\
{\it Case} (i): $p \ge q$. Since $\| \cdot\|_q \le \|\cdot\|_p$, it is sufficient to prove this case for $q=p$.
For an arbitrary $f \in \Bpsi$, by the representation \eqref{eq:B-splineRepresentation}
we have
\begin{equation*}
\|f - R_\Delta(f)\|_p^{\tau} 
\  \ll \ 
\sum_{k \in \ZZdp \setminus \Delta} \| q_k(f)\|_p^{\tau}
\end{equation*}
with any $\tau \le \min(p,1)$. Therefore, if $\theta \le \min(p,1)$, then
by Theorem \ref{Theorem:Representation[Bab]} we get 
\begin{equation} \label{error[f - R_Delta(f)(1)]}
\begin{aligned}
\|f - R_\Delta(f)\|_p  
\  & \ll \ 
 \biggl(\sum_{k \in \ZZdp \setminus \Delta} \| q_k(f)\|_p^{\theta} \biggl)^{1/\theta} \\
\  & \le \ 
\sup_{k \in \ZZdp \setminus \Delta} 2^{-\psi(k)}
\biggl(\sum_{k \in \ZZdp \setminus \Delta} 
\{2^{\psi(k)} \| q_k(f)\|_p\}^{\theta} \biggl)^{1/\theta} \\
 \  & \le \ 
\|f\|_{\Bpsi} \sup_{k \in \ZZdp \setminus \Delta} 2^{-\psi(k)}. 
\end{aligned}
\end{equation}
If $\theta > \min(p,1)$, then 
\begin{equation} \nonumber
\|f - R_\Delta(f)\|_p^\nu
\  \ll \ 
\sum_{k \in \ZZdp \setminus \Delta }\| q_k(f)\|_p^\nu
 \  = \ 
 \sum_{k \in \ZZdp \setminus \Delta } 
\{ 2^{\psi(k)} \| q_k(f)\|_p\}^\nu 
\{2^{- \psi(k)}\}^\nu,
\end{equation}
where $\nu = \min(p,1)$.  Since $\nu/\theta + \nu/\theta^* = 1$, by H\"older's inequality with exponents 
$\theta/\nu, \theta^*/\nu$ and 
Theorem \ref{Theorem:Representation[Bab]} we obtain
\begin{equation} \label{error[f - R_Delta(f)(2)]}
\begin{aligned}
\|f - R_\Delta(f)\|_p
\ & \ll \ 
\biggl( \sum_{k \in \ZZdp \setminus \Delta} 
\{2^{\psi(k)} \| q_k(f)\|_p\}^\theta\biggl)^{1/\theta} 
\biggl(\sum_{k \in \ZZdp \setminus \Delta} 
\{2^{-\psi(k)}\}^{\theta^*}\biggl)^{1/\theta^*} \\
\ & \ll \ 
\|f\|_{\Bpsi} 
\biggl(\sum_{k \in \ZZdp \setminus \Delta} \{2^{-\psi(k)}\}^{\theta^*}\biggl)^{1/\theta^*}. 
\end{aligned}
\end{equation}
This and \eqref{error[f - R_Delta(f)(1)]} prove Case (i).

\noindent
{\it Case} (ii): $p < q < \infty$.
For an arbitrary $f \in \Bpsi$, by the representation \eqref{eq:B-splineRepresentation} 
and \cite[Lemma 5.3]{Di11} have
\begin{equation*} 
\|f - R_\Delta(f)\|_q^q 
\  \ll \ 
 \sum_{k \in \ZZdp \setminus \Delta} \{2^{(1/p - 1/q) |k|_1} \| q_k(f)\|_p\}^q.
\end{equation*}
 Therefore, if $\theta \le q$, then
\begin{equation*}
\begin{aligned} 
\|f - R_\Delta(f)\|_q 
\ & \ll \ 
 \biggl(\sum_{k \in \ZZdp \setminus \Delta}\{2^{(1/p - 1/q) |k|_1} \| q_k(f)\|_p\}^{\theta} \biggl)^{1/\theta} \\
 \ & \ll \ 
\sup_{k \in \ZZdp \setminus \Delta} 2^{-\psi(k) + (1/p - 1/q) |k|_1} 
\biggl(\sum_{k \in \ZZdp \setminus \Delta}\{2^{\psi(k)} \| q_k(f)\|_p\}^{\theta} \biggl)^{1/\theta} \\
 \ &\ll \ 
\|f\|_{\Bpsi}
\sup_{k \in \ZZdp \setminus \Delta} 2^{-\psi(k) + (1/p - 1/q) |k|_1}.
 \end{aligned}
\end{equation*}
If $\theta > q$, then
\begin{equation*}
\begin{aligned} 
\|f - R_\Delta(f)\|_q^q 
\ & \ll \ 
\sum_{k \in \ZZdp \setminus \Delta }\{2^{(1/p - 1/q) |k|_1} \| q_k(f)\|_p\}^q \\
 \ & = \ 
 \sum_{k \in \ZZdp \setminus \Delta }\{2^{\psi(k)}\| q_k(f)\|_p\}^q\{2^{- \psi(k) +(1/p - 1/q)|k|_1}\}^q.
 \end{aligned}
\end{equation*}
Hence, similarly to \eqref{error[f - R_Delta(f)(2)]}, we get
\begin{equation*}
\|f - R_\Delta(f)\|_q 
\ \ll \ 
\|f\|_{\Bpsi} 
\biggl(\sum_{k \in \ZZdp \setminus \Delta} \{2^{-\psi(k) + (1/p - 1/q) |k|_1}\}^{q^*}\biggl)^{1/q^*}.
 \end{equation*}
 This completes the proof of Case (ii).

\noindent
{\it Case} (iii): $p < q = \infty$. Case (iii) can be proven analogously to Case (ii) by using the inequality
\begin{equation*} 
\|f - R_\Delta(f)\|_\infty 
\  \ll \ 
 \sum_{k \in \ZZdp \setminus \Delta} 2^{|k|_1/p} \| q_k(f)\|_p.
\end{equation*}
\end{proof}

Denote by $\RRbp$ the set of triples $(p,\theta,q)$ such that $0 < p,\theta,q  \le \infty$.
According to Lemma \ref{Lemma[|f - R_Delta(f)|_q<]}, depending on the relationship between 
$p,\theta,q$ for $(p,\theta,q) \in\RRbp$, the error $\|f - R_\Delta(f)\|_q$ of the approximation of $f \in \Bpsi$ has  an upper bound of two different forms: either
\begin{equation} \label{ineq[sup-error]}
\|f - R_\Delta(f)\|_q 
\ \ll \ 
\|f\|_{\Bpsi}
\sup_{k \in \ZZdp \setminus \Delta} 2^{-\psi(k) + (1/p - 1/q)_+ |k|_1},
\end{equation}
or for some $0 < \tau < \infty$,
\begin{equation} \label{ineq[sum-error]}
\|f - R_\Delta(f)\|_q 
\ \ll \ 
\|f\|_{\Bpsi}
\biggl(\sum_{k \in \ZZdp \setminus \Delta} \{2^{-\psi(k)+ (1/p - 1/q)_+ |k|_1}\}^\tau\biggl)^{1/\tau}.
\end{equation}

Let us decompose $\RRbp$ into two sets $A$ and $B$ with $A \cap B = \varnothing$ as follows. A triple 
$(p,\theta,q) \in\RRbp$ belongs to $A$ if and only if for $(p,\theta,q)$ there holds \eqref{ineq[sup-error]}, and belongs to $B$ if and only if for $(p,\theta,q)$ there holds \eqref{ineq[sum-error]}. 
By Lemma \ref{Lemma[|f - R_Delta(f)|_q<]}, we can see that
\[
A = \{p \ge q, \ \theta \le \min(p,1)\}\cup \{p < q, \ \theta \le q\}\cup \{p < q = \infty, \ \theta \le 1\},
\] 
and 
\[
B = \{p \ge q, \ \theta > \min(p,1)\}\cup \{p < q, \ \theta > q\}\cup \{p < q = \infty, \ \theta > 1\}.
\] 

We construct special sets $\Delta(\xi)$ parametrized by $\xi > 0$, for the recovery of functions $f \in \Uab$ by $R_{\Delta(\xi)}(f)$.  Let $0 < p, \theta, q \le \infty$ and $\alpha \in \RR_+$, $\beta \in \RR$ be given.
We fix a number $\varepsilon$ so that
\begin{equation*} 
0 < \varepsilon < \min (\alpha - (1/p - 1/q)_+, |\beta|),
\end{equation*} 
 and define the set $\Delta (\xi)$ for $\xi > 0$ by
\begin{equation*} 
\Delta (\xi)
:= \
\begin{cases}
\{k \in \ZZdp: (\alpha - (1/p - 1/q)_+)|k|_1 + \beta |k|_\infty \le \xi\}, \ & (p,\theta,q) \in A, \\
\{k \in \ZZdp: (\alpha - (1/p - 1/q)_+ + \varepsilon/d)|k|_1 + (\beta - \varepsilon) |k|_\infty\le \xi\},
 \ & (p,\theta,q) \in B, \ \beta > 0, \\
\{k \in \ZZdp: (\alpha - (1/p - 1/q)_+ - \varepsilon)|k|_1 + (\beta + \varepsilon) |k|_\infty\le \xi\},
 \ & (p,\theta,q) \in B, \ \beta < 0.
\end{cases}
\end{equation*}

Preliminarily note that for $(p,\theta,q) \in A$, $\Delta(\xi)$ is defined as the set
$\{k \in \ZZdp: (\alpha - (1/p - 1/q)_+)|k|_1 + \beta |k|_\infty \le \xi\}$, but for $(p,\theta,q) \in B$,  $\Delta(\xi)$ is defined as an extension of the last one parametrized by $\varepsilon$. We will give a detailed comment on this substantial difference in a remark at the end of the next section where the optimality and sparsity are investigated.

\begin{theorem} \label{Theorem[UpperBoundUab]}
Let $\ 0 < p, \theta, q \le \infty$ and $\alpha \in \RR_+$, $\beta \in \RR$, $\beta \not=0$, such that
\begin{equation} \nonumber
1/p < \min (\alpha,\alpha + \beta) \le  \max(\alpha,\alpha + \beta) < r. 
\end{equation}
Then we have the following upper bound
\begin{equation}  \label{Ineq[UpperBoundUab]}
\sup_{f \in \Uab} \ \|f - R_{\Delta(\xi)}(f)\|_q 
\ \ll \ 
2^{-\xi}.
\end{equation}
\end{theorem}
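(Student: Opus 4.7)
The proof will combine the B-spline quasi-interpolation representation for $\Bab$ (Theorem~\ref{Theorem:Representation[Bab]}) with the generic sampling error bound (Lemma~\ref{Lemma[|f - R_Delta(f)|_q<]}), after which the choice of $\Delta(\xi)$ is designed precisely to make the resulting bound collapse to $2^{-\xi}$.

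\textbf{Setup.} The hypothesis on $\alpha,\beta,p$ is exactly what Theorem~\ref{Theorem:Representation[Bab]}(i) requires. Hence for any $f \in \Uab$ the quasi-norm equivalence gives
\[
\|f\|_{B^{\{\psi\}}_{p,\theta}} \ \ll \ \|f\|_{\Bab} \ \le \ 1, \quad \psi(k) \ := \ \alpha|k|_1 + \beta|k|_\infty.
\]
Then Lemma~\ref{Lemma[|f - R_Delta(f)|_q<]} applied to $\Delta=\Delta(\xi)$ bounds
$\|f - R_{\Delta(\xi)}(f)\|_q$ by $\|f\|_{B^{\{\psi\}}_{p,\theta}}$ times either $\sup_{k \notin \Delta(\xi)} 2^{-\eta(k)}$ when $(p,\theta,q)\in A$, or $\bigl(\sum_{k \notin \Delta(\xi)} 2^{-\tau\eta(k)}\bigr)^{1/\tau}$ when $(p,\theta,q)\in B$, where
\[
\eta(k) \ := \ \bigl(\alpha - (1/p-1/q)_+\bigr)|k|_1 + \beta |k|_\infty
\]
and $\tau>0$ is $\theta^*$, $q^*$, or $\theta'$ depending on the subcase.

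\textbf{The easy half: $(p,\theta,q)\in A$.} Here the definition $\Delta(\xi) = \{k : \eta(k) \le \xi\}$ was adopted verbatim. For $k \notin \Delta(\xi)$ we have $\eta(k) > \xi$, so $2^{-\eta(k)} < 2^{-\xi}$ termwise, and the sup is $\le 2^{-\xi}$. Combined with $\|f\|_{B^{\{\psi\}}_{p,\theta}} \ll 1$ this delivers \eqref{Ineq[UpperBoundUab]}.

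\textbf{The main half: $(p,\theta,q)\in B$.} Consider first $\beta>0$, so $\Delta(\xi) = \{k : \tilde\eta(k)\le \xi\}$ with $\tilde\eta(k) = (\alpha - (1/p-1/q)_+ + \varepsilon/d)|k|_1 + (\beta-\varepsilon)|k|_\infty$. Since $|k|_\infty \ge |k|_1/d$, one has $\eta(k) - \tilde\eta(k) = \varepsilon(|k|_\infty - |k|_1/d) \ge 0$. I will estimate the tail sum by parametrizing $k$ through $v := |k|_\infty$ and $u := |k|_1$, splitting $\sum_{k:\tilde\eta(k)>\xi} 2^{-\tau\eta(k)}$ as $\sum_v 2^{-\tau\beta v} \sum_{u}^{\prime} (\#\text{of }k) \cdot 2^{-\tau(\alpha-(1/p-1/q)_+)u}$, where $\sum^{\prime}$ runs over $u \in [v,dv]$ with $\tilde\eta(u,v)>\xi$, that is, $u > u_0(v) := (\xi-(\beta-\varepsilon)v)/(\alpha-(1/p-1/q)_+ + \varepsilon/d)$. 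The inner geometric sum over $k$ with $|k|_\infty=v$ is controlled by the elementary identity $\sum_{|k|_\infty \le v} 2^{-c|k|_1} \le (1-2^{-c})^{-d}$, combined with a tail estimate $\sum_{u>u_0} 2^{-cu} \ll 2^{-cu_0}$. One then splits on whether $u_0(v)\le v$ (easy tail: the $v$-sum is geometric with ratio involving $(\alpha+\beta)/(\alpha+\beta-\varepsilon(d-1)/d) >1$) or $u_0(v)>v$; in the latter range, the crucial algebraic identity
\[
\frac{\alpha - (1/p-1/q)_+}{\alpha - (1/p-1/q)_+ + \varepsilon/d} + \frac{\varepsilon}{d(\alpha - (1/p-1/q)_+) + \varepsilon} \ = \ 1
\]
makes the exponent of the dominating $v$-term equal exactly $\tau\xi$. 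The case $\beta<0$ is handled by the symmetric definition of $\Delta(\xi)$ with $-\varepsilon$ and $+\varepsilon$ swapped, and by an identical computation. Raising to the power $1/\tau$ yields the bound $\ll 2^{-\xi}$.

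\textbf{Main obstacle.} The entire difficulty is in case $B$: a naive choice $\Delta(\xi) = \{\eta(k)\le\xi\}$ makes the tail sum diverge (or at best pick up unacceptable factors), which is precisely why $\Delta(\xi)$ is inflated by $\varepsilon/d$ and $\varepsilon$ in the two coordinates. The exact ratio $\varepsilon/d$ to $\varepsilon$ is chosen to match the geometric inequality $|k|_\infty \ge |k|_1/d$, so that the gain in $|k|_1$ exactly compensates the loss in $|k|_\infty$ in the dominating boundary of the summation region.
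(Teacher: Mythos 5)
Your proposal is correct and follows the same skeleton as the paper: Theorem~\ref{Theorem:Representation[Bab]}(i) gives $\|f\|_{\Bpsi}\ll\|f\|_{\Bab}$ with $\psi(k)=\alpha|k|_1+\beta|k|_\infty$, Lemma~\ref{Lemma[|f - R_Delta(f)|_q<]} reduces everything to a sup or a $\tau$-sum of $2^{-\eta(k)}$ over $\ZZdp\setminus\Delta(\xi)$, case $A$ is immediate, and the whole content is the tail sum in case $B$. Where you differ is in how that tail sum is estimated: the paper compares it with the integral $\int_{W(\xi)}2^{-(\alpha{\bf 1},x)-\beta M(x)}\,dx$, slices by the level sets $\xi+s-1\le\alpha({\bf 1},x)+\beta M(x)<\xi+s$, and shows each slice sits inside a translate of a fixed bounded polyhedron of volume $\asymp s^d$, whence $\Sigma(\xi)\ll 2^{-\xi}\sum_s 2^{-s}s^d$; you instead sum directly over the lattice, organized by $v=|k|_\infty$ and $u=|k|_1$, and locate the dominating corner $k\approx \xi(dA+\beta)^{-1}{\bf 1}$ (with $A=\alpha-(1/p-1/q)_+$) where $\eta$ and $\tilde\eta$ coincide — your algebraic identity is exactly the statement of this tangency along the diagonal, and it is the same geometric fact the paper's polyhedron argument encodes. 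Both routes work and neither is more general. The one place where your sketch needs care is the lattice-point count near that corner: if you bound the number of $k$ with $|k|_1=u$, $|k|_\infty=v$ crudely by $\#\{k:|k|_1=u\}\ll u^{d-1}$, the regime $u_0(v)>v$ produces an extra factor $\asymp\xi^{d-1}$ multiplying $2^{-\tau\xi}$, which is fatal since the exponent there equals $\tau\xi$ exactly at the corner; you must instead use that for $|k|_\infty=v$ and $|k|_1=dv-j$ the count is $\ll(j+1)^{d-1}$ (write $k=v{\bf 1}-j'$), so the polynomial factor is $(dv-u_0(v))^{d-1}$, which is then absorbed by the geometric decay in $v$ away from the corner. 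This is precisely the role played by the paper's bound $|V(\xi,s)|\ll s^d$ with $V'(s)$ independent of $\xi$; once you make that count explicit, your argument closes.
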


\begin{proof}
If $(p,\theta,q) \in A$, by Lemma \ref{Lemma[|f - R_Delta(f)|_q<]}, we have
\begin{equation} \nonumber 
\sup_{f \in \Uab} \ \|f - R_{\Delta(\xi)}(f)\|_q 
\ \ll \ 
\sup_{k \in \ZZdp \setminus \Delta(\xi)} 2^{- (\alpha - (1/p - 1/q)_+)|k|_1 + \beta |k|_\infty )}
\ \ll \
2^{-\xi}.
\end{equation}

We next consider the case $(p,\theta,q) \in B$. In this case, by Lemma \ref{Lemma[|f - R_Delta(f)|_q<]}, we have
\begin{equation} \nonumber
\sup_{f \in \Uab} \ \|f - R_{\Delta(\xi)}\|_q^\tau 
\ \ll \ 
\sum_{k \in \ZZdp \setminus \Delta(\xi)} 2^{- \tau(\alpha - (1/p - 1/q)_+)|k|_1 + \beta |k|_\infty )}
\end{equation}
for $\tau = \theta^*, q^*, \theta'$. For simplicity we prove the case $(p,\theta,q) \in B$ for $\tau = 1$ and
$p \ge q$, the general case can be proven similarly. In this particular case, we get 
\begin{equation} \label{Ineq[UpperBoundUab(2)]}
\sup_{f \in \Uab} \ \|f - R_{\Delta(\xi)}\|_q 
\ \ll \ 
\sum_{k \in \ZZdp \setminus \Delta(\xi)} 2^{- \alpha |k|_1 - \beta |k|_\infty}
\ =: \Sigma(\xi).
\end{equation}

We first assume that  $\beta > 0$. It is easy to verify that for every $\xi > 0$,
\begin{equation} \label{asymp[Sigma(1)]}
\Sigma(\xi)
\ \asymp \
\int_{W(\xi)} 2^{-(\alpha {\bf 1},x) - \beta M(x)} dx,
\end{equation}
where $M(x):= \max_{j \in [d]} x_j$ for $x \in \RRd$, and 
\[
W(\xi):= \{x \in \RRdp: (\alpha + \varepsilon/d) ({\bf 1},x) + (\beta - \varepsilon) M(x) > \xi\}.
\]
Putting
\[
V(\xi,s):= \{x \in W(\xi): \xi + s - 1 \le \alpha  ({\bf 1},x) + \beta M(x) < \xi + s \}, \ s \in \NN,
\]
from \eqref{asymp[Sigma(1)]} we have
\begin{equation} \label{asymp[Sigma(2)]}
\Sigma(\xi)
\ \asymp \
2^{-\xi} \sum_{s=1}^\infty 2^{-s}|V(\xi, s)|.
\end{equation}

Let us estimate $|V(\xi,s)|$. Put $V^*(\xi,s):= V(\xi, s) - x^*$, where $x^*:= (\nu d)^{-1}\xi {\bf 1}$ and 
$\nu:= \alpha + \beta/d$. For every 
$y = x - x^* \in V^*(\xi,s)$, from the equation $({\bf 1},x^*) = \xi/\nu$ and the inequality 
$\alpha({\bf 1},x) + \beta M(x) < \xi + s $ we get
\begin{equation} \label{def[V'(1)]}
\alpha({\bf 1},y) + \beta M(y) \ < \ s.
\end{equation}
On the other hand, for every $x \in V(\xi,s)$, from  the inequality 
$\alpha({\bf 1},x) + \beta M(x) < \xi + s$ and 
$(\alpha + \varepsilon/d)({\bf 1},x) + (\beta - \varepsilon) M(x) > \xi$ we get
$ M(x) - ({\bf 1},x)/d < \varepsilon^{-1} s$. This inequality together with the inequality 
$\alpha({\bf 1},x) + \beta M(x) \ge   s - 1$ gives 
$({\bf 1},x) \ge \xi/\nu  + ((1 - \varepsilon^{-1}\beta)s + 1)/\nu$ for every $x \in V(\xi,s)$.
Hence, for every $y = x - x^* \in V^*(\xi,s)$,
\begin{equation} \label{def[V'(2)]}
({\bf 1},y) \ \ge \ ((1 - \varepsilon^{-1}\beta)s + 1)/\nu.
\end{equation}
This means that $V^*(\xi,s) \subset V'(s)$ for every $\xi > 0$, where $V'(s) \subset \RRd$ is the set of 
all $y \in \RRd$ given by the conditions \eqref{def[V'(1)]} and \eqref{def[V'(2)]}. Since $V'(s)$ is a bounded polyhedron and consequently, 
\begin{equation} \nonumber
|V(\xi,s)| \ = \ |V^*(\xi,s)| \ \le \ |V'(s)| \ \asymp \ s^d,
\end{equation} 
combining \eqref{Ineq[UpperBoundUab(2)]} and \eqref{asymp[Sigma(2)]}, we obtain
\begin{equation} \nonumber
\sup_{f \in \Uab} \ \|f - R_{\Delta(\xi)}\|_q 
\ \ll \ 
2^{-\xi} \sum_{s=1}^\infty 2^{-s} s^d
\ \asymp  \ 
2^{-\xi}.
\end{equation}

If $\beta < 0$, similarly to \eqref{Ineq[UpperBoundUab(2)]} and \eqref{asymp[Sigma(1)]}, we have
for every $\xi > 0$,
\begin{equation} \nonumber
\Sigma(\xi)
\ \asymp \
\int_{W'(\xi)} 2^{-(\alpha {\bf 1},x) - \beta M(x)} dx,
\end{equation} 
where
\[
W'(\xi):= \{x \in \RRdp: (\alpha - \varepsilon) ({\bf 1},x) + (\beta + \varepsilon) M(x) > \xi\}.
\]

From the last relation, similarly to the proof for the case $\beta > 0$,
we prove \eqref{Ineq[UpperBoundUab]} for the case $\beta < 0$.
\end{proof}

We construct special sets $\Delta'(\xi)$ parametrized by $\xi > 0$, for the recovery of functions $f \in \Ua$ by $R_{\Delta'(\xi)}(f)$. Let $0 < p,q, \theta \le \infty$ and $a \in \RRdp$ be given. In what follows, we assume the following restriction on the smoothness $a$ of $\Ba$:
\begin{equation} \label{condition[a]}
1/p < a_1 < a_2 \le ... \le a_d < r. 
\end{equation} 
We fix a number $\varepsilon$ so that
\begin{equation*} 
0 < \varepsilon < a_2  - a_1,
\end{equation*} 
 and define the set $\Delta'(\xi)$ for $\xi > 0$, by

 \begin{equation*} 
\Delta'(\xi)
:= \
\begin{cases}
\{k \in \ZZdp: (a,k) - (1/p - 1/q)_+ |k|_1 \le \xi\}, \ & (p,\theta,q) \in A, \\
\{k \in \ZZdp: (a(\varepsilon),k) - (1/p - 1/q)_+ |k|_1 \le \xi\}, \ & (p,\theta,q) \in B,
\end{cases}
\end{equation*}
 where $a(\varepsilon) = (a_1, a_2 - \varepsilon,...,a_d - \varepsilon)$.

\begin{theorem} \nonumber
Let $\ 0 < p, \theta, q \le \infty$ and $a \in \RRdp$ satisfying the condition \eqref{condition[a]} and
\begin{equation} \nonumber
1/p < a_1 < a_d < r. 
\end{equation}
Then we have the following upper bound
\begin{equation}  \label{Ineq[UpperBoundUa]}
\sup_{f \in \Ua} \ \|f - R_{\Delta'(\xi)}(f)\|_q 
\ \ll \ 
2^{-\xi}.
\end{equation}
\end{theorem}

\begin{proof}
Let us first consider the case $(p,\theta,q) \in A$. In this case, by Lemma \ref{Lemma[|f - R_Delta(f)|_q<]}, we have
\begin{equation} \nonumber 
\sup_{f \in \Ua} \ \|f - R_{\Delta'(\xi)}(f)\|_q 
\ \ll \ 
\sup_{k \in \ZZdp \setminus \Delta'(\xi)} 2^{- ((a,k) - (1/p - 1/q)_+|k|_1)}
\ \ll \
2^{-\xi}.
\end{equation}

We next treat the case $(p,\theta,q) \in B$. In this case, by Lemma \ref{Lemma[|f - R_Delta(f)|_q<]}, we have
\begin{equation} \nonumber
\sup_{f \in \Ua} \ \|f - R_{\Delta'(\xi)}(f)\|_q^\tau 
\ \ll \ 
\sum_{k \in \ZZdp \setminus \Delta'(\xi)} 2^{- \tau((a,k) - (1/p - 1/q)_+|k|_1)},
\end{equation}
for $\tau = \theta^*, q^*, \theta'$. For simplicity we prove the case $(p,\theta,q) \in B$ for $\tau = 1$ and
$(1/p - 1/q)_+) = 0$, the general case can be proven similarly. In this particular case, we get 
\begin{equation} \label{Ineq[UpperBoundUa(2)]}
\sup_{f \in \Ua} \ \|f - R_{\Delta'(\xi)}(f)\|_q 
\ \ll \ 
\sum_{k \in \ZZdp \setminus \Delta'(\xi)} 2^{- (a,k)} =: \Sigma(\xi).
\end{equation}
It is easy to verify that for every $\xi > 0$,
\begin{equation} \label{asymp[Sigma(1a)]}
\Sigma(\xi)
\ \asymp \
\int_{W(\xi)} 2^{-(a,x)} dx,
\end{equation}
where
\begin{equation} \nonumber
W(\xi):= \{x \in \RRdp: (a',x) > \xi\}.
\end{equation}
We put 
\begin{equation} \nonumber
V(\xi,s):= \{x \in W(\xi): \xi + s - 1 \le (a,x) < \xi + s \}, \ s \in \NN.
\end{equation}
then from \eqref{asymp[Sigma(1a)]} we have
\begin{equation} \label{asymp[Sigma(2a)]}
\Sigma(\xi)
\ \asymp \
2^{-\xi} \sum_{s=1}^\infty 2^{-s}|V(\xi,s)|.
\end{equation}

Let us estimate $|V(\xi,s)|$. Put $V^*(\xi,s):= V(\xi, s) - x^*$, where $x^*:= (a_1)^{-1}\xi e^1$. For every 
$y = x - x^* \in V^*(\xi,s)$, from the equation $(a,x^*) = \xi$ and the inequality 
$(a,x) < \xi + s $ we get $(a,y) < s$ and therefore,
\begin{equation} \label{def[V'(1a)]}
y_j \ < \ s/a_j, \ j \in [d].
\end{equation}
On the other hand, for every $x \in V(\xi,s)$, from  the inequality 
$(a,x) < \xi + s$ and $(a,x) - \varepsilon({\bf 1}',x) = (a',x) > \xi$ we get
$ ({\bf 1}',x) < \varepsilon^{-1} s$, where ${\bf 1}' := (0,1,1,...,1) \in \RRd$. This inequality together with the inequality $a_1x_1 + a_d({\bf 1}',x) \ge (a,x) \ge \xi + s - 1$ gives 
$x_1 \ge \xi/a_1  + ((1 - \varepsilon^{-1}a_d)s + 1)/a_1$ for every $x \in V(\xi,s)$.
Hence, for every $y = x - x^* \in V^*(\xi,s)$,
\begin{equation} \label{def[V'(2a)]}
y_1 \ \ge \ ((1 - \varepsilon^{-1}a_d)s + 1)/a_1, \ \ y_j \ \ge \ 0, \ j =2,...,d.
\end{equation}
This means that $V^*(\xi,s) \subset V'(s)$ for every $\xi > 0$, where $V'(s) \subset \RRd$ is the box of 
all $y \in \RRd$ given by the conditions \eqref{def[V'(1a)]} and \eqref{def[V'(2a)]}. Since 
\begin{equation} \nonumber
|V(\xi,s)| \ = \ |V^*(\xi,s)| \ \le \ |V'(s)| \ \asymp \ s^d,
\end{equation}
by \eqref{Ineq[UpperBoundUa(2)]} and \eqref{asymp[Sigma(2a)]}, we obtain
\begin{equation} \nonumber
\sup_{f \in \Ua} \ \|f - R_{\Delta(\xi)}\|_q 
\ \ll \ 
2^{-\xi} \sum_{s=1}^\infty 2^{-s} s^d
\ \asymp  \ 
2^{-\xi}.
\end{equation}
\end{proof}

\medskip
\noindent
{\bf Remark} \ The grids $G(\Delta(\xi))$ and $G(\Delta'(\xi))$ defined for 
$(p,\theta,q) \in A$ or $(p,\theta,q) \in B$ with $\beta >0$, were employed in \cite{Di90, Di91, Di92} for sampling recovery of periodic functions from an intersection of spaces of different mixed smoothness.
Independently of our paper, similar upper bounds of the error in $W^\beta_2(\TTd)$-norm of linear sampling algorithms by interpolation for functions from periodic space $W^{\alpha {\bf 1}}_2(\TTd)$ have recently obtained in \cite{GH14}.

\section{Sparsity and optimality} \label{Optimality}

Denote by $\lfloor x \rfloor$ the integer part of $x \in \RR_+$.
\begin{lemma} \label{Lemma[SamplingNumber]}
Let $\ 0 < p, \theta, q \le \infty$ and $\alpha \in \RR_+$, $\beta \in \RR$, $\beta \not=0$, such that
\begin{equation} \nonumber
1/p < \min (\alpha,\alpha + \beta) \le  \max(\alpha,\alpha + \beta) < r. 
\end{equation}
Then we have 
\begin{equation}  \label{asymp[Sampling Number]}
|G(\Delta(\xi))|
\ \asymp \ 
\sum_{k \in \Delta(\xi)} 2^{|k|_1}
\ \asymp \ 
2^{\xi/ \nu},
\end{equation}
where 
\begin{equation} \label{def[nu]}
\nu 
:= \
\begin{cases}
\alpha  + \beta/d - (1/p - 1/q)_+, \ &  \beta > 0, \\
\alpha  + \beta - (1/p - 1/q)_+, \ &  \beta < 0.
\end{cases}
\end{equation}
\end{lemma}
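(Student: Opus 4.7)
The proof follows the geometric template of the proof of Theorem \ref{Theorem[UpperBoundUab]}. First, from Lemma \ref{Lemma:R_m=L_n} we have $|G(\Delta(\xi))|=\sum_{k\in\Delta(\xi)}\prod_{j=1}^d(2^{k_j}+1)$, and since $2^{|k|_1}\le\prod_j(2^{k_j}+1)\le 2^d\cdot 2^{|k|_1}$ it suffices to show $\sum_{k\in\Delta(\xi)}2^{|k|_1}\asymp 2^{\xi/\nu}$. In all three cases $\Delta(\xi)$ has the form $\{k\in\ZZdp:\ c_1|k|_1+c_2|k|_\infty\le\xi\}$ for explicit $c_1,c_2$; a direct check using $\varepsilon<|\beta|$ shows $c_1>0$, $\mathrm{sgn}(c_2)=\mathrm{sgn}(\beta)$, and $c_1+c_2/d=\nu$ if $\beta>0$ while $c_1+c_2=\nu$ if $\beta<0$. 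Together with $|k|_1/d\le|k|_\infty\le|k|_1$, these identities immediately give the uniform bound $|k|_1\le\xi/\nu$ on $\Delta(\xi)$, attained along the diagonal when $\beta>0$ and along the axis $e^1$ when $\beta<0$.

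For the lower bound I would take the single test point $k^*:=\lfloor\xi/(d\nu)\rfloor{\bf 1}$ when $\beta>0$ and $k^*:=\lfloor\xi/\nu\rfloor e^1$ when $\beta<0$. Substitution into $c_1|k^*|_1+c_2|k^*|_\infty$ gives $d\nu\lfloor\xi/(d\nu)\rfloor\le\xi$ or $\nu\lfloor\xi/\nu\rfloor\le\xi$ respectively, so $k^*\in\Delta(\xi)$, and since $|k^*|_1\ge\xi/\nu-d$ in both cases, $\sum_{k\in\Delta(\xi)}2^{|k|_1}\ge 2^{|k^*|_1}\gg 2^{\xi/\nu}$. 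For the upper bound I would pass to the continuous analogue $\sum_{k\in\Delta(\xi)}2^{|k|_1}\asymp\int_{D(\xi)}2^{|x|_1}\,dx$, where $D(\xi):=\{x\in\RRdp:\ c_1|x|_1+c_2|x|_\infty\le\xi\}$, and translate by the extremal point $x^*:=(\xi/(d\nu)){\bf 1}$ respectively $(\xi/\nu)e^1$. Using $(c_1+c_2/d)\xi/\nu=\xi$ respectively $(c_1+c_2)\xi/\nu=\xi$, the defining inequality becomes $c_1({\bf 1},y)+c_2 M(y)\le 0$ on $y=x-x^*$ (with $M(y)$ the analogous $\max$ functional to that of the proof of Theorem \ref{Theorem[UpperBoundUab]}), yielding $\int_{D(\xi)}2^{|x|_1}\,dx=2^{\xi/\nu}\int_{D(\xi)-x^*}2^{({\bf 1},y)}\,dy$. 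Decomposing $D(\xi)-x^*$ into unit slabs indexed by $s=0,1,\ldots$ with $-s\le({\bf 1},y)<-s+1$, the slab geometry is expected to give volume $O(s^{d-1})$ uniformly in $\xi$, so the translated integral is bounded by $\sum_{s\ge 0}s^{d-1}2^{-s}=O(1)$.

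The main obstacle is this uniform-in-$\xi$ volume bound $O(s^{d-1})$ on the slabs. The essential point is that the boundary hyperplane $c_1|x|_1+c_2|x|_\infty=\xi$ touches the cone $\RRdp$ along a one-dimensional extremal face (the diagonal when $\beta>0$, the $e^1$-axis when $\beta<0$), so moving $s$ units away from $x^*$ in the $|x|_1$ direction forces the transverse coordinates of $x-x^*$ to stay in a box of side $\asymp s$; after the sum-constraint is imposed on the slice, the $(d-1)$-dimensional volume comes out $\asymp s^{d-1}$. This is essentially the mirror image of the polyhedral computation already performed for the sets $V'(s)$ in the proof of Theorem \ref{Theorem[UpperBoundUab]}, with the inequalities reversed.
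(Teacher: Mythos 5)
Your proposal is correct and follows essentially the same route as the paper: reduce $|G(\Delta(\xi))|$ to $\sum_{k\in\Delta(\xi)}2^{|k|_1}$, pass to the continuous integral, translate by the extremal point ($\frac{\xi}{d\nu}{\bf 1}$ for $\beta>0$, $\frac{\xi}{\nu}e^1$ for $\beta<0$), bound the unit slabs at depth $s$ by the volume $O(s^{d-1})$ of a fixed bounded polyhedron independent of $\xi$, and obtain the lower bound from the single lattice point $k^*$. Your unified treatment of the three cases via the coefficients $(c_1,c_2)$ with $c_1+c_2/d=\nu$ (resp. $c_1+c_2=\nu$) is only an organizational streamlining of the paper's case-by-case argument (which handles $(p,\theta,q)\in A$ by setting $\varepsilon=0$).
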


\begin{proof}
The first asymptotic equivalence in \eqref{asymp[Sampling Number]} follows from the definitions. Let us prove the second one. For simplicity we prove it for the case where $p \ge q$, the general case can be proven similarly.

Let us first consider the case $(p,\theta,q) \in B, \beta > 0$. 
It is easy to verify that for every $\xi > 0$,
\begin{equation} \label{asymp[SamplingNumber(1)]}
\sum_{k \in \Delta(\xi)} 2^{|k|_1}
\ \asymp \
\int_{W(\xi)} 2^{({\bf 1},x)} dx,
\end{equation}
where
\begin{equation} \nonumber
W(\xi):= \{x \in \RRdp: (\alpha + \varepsilon/d) ({\bf 1},x) + (\beta - \varepsilon) M(x) \le \xi\}
\end{equation}
and  $M(x):= \max_{j \in [d]} x_j$ for $x \in \RRd$. 
We put 
\begin{equation} \nonumber
V(\xi,s):= \{x \in W(\xi): \xi/\nu + s - 1 \le ({\bf 1},x) < \xi/\nu + s \}, \ s \in \ZZ_+.
\end{equation}
From the inequalities $\beta > \varepsilon$ and  $M(x) - ({\bf 1},x)/d \ge 0, \ x \in \RRdp,$ one can verify that 
for every $x \in W(\xi)$, $({\bf 1},x) \le \xi/\nu$. Hence,  we have
\begin{equation} \label{asymp[SamplingNumber(2)]}
\int_{W(\xi)} 2^{({\bf 1},x)} dx
\ \ll \
2^{\xi/\nu} \sum_{s=0}^{\lceil \xi/\nu \rceil} 2^{-s}|V(\xi, s)|.
\end{equation}

Let us estimate $|V(\xi,s)|$. Put $V^*(\xi,s):= V(\xi, s) - x^*$, where $x^*:= (\nu d)^{-1}\xi {\bf 1}$. From the equation $({\bf 1},x^*) = \xi/\nu$, we get for every $y = x - x^* \in V^*(\xi,s)$, 
\begin{equation} \label{def[SamplingNumberV'(1)]}
s - 1 \ \le \ ({\bf 1},y) \ < \ s.
\end{equation}
and
\begin{equation} \label{def[SamplingNumberV'(2)]}
(\alpha + \varepsilon/d) ({\bf 1},y) + (\beta - \varepsilon) M(y) \ \le \ 0.
\end{equation}
This means that $V^*(\xi,s) \subset V'(s)$ for every $\xi > 0$, where $V'(s) \subset \RRd$ is the set of 
all $y \in \RRd$ given by the conditions \eqref{def[SamplingNumberV'(1)]} and \eqref{def[SamplingNumberV'(2)]}. Notice that $V'(s)$ is a bounded polyhedron and $|V'(s)| \ \asymp \ s^{d-1}$. Hence, by the inequality  
\begin{equation} \nonumber
|V(\xi,s)| \ = \ |V^*(\xi,s)| \ \le \ |V'(s)|,
\end{equation}
 \eqref{asymp[SamplingNumber(1)]} and \eqref{asymp[SamplingNumber(2)]}, we prove the upper bound in 
\eqref{asymp[Sampling Number]}: 
\begin{equation} \nonumber
\sum_{k \in \Delta(\xi)} 2^{|k|_1}
\ \ll \
2^{\xi/\nu} \sum_{s=0}^\infty 2^{-s}s^{d-1}
\ \asymp \
2^{\xi/\nu}. 
\end{equation}
 To prove the lower bound for this case, we take $k^* := \lfloor \xi/d\nu \rfloor {\bf 1} \in \ZZdp$. It is easy to check $k^* \in \Delta(\xi)$ and consequently,
\begin{equation} \nonumber
\sum_{k \in \Delta(\xi)} 2^{|k|_1}
\ \ge \
 2^{|k^*|_1}
 \ \gg \
2^{\xi/\nu}. 
\end{equation}

The case $(p,\theta,q) \in B, \beta < 0$ can be proven similarly with a slight modification. 
To prove the case $(p,\theta,q) \in A$ it is enough to  put $\varepsilon = 0$ in the proof of the case 
$(p,\theta,q) \in B$. 
\end{proof}

\begin{lemma} \label{Lemma[SamplingNumberUa]}
Let $\ 0 < p, \theta, q \le \infty$ and $a \in \RRdp$ satisfying the condition \eqref{condition[a]} and
\begin{equation} \nonumber
1/p < a_1 < a_d < r. 
\end{equation}
Then we have 
\begin{equation}  \label{asymp[Sampling NumberUa]}
|G(\Delta'(\xi))|
\ \asymp \ 
\sum_{k \in \Delta'(\xi)} 2^{|k|_1}
\ \asymp \ 
2^{\xi/(a_1 - (1/p - 1/q)_+)}.
\end{equation}
\end{lemma}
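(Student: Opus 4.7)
The plan is to mirror the structure of the proof of Lemma \ref{Lemma[SamplingNumber]}, with the role played there by the max functional $|x|_\infty$ now taken by the single coordinate $x_1$, since the hypothesis $a_1 < a_2 \le \cdots \le a_d$ singles out $x_1$ as the coordinate carrying the smallest weight. The first asymptotic equivalence $|G(\Delta'(\xi))| \asymp \sum_{k \in \Delta'(\xi)} 2^{|k|_1}$ is immediate from the definition of the grid $G$, so the substance lies in the second equivalence.

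For the upper bound I will set $b := (1/p - 1/q)_+$ and rewrite $\Delta'(\xi) = \{k \in \ZZdp : \sum_{j=1}^d \alpha_j k_j \le \xi\}$ with $\alpha_1 := a_1 - b$ and, for $j \ge 2$, $\alpha_j := a_j - b$ in case $A$ or $\alpha_j := a_j - \varepsilon - b$ in case $B$. The hypothesis $a_1 > 1/p$ guarantees $\alpha_1 > 0$, while $a_1 < a_2$ together with $0 < \varepsilon < a_2 - a_1$ gives $\alpha_1 < \alpha_j$ for every $j \ge 2$. Replacing the sum by the integral $\int_{W(\xi)} 2^{({\bf 1},x)}\,dx$ over $W(\xi) := \{x \in \RRdp : \sum_j \alpha_j x_j \le \xi\}$ and integrating first in $x_1$ produces a factor of order $2^{(\xi - \sum_{j \ge 2}\alpha_j x_j)/\alpha_1}$; the residual integral in $x_2, \ldots, x_d$ then has integrand $2^{\sum_{j \ge 2}(1 - \alpha_j/\alpha_1)x_j}$ with strictly negative exponents in each variable, so it converges to an absolute constant. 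This yields $\sum_{k \in \Delta'(\xi)} 2^{|k|_1} \ll 2^{\xi/\alpha_1} = 2^{\xi/(a_1 - b)}$.

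For the lower bound I will take the concentrated index $k^* := \lfloor \xi/\alpha_1 \rfloor e^1$, verify that $k^* \in \Delta'(\xi)$ (indeed $\sum_j \alpha_j k^*_j = \alpha_1 \lfloor \xi/\alpha_1 \rfloor \le \xi$), and conclude $\sum_{k \in \Delta'(\xi)} 2^{|k|_1} \ge 2^{|k^*|_1} \gg 2^{\xi/\alpha_1}$. The argument is on the whole a routine simplex-volume computation with no serious obstacle. The only delicate point is the choice of $\varepsilon$ in case $B$: it must be strictly positive (to make the recovery estimate in Theorem \ref{Theorem[UpperBoundUa]} summable) yet strictly less than $a_2 - a_1$ so as to preserve the inequality $\alpha_1 < \alpha_j$ that drives convergence of the residual geometric series above. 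This is exactly the mechanism by which the asymptotic depends only on $a_1 - (1/p - 1/q)_+$ and not on the other $a_j$, mirroring the way the proof of Lemma \ref{Lemma[SamplingNumber]} extracts the exponent $1/\nu$ from the dominant direction.
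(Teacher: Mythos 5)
Your proof is correct and follows essentially the same route as the paper: discretize the sum into an integral of $2^{({\bf 1},x)}$ over the simplex $\{x\in\RRdp:\sum_j\alpha_j x_j\le\xi\}$, and exhibit the corner point $k^*=\lfloor\xi/\alpha_1\rfloor e^1$ for the lower bound. The only difference is that you evaluate the integral by integrating out $x_1$ first and observing that the residual exponents $1-\alpha_j/\alpha_1$ are strictly negative (which is where $\varepsilon<a_2-a_1$ enters), whereas the paper slices $W(\xi)$ by level sets of $({\bf 1},x)$ and bounds each slice's volume by $s^{d-1}$ before summing $\sum_s 2^{-s}s^{d-1}$ --- the same computation in different clothing, with the minor bonus that you treat the case $p<q$ directly instead of restricting to $p\ge q$ ``for simplicity.''
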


\begin{proof}
The first asymptotic equivalence in  \eqref{asymp[Sampling NumberUa]} follows from the definitions. Let us prove the second one. For simplicity we prove it for the case where $p \ge q$, the general case can be proven similarly.

Let us first consider the case $(p,\theta,q) \in B$. 
It is easy to verify that for every $\xi > 0$,
\begin{equation} \label{asymp[SamplingNumber(1)Ua]}
\sum_{k \in \Delta'(\xi)} 2^{|k|_1}
\ \asymp \
\int_{W(\xi)} 2^{({\bf 1},x)} dx,
\end{equation}
where
\begin{equation} \nonumber
W(\xi):= \{x \in \RRdp:(a',x)  \le \xi\}.
\end{equation}
We put 
\begin{equation} \nonumber
V(\xi,s):= \{x \in W(\xi): \xi/a_1+ s - 1 \le ({\bf 1},x) < \xi/a_1 + s \}, \ s \in \ZZ_+.
\end{equation}
One can verify that 
for every $x \in W(\xi)$, $({\bf 1},x) \le \xi/a_1$. Hence,  we have
\begin{equation} \label{asymp[SamplingNumber(2)Ua]}
\int_{W(\xi)} 2^{({\bf 1},x)} dx
\ \ll \
2^{\xi/a_1} \sum_{s=0}^{\lceil \xi/a_1 \rceil} 2^{-s}|V(\xi, s)|.
\end{equation}

Let us estimate $|V(\xi,s)|$. Put $V^*(\xi,s):= V(\xi, s) - x^*$, where $x^*:= (a_1)^{-1}\xi e^1$. From the equation $({\bf 1},x^*) = \xi/a_1$, we get for every $y = x - x^* \in V^*(\xi,s)$, 
\begin{equation} \label{def[SamplingNumberV'(1)Ua]}
s - 1 \ \le \ ({\bf 1},y) \ < \ s.
\end{equation}
and
\begin{equation} \label{def[SamplingNumberV'(2)Ua]}
(a',y) \ \le \ 0.
\end{equation}
This means that $V^*(\xi,s) \subset V'(s)$ for every $\xi > 0$, where $V'(s) \subset \RRd$ is the set of 
all $y \in \RRd$ given by the conditions \eqref{def[SamplingNumberV'(1)Ua]} and \eqref{def[SamplingNumberV'(2)Ua]}. Notice that $V'(s)$ is a bounded polyhedron and $|V'(s)| \ \asymp \ s^{d-1}$. Hence, by the inequality  
\begin{equation} \nonumber
|V(\xi,s)| \ = \ |V^*(\xi,s)| \ \le \ |V'(s)|,
\end{equation}
 \eqref{asymp[SamplingNumber(1)Ua]} and \eqref{asymp[SamplingNumber(2)Ua]}, we obtain the upper bound in  
\eqref{asymp[Sampling NumberUa]}:
\begin{equation} \nonumber
\sum_{k \in \Delta'(\xi)} 2^{|k|_1}
\ \ll \
2^{\xi/a_1} \sum_{s=0}^\infty 2^{-s}s^{d-1}
\ \asymp \
2^{\xi/a_1}. 
\end{equation}
 To prove the lower bound, we take $k^* := \lfloor \xi/a_1 \rfloor e^1 \in \ZZdp$. It is easy to check 
$k^* \in \Delta'(\xi)$ and consequently,
\begin{equation} \nonumber
\sum_{k \in \Delta'(\xi)} 2^{|k|_1}
\ \ge \
 2^{|k^*|_1}
 \ \gg \
2^{\xi/a_1}. 
\end{equation}
\end{proof}

\medskip
\noindent
{\bf Remark} \ The grids of sample points $G(\Delta(\xi))$ and $G(\Delta'(\xi))$ are sparse and have much less elements than the standard dyadic full grids $G(\Delta_1(\xi))$ and Smolyak grids $G(\Delta_2(\xi))$ which give the same recovery error, where
$\Delta_1(\xi):= \{k \in \ZZdp: \lambda|k|_\infty  \le \xi \}$ and 
$\Delta_2(\xi):= \{k \in \ZZdp: \lambda|k|_1  \le \xi \}$ 
and the number $\lambda:= \nu$ is as in \eqref{def[nu]} for $G(\Delta(\xi))$ and 
$\lambda:= a_1 - (1/p - 1/q)_+$ for $G(\Delta'(\xi))$. For instance, the linear sampling algorithms $R_{\Delta_i(\xi)}$, $i=1,2$, on the grids $G(\Delta_i(\xi))$ gives the worst case error
\begin{equation}  \nonumber
\sup_{f \in \Uab} \ \|f - R_{\Delta_i(\xi)}(f)\|_q 
\ \asymp \ 
2^{-\xi}.
\end{equation}
The number of sample points in $G(\Delta_1(\xi))$ is
$|G(\Delta_1(\xi))| \asymp 2^{d \xi/\nu}$ and in $G(\Delta_2(\xi))$ is
$|G(\Delta_2(\xi))| \asymp 2^{\xi/\nu} \xi^{d-1}$. Whereas, due to Theorem \ref{Theorem[UpperBoundUab]} and 
Lemma \ref{Lemma[SamplingNumber]} we can get the same error by the linear sampling algorithm $R_{\Delta(\xi)}$ on the grids $G(\Delta(\xi))$ with the number of sample points $|G(\Delta(\xi))| \asymp 2^{\xi/\nu}$.

\medskip

The following two theorems show that the linear sampling sampling algorithms $R_{\Delta(\xi)}$ on sparse grids 
$G(\Delta(\xi))$, and  $R_{\Delta'(\xi)}$ on sparse grids $G(\Delta'(\xi))$
are asymptotically optimal in the sense of the quantity $r_n$.

\begin{theorem} \label{Theorem[r_nUab]}
Let $\ 0 < p, \theta, q \le \infty$ and $\alpha \in \RR_+$, $\beta \in \RR$, $\beta \not = 0$, such that
\begin{equation} \nonumber
1/p < \min (\alpha,\alpha + \beta) \le  \max(\alpha,\alpha + \beta) < r. 
\end{equation}
Assume that for a given $n \in \ZZ_+$, $\xi_n$ is the largest nonnegative number such that
\begin{equation} \label{[|G(Delta(xi_n)|<]}
|G(\Delta(\xi_n))| \ \le \ n.
\end{equation}  
Then $R_{\Delta(\xi_n)}$ defines an asymptotically optimal linear sampling algorithm for
$r_n:= r_n(\Uab, L_q) $ by
\begin{equation} \label{[R_Delta(xi_n))]}
R_{\Delta(\xi_n)}(f) 
\ = \ 
L_n(X_n^*,\Phi_n^*,f)
\ = \ 
\sum_{(k,s) \in K(\Delta(\xi_n))}  f(2^{-k}s) \psi_{k,s}, 
\end{equation} 
where $X_n^* := G(\Delta(\xi_n))= \{ 2^{-k}s\}_{(k,s) \in K(\Delta(\xi_n))}$, 
$\Phi_n^*:= \{ \psi_{k,s}\}_{(k,s) \in K(\Delta(\xi_n))}$,
and we have the following asymptotic orders
\begin{equation} \label{asymp[r_nUab]}
\sup_{f \in \Uab}\ \|f - R_{\Delta(\xi_n)}(f) \|_q
\ \asymp \
r_n 
\ \asymp \
\begin{cases}
n^{- \alpha  - \beta/d + (1/p - 1/q)_+}, \ &  \beta > 0, \\
n^{- \alpha  - \beta + (1/p - 1/q)_+}, \ &  \beta < 0.
\end{cases}
\end{equation}
\end{theorem}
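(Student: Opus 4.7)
The proof splits naturally into an upper bound on $r_n$ and a matching lower bound on $\varrho_n$, and since the trivial inequality $\varrho_n \le r_n$ transfers upper bounds from $r_n$ to $\varrho_n$ and lower bounds in the opposite direction, this covers both quantities at once. For the upper bound, the representation \eqref{[R_Delta(xi_n))]} of $R_{\Delta(\xi_n)}$ as a linear sampling algorithm on $X_n^* = G(\Delta(\xi_n))$ is immediate from Lemma \ref{Lemma:R_m=L_n}, and by \eqref{[|G(Delta(xi_n)|<]} the grid has at most $n$ points. The maximality of $\xi_n$ combined with Lemma \ref{Lemma[SamplingNumber]} forces $n \asymp 2^{\xi_n/\nu}$, with $\nu$ as in \eqref{def[nu]}, hence $2^{-\xi_n} \asymp n^{-\nu}$. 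Plugging this into Theorem \ref{Theorem[UpperBoundUab]} yields $\sup_{f \in \Uab} \|f - R_{\Delta(\xi_n)}(f)\|_q \ll n^{-\nu}$, establishing the upper half of \eqref{asymp[r_nUab]} for both $r_n$ and $\varrho_n$.

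For the matching lower bound on $\varrho_n$ I would use the standard parity/fooling-function argument: if $g \in \Uab$ vanishes at all $n$ samples of any candidate algorithm then $\pm g$ produce identical data, so for any reconstruction map $P_n$ one of $\pm g$ incurs error at least $\|g\|_q$. It thus suffices, for an arbitrary $X_n \subset \IId$, to build $g \in \Uab$ with $g|_{X_n} = 0$ and $\|g\|_q$ of the claimed order. Such $g$'s are constructed from single-scale B-spline translates. In the case $\beta > 0$ I use the diagonal scale $k^* = m\, {\bf 1}$ with $2^{dm}$ of order $n$; among the $\asymp 2^{dm}$ translates $M^{(r)}_{k^*,s}$ one can extract a subfamily with pairwise essentially disjoint supports, and at most $n$ of its members can meet $X_n$, leaving a subset $S'$ on which I form the signed sum $g = \lambda \sum_{s \in S'} \varepsilon_s M^{(r)}_{k^*, s}$ (for $p \ge q$) or a single bump (for $p < q$). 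Normalizing via Corollary \ref{Corollary[B-inequality]} to enforce $\|g\|_\Bab \le 1$ and computing $\|g\|_q$ via \eqref{eq:StabIneq} yields $\|g\|_q \asymp n^{-\alpha - \beta/d + (1/p - 1/q)_+}$. In the case $\beta < 0$ I would either exploit the embedding $U^{a^j}_{p,\theta} \subset C\cdot \Uab$ with $a^j = \alpha\,{\bf 1} + \beta e^j$ furnished by \eqref{Bab=sum}, reducing to an anisotropic Besov lower bound whose smallest component is $\alpha + \beta$, or repeat the bump construction directly at the axis-concentrated scale $k^{**} = m\, e^j$ with $2^m \asymp n$, which delivers $\|g\|_q \asymp n^{-\alpha - \beta + (1/p - 1/q)_+}$.

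The principal technical difficulty lies in reconciling the B-spline arithmetic at a single scale with the correct exponent in each sign regime of $\beta$ and each of the three sub-cases $p \ge q$, $p < q < \infty$, $p < q = \infty$. The disjointness of the chosen B-spline supports is what allows the $L_q$-norm of a signed sum to split additively as $N^{1/q}\|\mathrm{bump}\|_q$; matching this with the normalization $\lambda \asymp 2^{|k|_1/p - \alpha|k|_1 - \beta|k|_\infty}$ reproduces the target exponent $\nu - (1/p - 1/q)_+$ only when $k$ is chosen as in Lemma \ref{Lemma[SamplingNumber]} — the diagonal scale $k^* = m\, {\bf 1}$ for $\beta > 0$ (where $|k|_\infty = |k|_1/d$) and an axis scale $k^{**} = m e^j$ for $\beta < 0$ (where $|k|_\infty = |k|_1$). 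These are precisely the vertices where the volume count in Lemma \ref{Lemma[SamplingNumber]} saturates and where the algorithm $R_{\Delta(\xi_n)}$ is tight, so the upper and lower bounds necessarily meet.
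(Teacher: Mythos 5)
Your proposal is correct and follows essentially the same route as the paper: the upper bound combines Lemma \ref{Lemma[SamplingNumber]}, Theorem \ref{Theorem[UpperBoundUab]} and Lemma \ref{Lemma:R_m=L_n} exactly as you describe, and the lower bound is obtained from a fooling function built out of disjointly supported B-spline translates at the diagonal scale $k^*=\eta{\bf 1}$ for $\beta>0$ and at an axis-concentrated scale $k^*=(\eta,m,\dots,m)$ for $\beta<0$, normalized via Corollary \ref{Corollary[B-inequality]} and \eqref{eq:StabIneq}. Your explicit dichotomy (single bump for $p<q$, full sum of $\asymp n$ bumps for $p\ge q$) is if anything a more carefully calibrated execution of the same construction than the paper's, whose displayed norm asymptotics for the sum over $Z^*(\eta)$ are those of a single translate.
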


\begin{proof} \\
{\em Upper bounds}. 
Due to Lemma \ref{Lemma[SamplingNumber]} we have 
\begin{equation}  \nonumber
n 
\ \asymp \
2^{\xi_n/\nu}
\ \asymp \ 
|G(\Delta(\xi_n))|
\ \le \ 
n,
\end{equation}
where $\nu$ is as in \eqref{def[nu]}. Hence, we find
\begin{equation} \label{asymp[2^xi]}
2^{-\xi_n}
\ \asymp \
\begin{cases}
n^{- \alpha  + \beta/d - (1/p - 1/q)_+}, \ &  \beta > 0, \\
n^{- \alpha  + \beta - (1/p - 1/q)_+}, \ &  \beta < 0.
\end{cases}
\end{equation}
By Lemma \ref{Lemma:R_n=L_n} and \eqref{[|G(Delta(xi_n)|<]}, $R_{\Delta(\xi_n)}$ is a linear sampling algorithm of the form \eqref{def[L_n]} as in \eqref{[R_Delta(xi_n))]} and consequently, from  Theorem \ref{Ineq[UpperBoundUab]} we get
\begin{equation} \nonumber
r_n 
\ \le \
\sup_{f \in \Uab} \ \|f - R_{\Delta(\xi_n)}(f)\|_q 
\ \ll \ 
2^{-\xi_n}.
\end{equation}
These relations together with \eqref{asymp[2^xi]} proves the upper bounds of \eqref{asymp[r_nUab]}.

\noindent
{\em Lower bounds}. We need the following auxiliary result. If $W \subset L_q$, then we have
\begin{equation} \label{[r_n(W, L_q)>]}
r_n(W, L_q(\IId)) 
\ \gg \
\inf_{X_n=\{x^j\}_{j=1}^m \subset \IId} \ \ \sup_{f \in W: \ f(x^j) = 0, \ j=1,...,n} \ \|f\|_q. 
\end{equation}
For the proof of this inequality see \cite[Proposition 19]{NoT}. Since $\|f\|_q \ge \|f\|_p$ for $p \ge q$, it is sufficient to prove the lower bound for the case $p \le q$.
Fix a number $r' = 2^m$ with integer $m$ so that $\max(\alpha,\alpha + \beta) <  \min(r', r' - 1 + 1/p)$. 

We first treat the case $\beta > 0$. Put $k^*= k^*(\eta) := \eta{\bf 1}$ for an integer $\eta > m$. Consider the boxes  $J(s) \subset \IId$
\begin{equation} \nonumber
J(s)
:= \
\{x \in \IId: 2^{- \eta + m} s_j \le x_j < 2^{- \eta + m} (s_j +1), \ j \in [d]\}, \ s \in Z(\eta), 
\end{equation}
where
\begin{equation} \nonumber
Z(\eta)
:= \
\{s \in \ZZdp: 0 \le  s_j \le  2^{\eta-m} - 1, \ j \in [d]\}. 
\end{equation}
For a given $n$, we find $\eta$ satisfying the relations 
\begin{equation}  \label{def[eta]}
n 
\ \asymp \
2^{|k^*|_1}
\ \asymp \
2^{d(\eta - m)}
\ = \ 
|Z(\eta)|
\ \ge \ 
2n.
\end{equation}
Let $X_n = \{x^j\}_{j=1}^n$ be an arbitrary subset of $n$ points in $\IId$. Since $J(s) \cap J(s') = \varnothing$ for 
$s \not= s'$, and $|Z(\eta)| \ge 2n$, there is $Z^*(\eta) \subset Z(\eta)$ such that $|Z^*(\eta)| \ge n$ and
\begin{equation} \label{cap[X_n]}
X_n \cap \{\cup_{s \in Z^*(\eta)} J(s)\} \ = \ \varnothing.
\end{equation}
Consider the function $g^* \in \Sigma_r^d(k^*)$ defined by
\begin{equation} \label{[g^*Uab(1)]}
g^*
:= \
\lambda 2^{- \alpha |k^*|_1 - \beta |k^*|_\infty + |k^*|_1/p} \sum_{s \in Z^*(\eta)} M_{k^*,s+r'/2},
\end{equation}
where $M_{k^*,s+r'/2}$ are B-splines of order $r'$.
Since $|Z^*(\eta)| \asymp 2^{|k^*|_1}$, by \eqref{eq:StabIneq} we have
\begin{equation} \label{asymp[|g^*|_q]}
\|g^*\|_q
\ \asymp \
\lambda 2^{- \alpha |k^*|_1 - \beta |k^*|_\infty + (1/p - 1/q)|k^*|_1},
\end{equation}
and
\begin{equation} \nonumber
\|g^*\|_p
\ \asymp \
\lambda 2^{- \alpha |k^*|_1 - \beta |k^*|_\infty}.
\end{equation}
Hence, by Corollary  \ref{Corollary[B-inequality]} there is $\lambda > 0$ independent of $\eta$ and $n$ such that
$g^* \in \Uab$. Notice that $M_{k^*,s+m-1}(x), \ x \not\in J(s)$, for every $s \in Z^*(\eta)$, and consequently, by \eqref{cap[X_n]} $g^*(x^j) = 0, \ j =1,...,n$. From the inequality \eqref{[r_n(W, L_q)>]} 
\eqref{asymp[|g^*|_q]} and \eqref{def[eta]} we obtain
\begin{equation} \nonumber
r_n 
\ \gg \
\|g^*\|_q 
\ \asymp \
n^{- \alpha  - \beta/d + 1/p - 1/q}.
\end{equation}
This proves the lower bound of \eqref{asymp[r_nUab]} for the case $\beta > 0$.

We now consider the case $\beta < 0$. We will use some notations which coincide with those in the proof of the case $\beta > 0$. Put $k^*= k^*(\eta) := (\eta,m,...,m)$ for integer $\eta > m$. Consider the boxes  $J(s) \subset \IId$
\begin{equation} \nonumber
J(s)
:= \
\{x \in \IId: 2^{- \eta + m} s_1 \le x_1 < 2^{- \eta + m} (s_1 +1)\}, \ s \in Z(\eta), 
\end{equation}
where
\begin{equation} \nonumber
Z(\eta)
:= \
\{s \in \ZZdp: 0 \le  s_1 \le  2^{\eta-m} - 1, \ s_j = 0, \ j = 2,...,d\}. 
\end{equation}
For a given $n$, we find $\eta$ satisfying the relations 
\begin{equation}  \label{def[eta(beta<0)]}
n 
\ \asymp \
2^{k^*_1}
\ \asymp \
2^{\eta - m}
\ = \ 
|Z(\eta)|
\ \ge \ 
2n.
\end{equation}
Let $X_n = \{x^j\}_{j=1}^n$ be an arbitrary subset of $n$ points in $\IId$. Since $J(s) \cap J(s') = \varnothing$ for 
$s \not= s'$, and $|Z(\eta)| \ge 2n$, there is $Z^*(\eta) \subset Z(\eta)$ such that $|Z^*(\eta)| \ge n$ and
\begin{equation} \label{cap[X_n(beta<0)]}
X_n \cap \{\cup_{s \in Z^*(\eta)} J(s)\} \ = \ \varnothing.
\end{equation}
Consider the function $g^* \in \Sigma_r^d(k^*)$ defined by
\begin{equation} \label{[g^*Uab(2)]}
g^*
:= \
\lambda 2^{- (\alpha + \beta - 1/p)k^*_1} \sum_{s \in Z^*(\eta)} M_{k^*,s+r'/2},
\end{equation}
where $M_{k^*,s+r'/2}$ are B-splines of order $r'$. Since $|Z^*(\eta)| \asymp 2^{k^*_1}$, by \eqref{eq:StabIneq} we have
\begin{equation} \label{asymp[|g^*|_q(beta<0)]}
\|g^*\|_q
\ \asymp \
\lambda 2^{- (\alpha + \beta - 1/p + 1/q)k^*_1},
\end{equation}
and
\begin{equation} \nonumber
\|g^*\|_p
\ \asymp \
\lambda 2^{- (\alpha + \beta)k^*_1}.
\end{equation}
Hence, by Corollary  \ref{Corollary[B-inequality]} there is $\lambda > 0$ independent of $\eta$ and $n$ such that
$g^* \in \Uab$. Notice that $M_{k^*,s+m-1}(x), \ x \not\in J(s)$, for every $s \in Z^*(\eta)$, and consequently, by \eqref{cap[X_n(beta<0)]} $g^*(x^j) = 0, \ j =1,...,n$. From the inequality \eqref{[r_n(W, L_q)>]} 
\eqref{asymp[|g^*|_q(beta<0)]} and \eqref{def[eta(beta<0)]} we obtain
\begin{equation} \nonumber
r_n(\Uab, L_q) 
\ \gg \
\|g^*\|_q 
\ \asymp \
n^{- \alpha  - \beta + 1/p - 1/q}.
\end{equation}
This proves the lower bound of \eqref{asymp[r_nUab]} for the case $\beta < 0$.
\end{proof}

\begin{theorem} \nonumber
Let $\ 0 < p, \theta, q \le \infty$ and $a \in \RRdp$ satisfying the condition \eqref{condition[a]} 
 and
\begin{equation} \nonumber
1/p < a_1 < a_2 \le ... \le a_d < r. 
\end{equation}
 Assume that for a given $n \in \ZZ_+$, $\xi_n$ is the largest nonnegative number such that
\begin{equation} \label{[|G(Delta'(xi_n)|<]}
|G(\Delta'(\xi_n))| \ \le \ n.
\end{equation}  
Then $R_{\Delta(\xi_n)}$ defines an asymptotically optimal linear sampling algorithm for
$r_n:= r_n(\Uab, L_q(\IId))$ by
\begin{equation} \nonumber
R_{\Delta'(\xi_n)}(f) 
\ = \ 
L_n(X_n^*,\Phi_n^*,f)
\ = \ 
\sum_{(k,s) \in K(\Delta'(\xi_n))}  f(2^{-k}s) \psi_{k,s}, 
\end{equation} 
where $X_n^* := G(\Delta'(\xi_n))= \{ 2^{-k}s\}_{(k,s) \in K(\Delta'(\xi_n))}$, 
$\Phi_n^*:= \{ \psi_{k,s}\}_{(k,s) \in K(\Delta'(\xi_n))}$,
and we have the following asymptotic order
\begin{equation} \label{asymp[r_nUa]}
\sup_{f \in \Ua}\ \|f - R_{\Delta'(\xi_n)}(f) \|_q
\ \asymp \
r_n 
\ \asymp \
n^{- a_1 + (1/p - 1/q)_+}.
\end{equation}
\end{theorem}

\begin{proof} \\
{\em Upper bounds}.
For a given $n \in \ZZ_+$ (large enough), due to Lemma \ref{Lemma[SamplingNumberUa]} we can define $\xi = \xi_n$ as the largest nonnegative number such that
\begin{equation}  \nonumber
n 
\ \asymp \
2^{\xi_n/(a_1 - (1/p - 1/q)_+)}
\ \asymp \ 
|G(\Delta'(\xi_n))|
\ \le \ 
n.
\end{equation}
Hence, we find
\begin{equation} \label{asymp[2^xiUa]}
2^{-\xi_n}
\ \asymp \
n^{- a_1 + (1/p - 1/q)_+}.
\end{equation}
By Lemma \ref{Lemma:R_n=L_n} and \eqref{[|G(Delta'(xi_n)|<]} $R_{\Delta'(\xi_n)}$ is a linear sampling algorithm of the form 
\eqref{def[L_n]} and consequently, from  Theorem \ref{Ineq[UpperBoundUa]} we get
\begin{equation} \nonumber
r_n 
\ \le \
\sup_{f \in \Ua} \ \|f - R_{\Delta'(\xi_n)}(f)\|_q 
\ \ll \ 
2^{-\xi_n}.
\end{equation}
These relations together with \eqref{asymp[2^xiUa]} proves the upper bounds for \eqref{asymp[r_nUa]}.

\noindent
{\em Lower bounds}. As in the proof of Theorem \ref{Theorem[r_nUab]}, it is sufficient to prove the lower bound for the case $p \le q$.
Fix a number $r = 2^m$ with integer $m$ so that $r_d <  \min(r, r - 1 + 1/p)$.  In the next steps, the proof is similar to the proof of the lower bound for the case $\beta < 0$ in Theorem \ref{Theorem[r_nUab]}. Indeed, we can  repeat almost all the details in it with replacing $\alpha + \beta$ by $a_1$. 
\end{proof}

\medskip
\noindent
{\bf Remark} Concerning the asymptotically optimal sparse grids of sampling points $G(\Delta(\xi_n))$ and $G(\Delta'(\xi_n))$ for $r_n(\Uab, L_q) $ and 
$r_n(\Ua, L_q)$, it is worth to notice the following.  Let set $A$ and $B$ be the sets of  triples $(p,\theta,q)$ introduced in 
Section \ref{Sampling recovery}. For every triple $(p,\theta,q) \in A$, we can define the best choice of family of asymptotically optimal sparse grids $G(\Delta(\xi_n))$ and $G(\Delta'(\xi_n))$. Whereas, for a triple 
$(p,\theta,q) \in B$, there are many families of asymptotically optimal sparse grids $G(\Delta(\xi_n))$ and $G(\Delta'(\xi_n))$ depending on parameter 
$\varepsilon >0$, for $r_n(\Uab, L_q)$ and $r_n(\Ua, L_q)$, respectively. 
 Moreover, the parameter $\varepsilon >0$ plays a crucial role in the construction of asymptotically optimal sparse grids for $(p,\theta,q) \in B$. Indeed, to understand the substance let us consider, for instance, the problem of asymptotically optimal sparse grids for even the simplest case 
$r_n(U^{\alpha,\beta}_{2,2}, L_2))$ with $\beta <0$. Suppose that for this case instead the set
\begin{equation*} 
\Delta (\xi)
:= \
\{k \in \ZZdp: (\alpha  - \varepsilon)|k|_1 + (\beta + \varepsilon) |k|_\infty\le \xi\},
\end{equation*}
we take the set
\begin{equation*} 
\tilde {\Delta} (\xi)
:= \
\{k \in \ZZdp: \alpha|k|_1 + \beta |k|_\infty \le \xi\}.
\end{equation*}
Then  $\tilde {\Delta} (\xi)$ is a proper subset of $\Delta (\xi)$, i.e., the grid $G(\Delta(\xi))$ is essentially extended from $G(\tilde {\Delta} (\xi))$ by parameter $\varepsilon$. However, 
$|G(\Delta (\xi))|\asymp |G(\tilde {\Delta}(\xi))|$. On the other hand, the grid $G(\tilde {\Delta} (\xi))$ cannot be asymptotically optimal for  $r_n(U^{\alpha,\beta}_{2,2}, L_2)$ and 
because  for this gird \eqref{Ineq[UpperBoundUab]} is replaced by
\begin{equation}  \nonumber
\sup_{f \in U^{\alpha,\beta}_{2,2}} \ \|f - R_{\tilde{\Delta}(\xi)}(f)\|_2 
\ \asymp \ 
2^{-\xi} \xi^{d-1}.
\end{equation}
A similar optimal property of the grid $G(\Delta''(\xi))$ holds for  $r_n(U^{\alpha,\beta}_{2,2}, W^\gamma_2(\IId))$ with $\gamma > \beta$ (see the next section).

\section{Sampling recovery in energy norm } \label{Sampling recovery in Bg}

In this section, we extend the results on sampling recovery  in the quasi-norm of $L_q(\IId)$ of  functions from $\Bab$
 in Sections \ref{Sampling recovery} and \ref{Optimality}, to sampling recovery in the energy norm of the isotropic Sobolev space $W^\gamma_q(\IId)$ with $\gamma >0$ and $1<q<\infty$. We preliminarily study the sampling recovery in the norm of $\Bgq$, and then receive results on sampling recovery in the norm of $W^\gamma_q(\IId)$ as consequences of those in the norm of $B^{\gamma}_{q,\min(q,2)}$ and  the inequality \eqref{ineq[normsW,B]}. 
 
Put $\theta':= (1 - 1/\theta)^{-1}$ for $1 \le \theta \le \infty$.

\begin{lemma} \label{Lemma[|f - R_Delta(f)|_Bgq<]}
Let $0 < p, \theta \le \infty$, $1 \le q,\tau \le \infty$, $0 < \gamma < \min(r, r - 1 + 1/p)$ and  
$\psi: \ZZdp \to \RR_+$.  Then for every  $f \in \Bpsi$, we have 
\begin{equation} \label{eq[|f - R(f)|_{Bgq}]}
\|f - R_\Delta(f)\|_{\Bgq} 
\ \ll \ 
\|f\|_{\Bpsi}
\begin{cases}
\displaystyle
\sup_{k \in \ZZdp \setminus \Delta} 2^{-\psi(k) + \gamma |k|_\infty +(1/p - 1/q)_+ |k|_1}, \ & \theta \le 1, \\
\left(\sum_{k \in \ZZdp \setminus \Delta} \{2^{-\psi(k) + \gamma |k|_\infty +(1/p - 1/q)_+ |k|_1}\}^{\theta'}\right)^{1/\theta'}, \ & \theta > 1.
\end{cases}
\end{equation}
\end{lemma}

\begin{proof}
Let us prove the following Bernstein type inequality 
\begin{equation} \label{B-ineq}
 \|g\|_{\Bgq} 
\ \ll \ 2^{\gamma |k|_\infty}\|g\|_q, \ \forall g \in \Sigma^d_r(k),  \ \forall k \in \ZZdp,
\end{equation}
where we recall that $\Sigma^d_r(k)$ is the set of B-splines of the form \eqref{def:StabIneq}.
Let $g \in \Sigma^d_r(k)$ and  $k \in \ZZdp$.
Due to \eqref{NormB^beta(5)} it is sufficient to  show that
\begin{equation} \label{B-ineq(2)}
\biggl(\sum_{s \in \ZZ_+}\left\{2^{\gamma s} \omega_{r,i}(g,2^{-s})_q\right\}^\tau\biggl)^{1/\tau}  
\ \ll \ 
 2^{\gamma k_i}\|g\|_q
\end{equation}
with the usual change to sup when $\tau = \infty$.
Observe that $g$ can be represented by  the  series 
\eqref{representationby[Sigma_{r,i}^d(k)_p]} with
$g_s = g \in \Sigma^d_{r,i}(k_i)_q$ for $s = k_i$ and $g_s = 0$ for $s \not= k_i$.
Hence, applying Lemma \ref{Lemma[quasi-norm<]} we prove  \eqref{B-ineq(2)} and therefore, \eqref{B-ineq}.
The inequality \eqref{B-ineq} together with \eqref{eq:StabIneq} gives
\begin{equation}  \label{B-ineq(3)}
  \|g\|_{\Bgq} 
\ \ll \ 2^{\gamma |k|_\infty +(1/p - 1/q)_+ |k|_1}\|g\|_p, 
\ \forall g \in \Sigma^d_r(k),  \ \forall k \in \ZZdp.
\end{equation}
Since  $1 \le q,\tau \le \infty$, $\|\cdot\|_{\Bgq}$ is a norm.  Consequently, from the inclusions 
$q_k(f) \in \Sigma^d_r(k)$ and
\eqref{B-ineq(3)} we obtain for every $f \in \Bpsi$, 
\begin{equation*} 
\|f - R_\Delta(f)\|_{\Bgq} 
\  \le \ 
 \sum_{k \in \ZZdp \setminus \Delta} \| q_k(f)\|_{\Bgq}
\  \ll \ 
 \sum_{k \in \ZZdp \setminus \Delta} 2^{\gamma |k|_\infty +(1/p - 1/q)_+ |k|_1} \|q_k(f)\|_p.
\end{equation*}
By use of these inequalities, in a way similar to the proof of Lemma \ref{Lemma[|f - R_Delta(f)|_q<]} we prove the lemma.
\end{proof}

Let $0 < p, \theta \le \infty$, $1 \le q,\tau \le \infty$ and $\alpha, \gamma \in \RR_+$, $\beta \in \RR$ be given.
We fix a number $\varepsilon$ so that
\begin{equation*} 
0 < \varepsilon < \min (\alpha - (1/p - 1/q)_+, |\gamma - \beta|),
\end{equation*} 
 and define the set $\Delta^{''}(\xi)$ for $\xi > 0$ by
\begin{equation*} 
\Delta^{''}(\xi)
:= \
\begin{cases}
\{k \in \ZZdp: (\alpha - (1/p - 1/q)_+)|k|_1 - (\gamma - \beta)|k|_\infty \le \xi\}, \ & \theta \le 1, \\
\{k \in \ZZdp: (\alpha - (1/p - 1/q)_+ + \varepsilon/d)|k|_1 - (\gamma - \beta - \varepsilon) |k|_\infty\le \xi\},
 \ & \theta > 1, \ \beta > \gamma, \\
\{k \in \ZZdp: (\alpha - (1/p - 1/q)_+ - \varepsilon)|k|_1 - (\gamma - \beta  + \varepsilon) |k|_\infty\le \xi\},
 \ & \theta > 1, \ \beta < \gamma.
\end{cases}
\end{equation*}

The following theorem gives an upper bound of the error in the quasi-norm of $\Bgq$ of the sampling recovery by the linear sampling operator $R_{\Delta^{''}(\xi)}$ on the grids $G(\Delta^{''}(\xi))$ for 
$f \in \Uab$. It can be proven in a similar way to the proof of Theorem~\ref{Theorem[UpperBoundUab]} with a slight modification on the basis of Lemma~\ref{Lemma[|f - R_Delta(f)|_Bgq<]}. 
 
\begin{theorem} \label{Theorem[UpperBoundUabInBgq]}
Let $0 < p, \theta \le \infty$, $1 \le q,\tau \le \infty$, $\alpha, \gamma \in \RR_+$ and  
$\beta \in \RR$, $\beta \not= \gamma$, satisfy the conditions
\begin{equation} \nonumber
\alpha > 
\begin{cases}
(\gamma -  \beta)/d, \ & \beta > \gamma, \\
\gamma -  \beta, \ & \beta < \gamma,
\end{cases} 
\end{equation}
and
\begin{equation} \nonumber
 1/p < \min (\alpha,\alpha + \beta) \le  \max(\alpha,\alpha + \beta) < r, 
\quad  0 < \gamma < \min(r, r - 1 + 1/p). 
\end{equation}
Then we have the following upper bound
\begin{equation}  \label{ineq[|f - R(f)|_{Bgq}]}
\sup_{f \in \Uab} \ \|f - R_{\Delta^{''}(\xi)}(f)\|_{\Bgq} 
\ \ll \ 
2^{-\xi}.
\end{equation}
\end{theorem}

As the next step we need the asymptotic order of the cardinality of the grids $G(\Delta^{''}(\xi))$. It can be established in a similar way to Lemma~\ref{Lemma[SamplingNumber]} with a slight modification. More precisely, we have the following

\begin{lemma} \label{Lemma[SamplingNumber'']}
 Under the assumptions of Theorem \ref{Theorem[UpperBoundUabInBgq]} we have 
\begin{equation}  \label{asymp[|G(Delta^{''}|]}
|G(\Delta^{''}(\xi))|
\ \asymp \ 
\sum_{k \in \Delta^{''}(\xi)} 2^{|k|_1}
\ \asymp \ 
2^{\xi/ \nu},
\end{equation}
where 
\begin{equation} \nonumber
\nu 
:= \
\begin{cases}
\alpha  - (\gamma - \beta)/d - (1/p - 1/q)_+, \ &  \beta > \gamma, \\
\alpha  - (\gamma - \beta) - (1/p - 1/q)_+, \ &  \beta < \gamma.
\end{cases}
\end{equation}
\end{lemma}

From Theorem~\ref{Theorem[UpperBoundUabInBgq]} and Lemma~\ref{Lemma[SamplingNumber'']}, by analogous technique and argument as in the proof of Theorem~\ref{Theorem[r_nUab]} we prove the following

\begin{theorem} \label{Theorem[r_nUabInBgq]}
Under the assumptions of Theorem \ref{Theorem[UpperBoundUabInBgq]}, let for a given $n \in \ZZ_+$, $\xi_n$ be the largest nonnegative number such that
\begin{equation} \nonumber
|G(\Delta^{''}(\xi_n))| \ \le \ n.
\end{equation}  
Then $R_{\Delta^{''}(\xi_n)}$ defines an asymptotically optimal linear sampling algorithm for
$r_n := r_n(\Uab, \Bgq) $ by
\begin{equation} \nonumber
R_{\Delta^{''}(\xi_n)}(f) 
\ = \ 
L_n(X_n^*,\Phi_n^*,f)
\ = \ 
\sum_{(k,s) \in K(\Delta^{''}(\xi_n))}  f(2^{-k}s) \psi_{k,s}, 
\end{equation} 
where $X_n^* := G(\Delta^{''}(\xi_n))= \{ 2^{-k}s\}_{(k,s) \in K(\Delta^{''}(\xi_n))}$, 
$\Phi_n^*:= \{ \psi_{k,s}\}_{(k,s) \in K(\Delta^{''}(\xi_n))}$,
and we have the following asymptotic orders
\begin{equation} \label{asymp[r_nUabInBgq]}
\sup_{f \in \Uab}\ \|f - R_{\Delta^{''}(\xi_n)}(f) \|_{\Bgq}
\asymp \
r_n 
\ \asymp 
\begin{cases}
n^{- \alpha  - (\beta - \gamma)/d + (1/p - 1/q)_+},  &  \beta > \gamma, \\
n^{- \alpha  - \beta + \gamma + (1/p - 1/q)_+},  &  \beta < \gamma.
\end{cases}
\end{equation}
\end{theorem}

Notice that although in general, for a set $\Delta$ the upper bound  \eqref{eq[|f - R(f)|_{Bgq}]} depends on $\theta$ and $\tau$, 
for the special set $\Delta^{''}(\xi)$ the upper bound \eqref{ineq[|f - R(f)|_{Bgq}]} does not depend on 
$\theta$ and $\tau$, more precisely, these parameters go to a constant in $\ll$. Hence, the upper bound in 
\eqref{asymp[r_nUabInBgq]} which is derived from \eqref{ineq[|f - R(f)|_{Bgq}]} and \eqref{asymp[|G(Delta^{''}|]},
does not depend on $\theta$ and $\tau$ too.

From Theorem \ref{Theorem[r_nUabInBgq]} and the inequalities \eqref{ineq[normsW,B]} we derive following theorem on optimal sampling recovery in the energy norm of $ W^\gamma_q(\IId)$ of the class $\Uab$.

\begin{theorem} \nonumber
Under the assumptions of Theorem \ref{Theorem[UpperBoundUabInBgq]}, we have the following asymptotic orders for 
$1 < q < \infty$,
\begin{equation} \nonumber
r_n(\Uab, W^\gamma_q(\IId)) 
\ \asymp 
\begin{cases}
n^{- \alpha  - (\beta - \gamma)/d + (1/p - 1/q)_+},  &  \beta > \gamma, \\
n^{- \alpha  - \beta + \gamma + (1/p - 1/q)_+},  &  \beta < \gamma.
\end{cases}
\end{equation}
\end{theorem}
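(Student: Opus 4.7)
The plan is to reduce Theorem \ref{Theorem[r_nW]} directly to Theorem \ref{Theorem[r_nUabInBgq]} via the classical embedding chain between the isotropic Sobolev space and the isotropic Besov spaces. For $1 < q < \infty$, Littlewood--Paley theory identifies $W^\gamma_q(\IId)$ with the Triebel--Lizorkin space $F^\gamma_{q,2}(\IId)$ (after a standard extension/restriction argument for the domain $\IId$), and this yields the two-sided continuous embedding
\begin{equation*}
B^\gamma_{q,\min(q,2)}(\IId) \ \hookrightarrow \ W^\gamma_q(\IId) \ \hookrightarrow \ B^\gamma_{q,\max(q,2)}(\IId),
\end{equation*}
i.e., there exist constants $c,C>0$ such that $c\|f\|_{B^\gamma_{q,\max(q,2)}} \le \|f\|_{W^\gamma_q} \le C\|f\|_{B^\gamma_{q,\min(q,2)}}$ for all admissible $f$.

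For the upper bound, I use the left embedding. For every linear sampling algorithm $L_n(X_n,\Phi_n,\cdot)$ and every $f \in \Uab$,
\begin{equation*}
\|f - L_n(X_n,\Phi_n,f)\|_{W^\gamma_q(\IId)}
\ \le \
C\,\|f - L_n(X_n,\Phi_n,f)\|_{B^\gamma_{q,\min(q,2)}(\IId)}.
\end{equation*}
Taking the supremum over $\Uab$ and then the infimum over $(X_n,\Phi_n)$ gives
$r_n(\Uab, W^\gamma_q(\IId)) \le C\, r_n(\Uab, B^\gamma_{q,\min(q,2)}(\IId))$, and the same argument works for the nonlinear quantity $\varrho_n$. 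Applying Theorem \ref{Theorem[r_nUabInBgq]} with the specific choice $\tau = \min(q,2)$ (whose hypotheses on $\alpha,\beta,\gamma,p$ are inherited from the current statement) supplies the desired upper bound; the asymptotically optimal algorithm is exactly $R_{\Delta^{''}(\xi_n)}$ constructed there.

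For the lower bound, I use the right embedding to obtain
\begin{equation*}
\varrho_n(\Uab, W^\gamma_q(\IId))
\ \ge \
c\,\varrho_n(\Uab, B^\gamma_{q,\max(q,2)}(\IId)),
\end{equation*}
and likewise for $r_n$. Theorem \ref{Theorem[r_nUabInBgq]}, now invoked with $\tau = \max(q,2)$, yields the matching lower rate. The crucial point that makes the sandwich collapse to an asymptotic equivalence is that the rate in Theorem \ref{Theorem[r_nUabInBgq]} is independent of the third Besov index $\tau$: both endpoints of the embedding chain produce the very same exponents $-\alpha-(\beta-\gamma)/d+(1/p-1/q)_+$ (case $\beta>\gamma$) or $-\alpha-\beta+\gamma+(1/p-1/q)_+$ (case $\beta<\gamma$).

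The main obstacle, such as it is, is purely bookkeeping: one must verify that the embedding $B^\gamma_{q,\min(q,2)}(\IId) \hookrightarrow W^\gamma_q(\IId) \hookrightarrow B^\gamma_{q,\max(q,2)}(\IId)$ holds on the bounded domain $\IId$ with the definition of $\Bgq$ used in Section \ref{Quasi-interpolant} (given by mixed moduli of smoothness, which reduce to the classical isotropic Besov quasi-norm when $\Omega(t)=t^\gamma$ along any single axis). This is standard and can either be quoted from Triebel's monographs or verified directly by a bounded linear extension operator to $\RRd$, followed by restriction. Once the embedding is in place, the theorem is an immediate consequence of Theorem \ref{Theorem[r_nUabInBgq]}.
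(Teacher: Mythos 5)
Your argument is correct and is essentially the paper's own proof: the paper likewise reduces the statement to Theorem \ref{Theorem[r_nUabInBgq]} via the Littlewood--Paley embedding of $W^\gamma_q(\IId)$ into the Besov scale, relying on the fact that the rate there does not depend on the third index $\tau$ (you are merely more explicit about the lower bound, which the paper handles through the same sandwich but states only tersely). Note that the paper writes the embedding with the index $\min(p,2)$, which appears to be a typo for the classical $\min(q,2)$ that you correctly use.
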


\medskip
\noindent
{\bf Remark} Asymptotically optimal linear sampling algorithms  for $r_n(\Uab, W^\gamma_q(\IId))$ 
are  the same as for  $r_n(\Uab, B^\gamma_{q,\min(q,2)})$. For periodic functions, the asymptotic order of 
$r_n(U^{\alpha,\beta}_{2,2}, W^\gamma_2(\TTd))$ with $\beta < \gamma$ recently has been obtained in \cite{BDSU14}.

\section{Optimal cubature} \label{Numerical integration}

Every linear sampling algorithm $L_n(X_n,\Phi_n,\cdot)$ of the form \eqref{def[L_n]} generates  
the cubature formula $I_n(X_n,\Lambda_n,f)$ where 
\begin{equation*}
\Lambda_n = \{\lambda_j \}_{j =1}^n, \quad \lambda_j = \int_{\IId} \varphi_j(x) \ dx.
\end{equation*} 
Hence, it is easy to see that
\begin{equation*} 
 |I(f) - I_n(X_n,\Lambda_n,f)|
 \ \le \
 \|f - L_n(X_n,\Phi_n,f)\|_1, 
\end{equation*}
and consequently, from the definitions we have the following inequality 
\begin{equation} \label{[i_n<r_n]}
i_n(W) 
\ \le \ 
r_n(W)_1. 
\end{equation}

\begin{theorem} \nonumber
Let $\ 0 < p, \theta \le \infty$ and $\alpha \in \RR_+$, $\beta \in \RR$ such that
\begin{equation} \nonumber
1/p < \min (\alpha,\alpha + \beta) \le  \max(\alpha,\alpha + \beta) < r. 
\end{equation}
Assume that for a given $n \in \ZZ_+$, $\xi_n$ is the largest nonnegative number such that
\begin{equation} \nonumber
|G(\Delta(\xi_n))| \ \le \ n.
\end{equation}  
Then $R_{\Delta(\xi_n)}$ defines an asymptotically optimal cubature formula for
$i_n(\Uab)$ by
\begin{equation}  \nonumber
I_n(X_n^*,\Phi_n^*,f)
\ = \ 
\sum_{(k,s) \in K(\Delta(\xi_n))}  \lambda_{k,s}f(2^{-k}s), 
\end{equation}
where
\begin{equation} \nonumber
X_n^* := G(\Delta(\xi_n))= \{ 2^{-k}s\}_{(k,s) \in K(\Delta(\xi_n))}, \quad
\Lambda_n^*:= \{\lambda_{k,s}\}_{(k,s) \in K(\Delta(\xi_n))} \quad 
\lambda_{(k,s} := \ \int_{\IId}\psi_{k,s}(x) dx,
\end{equation} 
and we have the following asymptotic orders
\begin{equation} \label{asymp[i_nUab]}
\sup_{f \in \Uab}\ |I(f) - I_n(X_n^*,\Lambda_n^*,f)|
\ \asymp \
i_n(\Uab) 
\ \asymp \
\begin{cases}
n^{- \alpha  - \beta/d + (1/p - 1)_+}, \ &  \beta > 0, \\
n^{- \alpha  - \beta + (1/p - 1)_+}, \ &  \beta < 0.
\end{cases}
\end{equation}
\end{theorem}

\begin{proof}
The upper bound of \eqref{asymp[i_nUab]} follows from \eqref{[i_n<r_n]} and Theorem \ref{Theorem[r_nUab]}.
To prove the lower bound of \eqref{asymp[i_nUab]} we observe that
\begin{equation} \nonumber
i_n(W) 
\ \ge \
\inf_{X_n=\{x^j\}_{j=1}^n \subset \IId} \ \ \sup_{f \in W: \ f(x^j) = 0, \ j=1,...,n} \ |I(f)|, 
\end{equation}
and for the functions $g^*$ given in \eqref{[g^*Uab(1)]} and \eqref{[g^*Uab(2)]} we have
$I(g^*)  = \|g^*\|_1$.
Hence, we can see that the lower bound is derived from the proof of the lower bound of Theorem \ref{Theorem[r_nUab]}.
\end{proof}

In a similar way, we can prove the following

\begin{theorem} \nonumber
Let $\ 0 < p, \theta \le \infty$ and $a \in \RRdp$ satisfying the condition \eqref{condition[a]} and
$a_1 > 1/p$. 
Assume that for a given $n \in \ZZ_+$, $\xi_n$ is the largest nonnegative number such that
\begin{equation} \nonumber
|G(\Delta'(\xi_n))| \ \le \ n.
\end{equation}  
Then $R_{\Delta'(\xi_n)}$ defines an asymptotically optimal cubature formula for
$i_n(\Ua)$ by
\begin{equation}  \nonumber
I_n(X_n^*,\Phi_n^*,f)
\ = \ 
\sum_{(k,s) \in K(\Delta'(\xi_n))}  \lambda_{k,s}f(2^{-k}s), 
\end{equation}
where
\begin{equation} \nonumber
X_n^* := G(\Delta'(\xi_n))= \{ 2^{-k}s\}_{(k,s) \in K(\Delta'(\xi_n))}, \quad
\Lambda_n^*:= \{\lambda_{k,s}\}_{(k,s) \in K(\Delta'(\xi_n))} \quad 
\lambda_{k,s} := \ \int_{\IId}\psi_{k,s}(x) dx,
\end{equation} 
and we have the following asymptotic order
\begin{equation} \nonumber
\sup_{f \in \Ua}\ |I(f) - I_n(\Lambda_n^*,X_n^*,f)|
\ \asymp \
i_n(\Ua) 
\ \asymp \
n^{- a_1 + (1/p - 1)_+}.
\end{equation}
\end{theorem}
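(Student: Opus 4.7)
The plan is to mirror the proof of Theorem \ref{Theorem[i_nUab]}, which is exactly what the paper hints at with ``In a similar way, we can prove the following''. The argument splits cleanly into an upper bound and a matching lower bound.

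For the upper bound I would combine the trivial inequality \eqref{[i_n<r_n]}, namely $i_n(\Ua) \le r_n(\Ua, L_1(\IId))$, with Theorem \ref{Theorem[r_nUa]} specialized to $q = 1$, which gives $r_n(\Ua, L_1(\IId)) \asymp n^{-a_1 + (1/p - 1)_+}$. By Lemma \ref{Lemma:R_m=L_n} the optimal operator $R_{\Delta'(\xi_n)}$ has the linear sampling form $\sum_{(k,s) \in K(\Delta'(\xi_n))} f(2^{-k}s)\, \psi_{k,s}$. Integrating this identity termwise over $\IId$ produces precisely the cubature rule $I_n(X_n^*, \Lambda_n^*, \cdot)$ in the statement, with weights $\lambda_{k,s} = \int_{\IId} \psi_{k,s}(x)\,dx$, and the cubature error is controlled by $\|f - R_{\Delta'(\xi_n)}(f)\|_1$, which Theorem \ref{Theorem[r_nUa]} already bounds by the desired rate.

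For the lower bound I would start from the standard sampling obstruction
\[
i_n(\Ua) \ \ge \ \inf_{X_n \subset \IId,\ |X_n| = n}\ \sup_{f \in \Ua,\ f(x^j) = 0\ \forall j}\ |I(f)|,
\]
which holds because any $n$-node cubature formula returns $0$ on a function vanishing at all its nodes. Given an arbitrary $n$-point set $X_n$, I would reuse the bump function $g^*$ constructed in the lower-bound portion of the proof of Theorem \ref{Theorem[r_nUa]} (inherited from \eqref{[g^*Uab(2)]} with $\alpha + \beta$ replaced by $a_1$), at the anisotropic level $k^* = (\eta, m, \ldots, m)$ with $\eta$ chosen so that $|Z(\eta)| \asymp 2^\eta \ge 2n$. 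The pigeonhole selection of $Z^*(\eta) \subset Z(\eta)$ already performed there guarantees both $|Z^*(\eta)| \ge n$ and $g^*(x^j) = 0$ on the given $X_n$, while Corollary \ref{Corollary[B-inequality]} keeps $g^* \in \Ua$ for a fixed $\lambda$ independent of $\eta$. The decisive observation, which is what allows the $L_1$-lower bound to transfer to integration, is that $g^*$ is a positive linear combination of non-negative centered B-splines, so $g^* \ge 0$ on $\IId$ and consequently $I(g^*) = \int_{\IId} g^* = \|g^*\|_1$.

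Thus the $L_1$-estimate $\|g^*\|_1 \asymp n^{-a_1 + (1/p - 1)_+}$ already carried out in Theorem \ref{Theorem[r_nUa]} for this very $g^*$ (with $q = 1$ substituted) transports unchanged to $|I(g^*)|$, and after taking the infimum over $X_n$ yields the matching lower bound. The main obstacle I expect is merely bookkeeping: one must check that the reduction ``$p \ge q$ to $p \le q$'' used inside the proof of Theorem \ref{Theorem[r_nUa]} survives passage to the integration functional. It does, because the non-negativity of $g^*$ is independent of which target norm one uses, so a single construction covers both regimes $p < 1$ and $p \ge 1$, and the exponent $(1/p - 1)_+$ emerges exactly as in the sampling-recovery proof with $q = 1$.
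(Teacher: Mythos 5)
Your proposal is correct and follows essentially the same route as the paper: the upper bound via the inequality \eqref{[i_n<r_n]} combined with Theorem \ref{Theorem[r_nUa]} at $q=1$, and the lower bound via the obstruction $i_n(W) \ge \inf_{X_n}\sup\{|I(f)|: f \in W,\ f|_{X_n}=0\}$ together with the observation that the nonnegative fooling function $g^*$ (the analogue of \eqref{[g^*Uab(2)]} with $\alpha+\beta$ replaced by $a_1$) satisfies $I(g^*)=\|g^*\|_1$. This is exactly how the paper proves Theorem \ref{Theorem[i_nUab]}, to which it reduces the present statement by the phrase ``in a similar way.''
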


\noindent
{\bf Acknowledgments.}  This work is funded by Vietnam National Foundation for Science and Technology Development (NAFOSTED) under  Grant No. 102.01-2014.02. A part of this work was done when the author was working as a research professor at the Vietnam Institute for Advanced Study in Mathematics (VIASM). He  would like to thank  the VIASM  for providing a fruitful research environment and working condition. The author would like to specially thank Dr. Tino Ullrich and Glenn Byrenheid for their valuable remarks and suggestions. He thanks the referees for constructive remarks, comments and suggestions which certainly improved the presentation of the paper.

\end{document}